\newcounter{ENUM}
\newcommand{\itm}{\item}
\newenvironment{ilist}{\renewcommand{\theENUM}{\roman{ENUM}}\renewcommand{\itm}{\addtocounter{ENUM}{1}\item[(\theENUM)]}\begin{itemize}\setcounter{ENUM}{0}}{\end{itemize}}
\newcommand{\margh}[1]{}
\def\risom{\overset{\sim}{\rightarrow}}
\def\cG{{\mathcal G}}
\def\cAG{{\mathcal A}{\mathcal G}}
\def\cU{{\mathcal U}}
\def\cM{{\mathcal M}}
\def\cX{{\mathcal X}}
\def\cY{{\mathcal Y}}
\def\cZ{{\mathcal Z}}
\def\sE{{\mathscr E}}
\def\sF{{\mathscr F}}
\def\sG{{\mathscr G}}
\def\sK{{\mathscr K}}
\def\sL{{\mathscr L}}
\def\sM{{\mathscr M}}
\def\sO{{\mathscr O}}
\def\sV{{\mathscr V}}
\def\vp{\varphi}
\def\ch{\operatorname{char}}
\def\Hom{\operatorname{Hom}}
\def\Spec{\operatorname{Spec}}
\def\Pic{\operatorname{Pic}}
\def\res{\operatorname{res}}
\def\id{\operatorname{id}}
\def\codim{\operatorname{codim}}
\def\spn{\operatorname{span}}
\newtheorem{thm}{Theorem}[section]
\newtheorem{prop}[thm]{Proposition}
\newtheorem{lem}[thm]{Lemma}
\newtheorem{cor}[thm]{Corollary}
\theoremstyle{definition}
\newtheorem{defn}[thm]{Definition}
\newtheorem{ex}[thm]{Example}
\theoremstyle{remark}
\newtheorem{notn}[thm]{Notation}
\newtheorem{rem}[thm]{Remark}
\numberwithin{equation}{section}
\numberwithin{figure}{section}
\begin{document}
\title{Brill-Noether loci with fixed determinant in rank $2$}
\author{Brian Osserman}
\begin{abstract} In the 1990's, Bertram, Feinberg and Mukai examined
Brill-Noether loci for vector bundles of rank $2$ with fixed canonical
determinant, noting that the dimension was always bigger in this case
than the naive expectation. We generalize their results to treat a much
broader range of fixed-determinant Brill-Noether loci. The main technique
is a careful study of symplectic Grassmannians and related concepts.
\end{abstract}

\thanks{This work was primarily carried out during the 2009 special 
semester on Algebraic Geometry at MSRI, Berkeley.}
\maketitle

\section{Introduction}

Classical Brill-Noether theory studies the space of line bundles of
a given degree with a given number of global sections on a general 
smooth projective curve of given genus. The basic questions on 
non-emptiness, dimension, and smoothness were finally settled by
work of Kempf, Kleiman-Laksov, Griffiths-Harris, and Gieseker concluding
in the early 1980's.
The natural generalization of these questions to higher-rank vector
bundles remains very much open, even in the case of rank $2$, despite 
much progress due to many people (see \cite{g-t1} for a survey).
Complications (in the form of components of larger than the expected
dimensions) are known to occur due to instability of the underlying
bundle, and due to symmetries introduced by special determinants.
The former is easily avoided by restricting to stable or semi-stable
loci, but the latter seems to be a fundamental phenomenon. Our aim is to 
study this issue systematically, and our results are substantial 
generalizations of the original work of Bertram and Feinberg \cite{b-f2} 
and Mukai \cite{mu2} which pioneered the subject by studying Brill-Noether 
loci for fixed canonical determinant.

If $C$ is a smooth projective curve of genus $g$, and we are given $r,d,k$,
the space $\cG_C(r,d,k)$ parametrizing pairs of a vector bundle of rank 
$r$ and degree $d$, with a $k$-dimensional space of global sections,
has an expected dimension 
$$\rho(r,d,k,g)=r^2(g-1)+1-k(k-d+r(g-1)),$$ 
directly generalizing the classical case $r=1$. However, for $r>1$ the space 
turns out to be quite difficult to describe. Unlike the case $r=1$, even for 
a general curve $C$ it can be empty when $\rho(r,d,k,g) \geq 0$ (with 
suitable stability restrictions), non-empty when $\rho(r,d ,k,g) < 0$, and 
can have multiple components of distinct dimensions.

One case which appears to
be better behaved is the case of rank $2$ with canonical determinant.
This was studied independently by Bertram and Feinberg \cite{b-f2}
and Mukai \cite{mu2}, who both showed that in fact every component of 
the space $\cG(2,\omega,k)$ of pairs with canonical determinant has 
dimension at least
$$\rho_{\omega}(k,g):=3g-3-\binom{k+1}{2}=\rho(2,2g-2,k,g)-g+\binom{k}{2}.$$
They also conjectured that this locus had good behavior analogous to the
classical rank-$1$ setting.

Our main results are the following:

\begin{thm}\label{thm:exp-dim-twist} Let $C$ be a smooth projective curve of 
genus $g$, and $\sL$ a line bundle of degree $d$ on $C$. Let $\delta \geq 0$ be 
the smallest degree of an effective divisor $\Delta$ such that 
$h^1(C,\sL(-\Delta))\geq 1$. Then every irreducible 
component of the stack $\cG(2,\sL,k)$ of pairs with determinant $\sL$ has 
dimension at least
$$\rho^1_{\sL}(k,g):=\rho(2,d,k,g)-g+\binom{k-\delta}{2}.$$

Moreover, every pair $(\sE,V)$ corresponding to a point of $\cG(2,\sL,k)$
satisfies the condition 
$$\dim (V \cap H^0(C,\sE(-\Delta))) \geq k-\delta.$$
\end{thm}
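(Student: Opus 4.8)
The plan is to realize $\cG(2,\sL,k)$ as a locus cut out inside a larger parameter space by a symplectic-type condition, and then to exploit the extra symmetry this condition carries. Fix an effective divisor $\Delta$ of degree $\delta$ with $h^1(C,\sL(-\Delta))\geq 1$; equivalently, by Serre duality, $\Hom(\sL(-\Delta),\omega)\neq 0$, so there is a nonzero map, hence an inclusion of sheaves $\sL(-\Delta)\hookrightarrow \omega$ away from finitely many points. For a pair $(\sE,V)$ with $\det\sE\cong\sL$, the wedge pairing $\sE\otimes\sE\to\det\sE\cong\sL$ is alternating, and composing with $\sL\to\sL(\Delta')$ for an appropriate effective $\Delta'$ lets us compare $\sE$ with its twisted dual. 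The first step is to make this precise: I would show that the condition ``$\det\sE\cong\sL$'' forces a skew-symmetric perfect pairing $\sE\times\sE\to\sL$, and that this in turn makes the $k$-dimensional space $V\subset H^0(\sE)$ pair with $H^1(\sE^\vee\otimes\sL)=H^1(\sE)$ (using $\sE^\vee\cong\sE\otimes\sL^{-1}$) in a way governed by the cup product and the alternating structure.

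The heart of the argument is a dimension count for the Brill-Noether stack done via deformation theory, keeping track of the fixed-determinant constraint. The tangent/obstruction analysis for $\cG(2,\sL,k)$ at $(\sE,V)$ involves $H^0$ and $H^1$ of $\ad^0\sE$ (trace-zero endomorphisms, since the determinant is fixed) together with the Petri-type map $V\otimes H^0(\sE^\vee\otimes\sL\otimes\omega)\to H^0(\ad^0\sE\otimes\omega)$ measuring how sections move. The naive expected dimension is $\rho(2,d,k,g)$; the point is that the alternating pairing identifies $\ad^0\sE$ with a self-dual (symplectic) bundle, so the relevant cup-product / Petri map factors through, or is constrained by, a symmetric pairing on $V$ itself. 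Concretely, the composite $V\times V\to H^0(\sL)\to H^0(\sL(-\Delta))^\vee$-type pairing — better, the pairing valued in $H^1(\omega^{-1}\otimes\sL)$ after Serre duality — is symmetric, and a symmetric bilinear form on a $k$-dimensional space takes values in a space of dimension $\binom{k+1}{2}$ rather than $k^2$. This is exactly the source of the correction term: the expected codimension of the locus drops by the difference, and the twist by $\Delta$ (degree $\delta$) shifts $k$ to $k-\delta$ because the pairing becomes degenerate on the subspace $V\cap H^0(\sE(-\Delta))$, which has corank at most $\delta$. Carrying out this bookkeeping — comparing $\rho(2,d,k,g)$ with the symmetrized count and extracting precisely $-g+\binom{k-\delta}{2}$ — is where I would spend the most care.

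For the final assertion, $\dim\bigl(V\cap H^0(C,\sE(-\Delta))\bigr)\geq k-\delta$, I would argue as follows. The inclusion $\sL(-\Delta)\hookrightarrow\sL$ combined with the perfect alternating pairing $\sE\otimes\sE\to\sL$ identifies $\sE(-\Delta)$ with a subsheaf of $\sE$ that is the annihilator, under the pairing, of $\sE$ twisted down by $\Delta$ — more usefully, it shows that a section $s\in V$ lies in $H^0(\sE(-\Delta))$ precisely when its pairing against all of $V$ lands in $H^0(\sL(-\Delta))\subset H^0(\sL)$, i.e.\ is killed by the composite $H^0(\sL)\to H^0(\sL)/H^0(\sL(-\Delta))$, a space of dimension $\leq\delta$. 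Thus $V\cap H^0(\sE(-\Delta))$ is the radical of the induced pairing $V\times V\to H^0(\sL)/H^0(\sL(-\Delta))$ composed with a skew form, and since this target has dimension at most $\delta$, an alternating form landing in it on a $k$-dimensional space has radical of dimension at least $k-\delta$ (an alternating form valued in a $\delta$-dimensional space has rank at most $2\lfloor \delta/2\rfloor\cdot(\text{something})$; more simply its kernel has codimension at most $\delta$ since the form factors through a map $V\to \Hom(V, W)$ with $\dim W\le\delta$, hence $V\to W^{\oplus k}$ cannot be injective once... ) — the clean statement is that the pairing factors through $V\to \Hom(V,W)$ whose image has dimension $\le \dim W\cdot(\text{rank considerations})$, forcing a kernel of dimension $\ge k-\delta$. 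I would present this last step as the linear-algebra lemma: an alternating (or merely bilinear) form $\beta\colon V\times V\to W$ with $\dim W=\delta$ has a totally isotropic — indeed radical — subspace of dimension $\ge \dim V-\delta$, because the associated map $V\to V^\vee\otimes W$ has image of dimension $\le \delta\dim V$... no: the sharp input is that $v\mapsto\beta(v,-)\in\Hom(V,W)$ has kernel of dimension $\geq \dim V-\dim\Hom(V,W)$ is too weak, so instead one uses that $\beta(v,-)=0$ is $\le\delta$ linear conditions after fixing a complement — and this is precisely what the geometry supplies.

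**Main obstacle.** The principal difficulty is \emph{not} the linear algebra of symmetric/alternating forms but rather setting up the deformation theory of $\cG(2,\sL,k)$ correctly so that the symplectic structure on $\ad^0\sE$ is visible in the obstruction space, and then proving the \emph{lower} bound on dimension of every component — lower bounds require showing the locus is cut out by at most a controlled number of equations (i.e.\ that the obstruction map has image in a space of the asserted dimension), which forces one to identify the obstruction space with a symmetric square rather than a full tensor product. I expect this to rely on a careful local model via symplectic Grassmannians, as the abstract advertises; the twisting by $\Delta$ and the non-perfect quotient $H^0(\sL)/H^0(\sL(-\Delta))$ introduce degeneracies that must be tracked uniformly across all of $\cG(2,\sL,k)$, and handling components where $\delta$ jumps or where $\sE$ is non-split will be the technical crux.
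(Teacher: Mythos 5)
Your proposal takes a genuinely different route from the paper. You sketch a deformation-theoretic argument centered on the tangent/obstruction theory of $\cG(2,\sL,k)$, $\ad^0\sE$, and Petri-type cup-product maps, hoping that the symplectic structure will become visible in the obstruction space; you yourself flag setting this up as the principal difficulty and do not carry it out. The paper avoids deformation theory entirely: over each finite-type open $\cU\subseteq\cM(2,\sL)$ it builds an explicit auxiliary bundle $\widetilde{\sE}=p_{2*}(\sE_{\cU}(D')/\sE_{\cU}(-D'-\Delta))$ of rank $4\deg D+2\delta$, equips it with a symplectic form via residues using a chosen $\vp\colon\sL\to\omega(\Delta)$ (Lemma \ref{lem:construct-form}), and identifies $\cG_{\cU}(2,\sL,k)$ with the locus inside the Grassmannian bundle $\cG(k,\widetilde{\sE})$ where the universal $k$-plane lies inside both of two specific subbundles: $p_{2*}\sE_{\cU}(D')$, which is isotropic, and $p_{2*}(\sE_{\cU}/\sE_{\cU}(-D'-\Delta))$, which has uniform degeneracy of rank $2\deg D$. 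The dimension bound then drops out of the structure theory of alternating Grassmannians (Proposition \ref{prop:alt-grass}, Lemma \ref{lem:pullback-codim}, Corollary \ref{cor:iso-intersect}). This is a concrete codimension count in a Grassmannian rather than an obstruction-theoretic estimate, and it gives a lower bound on \emph{every} component without controlling any Petri map --- precisely the step your sketch leaves open.

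Separately, your argument for the second assertion has a genuine error. You claim $V\cap H^0(C,\sE(-\Delta))$ equals the radical of the induced pairing $V\times V\to H^0(\sL)/H^0(\sL(-\Delta))$, but the inclusion ``radical $\subseteq V\cap H^0(\sE(-\Delta))$'' fails: a section $s$ can satisfy $s\wedge t\in H^0(\sL(-\Delta))$ for all $t\in V$ without itself vanishing along $\Delta$ (e.g.\ whenever $V$ does not generate $\sE$ along $\Delta$). Moreover, the linear-algebra lemma you reach for --- that an alternating $\beta\colon V\times V\to W$ with $\dim W=\delta$ has radical of dimension at least $\dim V-\delta$ --- is false: take $V=F^{2\delta}$, $W=F^{\delta}$, and $\beta$ assembled from $\delta$ generic symplectic forms; the radical is already zero for a single such form, while $\dim V-\delta=\delta>0$. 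Your own hedging in that paragraph reflects that this step does not close. The paper instead deduces the inequality from the last clause of Proposition \ref{prop:alt-grass}: inside $\sF=p_{2*}(\sE_{\cU}/\sE_{\cU}(-D'-\Delta))$ the form has kernel $\sK=p_{2*}(\sE_{\cU}(-\Delta)/\sE_{\cU}(-D'-\Delta))$ of corank $2\delta$, and an isotropic $k$-plane $V\subseteq\sF$ projects to an isotropic subspace of the $2\delta$-dimensional nondegenerate quotient $\sF/\sK$, hence $\dim V-\dim(V\cap\sK)\leq\delta$, i.e.\ $\dim(V\cap\sK)\geq k-\delta$.
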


See \S \ref{sec:prelims} for more details on definitions and notation.
Note that this estimate only differs from the usual one if $k \geq \delta+2$,
and the Bertram-Feinberg-Mukai result is simply the case of 
Theorem \ref{thm:exp-dim-twist} where $\sL=\omega$ (and therefore $\delta=0$).
More generally, we have:

\begin{cor} Let $C$ be a smooth projective curve of genus 
$g$, and $\sL$ a line bundle of degree $d$ on $C$, and suppose that
$h^1(C,\sL) \geq 1$. Then every irreducible component of $\cG(2,\sL,k)$ has 
dimension at least
$$\rho^1_{\sL}(k,g)=\rho(2,d,k,g)-g+\binom{k}{2}.$$
\end{cor}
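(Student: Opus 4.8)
The plan is to deduce the Corollary directly from Theorem~\ref{thm:exp-dim-twist} by analyzing the invariant $\delta$ in the special case where the hypothesis $h^1(C,\sL)\geq 1$ holds. Recall that $\delta\geq 0$ is defined as the smallest degree of an effective divisor $\Delta$ with $h^1(C,\sL(-\Delta))\geq 1$. The key observation is simply that if $h^1(C,\sL)\geq 1$, then we may take $\Delta=0$, which is effective of degree $0$; hence $\delta=0$. First I would note that this is essentially the only thing to check: there is no effective divisor of negative degree, so $\delta$ cannot be smaller than $0$, and $\Delta=0$ witnesses that $\delta\leq 0$.

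Given $\delta=0$, I would then simply substitute into the conclusion of Theorem~\ref{thm:exp-dim-twist}. The dimension bound becomes
$$\rho^1_{\sL}(k,g)=\rho(2,d,k,g)-g+\binom{k-\delta}{2}=\rho(2,d,k,g)-g+\binom{k}{2},$$
which is exactly the asserted lower bound for the dimension of every irreducible component of $\cG(2,\sL,k)$. This matches the displayed formula in the Corollary statement, so nothing further is needed for the dimension claim.

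Since this is a direct corollary, there is no substantive obstacle: the entire content has been absorbed into Theorem~\ref{thm:exp-dim-twist}, and the only ``work'' is the trivial identification $\delta=0$ under the stated hypothesis. The one point worth remarking on is that this recovers the Bertram--Feinberg--Mukai estimate $\rho_\omega(k,g)=3g-3-\binom{k+1}{2}$ when one further specializes to $\sL=\omega$ and $d=2g-2$, since then $\rho(2,2g-2,k,g)-g+\binom{k}{2}=3g-3-\binom{k+1}{2}$ by the identity already recorded in the introduction; this serves as a consistency check rather than a step in the proof. I would therefore present the proof as a two-line remark: observe $\delta=0$, then invoke Theorem~\ref{thm:exp-dim-twist}.
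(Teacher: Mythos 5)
Your proposal is correct and matches the paper's (implicit) argument exactly: under the hypothesis $h^1(C,\sL)\geq 1$ the divisor $\Delta=0$ witnesses $\delta=0$, and substituting $\delta=0$ into Theorem~\ref{thm:exp-dim-twist} gives the stated bound $\rho(2,d,k,g)-g+\binom{k}{2}$. Nothing further is needed.
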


The case that $\delta>0$ is also interesting, as it yields examples where
the dimension is larger than expected even in degree strictly greater than 
$2g-2$. This is perhaps not surprising if one thinks in terms of 
constructions via extensions, but it underlines the point that the sense
in which Brill-Noether theory in rank $2$ will depend on ``rank-$1$
phenomena'' goes beyond cases like canonical determinant, when the
determinant is special in the Brill-Noether sense, to include at least
some cases where the determinant is not special, but can be expressed as 
the tensor product of two special line bundles.

In the case $\delta=0$, we are further able to show that we have the 
beginning of a possible trend:

\begin{thm}\label{thm:exp-dim-double} Let $C$ be a smooth projective curve of 
genus $g$, and $\sL$ a line bundle of degree $d$ on $C$. Suppose 
that $h^1(C,\sL) \geq 2$. Then every irreducible 
component of $\cG(2,\sL,k)$ has dimension at least
$$\rho^2_{\sL}(k,g):=\rho(2,d,k,g)-g+2\binom{k}{2}.$$
\end{thm}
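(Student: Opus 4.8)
The plan is to run the argument behind Theorem~\ref{thm:exp-dim-twist} in the case $\delta=0$ --- which realizes $\cG(2,\sL,k)$, locally over the moduli stack of rank-$2$ bundles with determinant $\sL$, as a degeneracy locus of ``symplectic'' type whose dimension therefore exceeds the naive expectation $\rho(2,d,k,g)-g$ by $\binom{k}{2}$ --- but now fed with the \emph{two}-dimensional space of alternating forms coming from $h^1(C,\sL)\geq 2$, so that the excess is doubled to $2\binom{k}{2}$. Concretely: over the moduli stack $M$ of such bundles ($\dim M=3g-3$) one resolves the relevant direct image by a two-term complex $[K^0\to K^1]$ of locally free sheaves with $h^0(C,\sE)=\dim\ker$ and $h^1(C,\sE)=\dim\coker$, and $\cG(2,\sL,k)$ is the locus where $K^0\to K^1$ has rank $\leq\rk K^0-k$ --- equivalently the zero locus, on the Grassmannian bundle $\Gr(k,K^0)\to M$, of the induced section of $\cV^\vee\otimes K^1$, where $\cV$ is the tautological rank-$k$ subbundle. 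For a \emph{general} map of bundles this already gives every component dimension $\geq 3g-3+k\chi(\sE)-k^2=\rho(2,d,k,g)-g$; the task is to exhibit the extra structure forcing a gain of $2\binom{k}{2}$.

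That structure is as follows. By Serre duality $h^1(C,\sL)=h^0(C,\omega\sL^{-1})\geq 2$, so fix a two-dimensional $W\subseteq H^0(C,\omega\sL^{-1})$. For every $\sE$ with $\wedge^2\sE\cong\sL$, the canonical isomorphism $\sE\risom\sE^\vee\otimes\sL$ followed by multiplication by $s\in W$ is an $\omega$-valued alternating form $\psi_s$ on $\sE$; passing to cohomology (and using Serre duality again, $H^1(C,\sE)^\vee\cong H^0(C,\sE^\vee\otimes\omega)$) one obtains a map $\mu_W\colon H^0(C,\sE)\otimes W\to H^1(C,\sE)^\vee$, globalizing to a map of sheaves on $M$. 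The key algebraic input, which is exactly the Bertram--Feinberg--Mukai symmetry computation underlying the $\delta=0$ case of Theorem~\ref{thm:exp-dim-twist}, is that when the pairing $K^1\otimes(K^0\otimes W)\to\sO_M$ determined by $\mu_W$ is composed back with $K^0\to K^1$, the resulting pairing is \emph{symmetric} on each factor $K^0\otimes s$: the antisymmetric part of $v_1\otimes\psi_s(v_2)$, viewed in $\End\sE\otimes\omega$, is a multiple of the identity section --- this is the rank-$2$ fixed-determinant identity $\wedge^2\sE\otimes\sL^{-1}\cong\sO_C$, with $\End_0\sE\otimes\sL\cong\mathrm{Sym}^2\sE$ --- and so it dies in the trace-free part that governs fixed-determinant deformations. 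Feeding two independent forms $s_1,s_2$ into this, one gets that $\cG(2,\sL,k)$ is cut out, locally over $M$ and inside the locus carved out by the generic rank condition, by a degeneracy condition that is symmetric with respect to each of two independent forms; the excess dimension of such a locus over the generic expectation is $2\binom{k}{2}$ on the locus where $\mu_W$ restricted to $\cV\otimes W$ is nondegenerate, a Koszul-type computation reducing the two-form case to two independent copies of the standard inclusion $\wedge^2 V\hookrightarrow V\otimes V$.

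The hard part, and the reason the body of the paper develops a careful theory of symplectic Grassmannians, is to make this precise \emph{in families} and \emph{everywhere on $M$}, not just where the forms are nondegenerate. The map $\mu_W|_{\cV\otimes W}$ can degenerate --- for instance $h^1(C,\sE)<2k$ forces it --- and on that locus $B\subseteq\Gr(k,K^0)$ the naive version of the excess-dimension estimate weakens. I would handle $B$ by stratifying $\cG(2,\sL,k)$ according to $\dim\ker\big(\mu_W|_{V\otimes W}\big)$ and, on each stratum, either bound its dimension directly using the geometry of pairs of isotropy conditions on a Grassmannian, or use the kernel --- a distinguished subspace $V'\subseteq V$ on which the forms are too degenerate --- to reduce to a Brill--Noether locus for a smaller value of $k$ together with extra incidence data, and then close the induction using Theorem~\ref{thm:exp-dim-twist}. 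Verifying that the compensating constraints produced on the degenerate strata are genuinely independent of the $2\binom{k}{2}$ already extracted --- equivalently, that the ``doubly symplectic Grassmannian'' has the expected dimension even in degenerate cases --- is where I expect the real work to lie.
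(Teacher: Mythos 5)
Your setup is the right one in spirit---two alternating forms coming from a $2$-dimensional space $W\subseteq H^0(C,\omega\otimes\sL^{-1})$, the Brill--Noether locus realized as a doubly isotropic condition, and a hoped-for excess of $2\binom{k}{2}$---but the proposal stops exactly where the theorem actually lives, and the fallback you sketch for the degenerate locus is not the paper's route and is not shown to close. Two things are missing. First, even on the ``nondegenerate'' locus you assert, but do not prove, that the two sets of $\binom{k}{2}$ symmetry conditions are independent; this is not a routine Koszul reduction to ``two independent copies of $\wedge^2V\hookrightarrow V\otimes V$.'' The paper's technical core (Theorem \ref{thm:dsg-crit}, via Proposition \ref{prop:dsg-main} and the generalized-Jordan-form Lemmas \ref{lem:technical-first}--\ref{lem:technical-second}) is precisely a criterion for when those $2\binom{k}{2}$ linear conditions on $\Hom(V,\sE|_x/V)$ are dependent: this happens if and only if some nonzero combination of the two forms annihilates a $2$-dimensional subspace $V'\subseteq V$ paired against $\sE|_x/V$. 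Second, your plan to handle degeneracy by stratifying $\cG(2,\sL,k)$ by $\dim\ker(\mu_W|_{V\otimes W})$ and inducting on $k$ via Theorem \ref{thm:exp-dim-twist} is exactly the hard, unverified part: on such strata the $2\binom{k}{2}$ excess is lost and nothing in the proposal shows the incidence/kernel data compensates. Since degeneracy of $\mu_W\colon H^0(\sE)\otimes W\to H^1(\sE)^\vee$ is unavoidable (as you note, $h^1(\sE)<2k$ forces it), the proof as planned has a genuine gap.

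The paper avoids the degenerate strata altogether by not working with $H^0(\sE)$ and $H^1(\sE)$ directly. It embeds everything into the large twisted space $\widetilde{\sE}=p_{2*}(\sE(D')/\sE(-D'))$ for $D$ sufficiently ample and general, with the two residue-pairing symplectic forms of Lemma \ref{lem:construct-form}, and with $p_{2*}\sE(D')$ and $p_{2*}(\sE/\sE(-D'))$ both doubly isotropic. The point of Lemma \ref{lem:general-m} is that for such $D$, \emph{every} nonzero combination $\lambda_1\langle,\rangle_1+\lambda_2\langle,\rangle_2$ is injective on $H^0(C,\sE|_x(D))$ for every $x$ in a fixed quasicompact $\cU\subseteq\cM(2,\sL)$: a nonzero section of $\sE|_x(D)$ can vanish at only boundedly many points of $D$ (bounded instability on $\cU$), while the ratios $f_i=\vp_i/\vp_1$ are linearly independent at general points of $D$. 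Since every subspace $V$ arising from a point of $\cG_\cU(2,\sL,k)$ sits inside $H^0(\sE|_x(D))$, the hypothesis of the smoothness criterion of Theorem \ref{thm:dsg-crit} holds at every relevant point, so Corollary \ref{cor:iso-intersect-double} (plus Lemma \ref{lem:pullback-codim} and the codimension-to-dimension bookkeeping of Corollary \ref{cor:smooth-codim}) gives the codimension bound $k(4\deg D-d-2+2g)-2\binom{k}{2}$ everywhere, with no stratification or induction. To repair your argument you would either need to supply both the independence criterion and a working analysis of the degenerate strata, or switch to this twisted model where ampleness and generality of $D$ eliminate the degeneracy from the start.
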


In particular, we find a range of candidates for components
of $\cG_C(2,\sL,k)$ having dimension strictly greater than 
$\rho(2,d,k,g)$.

\begin{cor}\label{cor:new-comps} Suppose we have $C,\sL,k$ as in 
Theorems \ref{thm:exp-dim-twist} and \ref{thm:exp-dim-double},
and further we either have $\delta$ as in Theorem \ref{thm:exp-dim-twist} 
such that
$\binom{k-\delta}{2}> g$,
or we have $h^1(C,\sL)>1$, with
$2\binom{k}{2}> g$,

Then if $\cG(2,\sL,k)$ is non-empty, it has dimension strictly greater than
$\rho(2,d,k,g)$.
\end{cor}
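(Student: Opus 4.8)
The plan is to obtain Corollary \ref{cor:new-comps} as an immediate formal consequence of Theorems \ref{thm:exp-dim-twist} and \ref{thm:exp-dim-double}: in each of the two cases the lower bound those theorems provide for the dimension of every irreducible component of $\cG(2,\sL,k)$ already exceeds $\rho(2,d,k,g)$, so there is nothing further to prove beyond checking a sign.

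First I would treat the case in which $\delta$, as in Theorem \ref{thm:exp-dim-twist}, satisfies $\binom{k-\delta}{2}>g$. Theorem \ref{thm:exp-dim-twist} asserts that every irreducible component of $\cG(2,\sL,k)$ has dimension at least
$$\rho^1_{\sL}(k,g)=\rho(2,d,k,g)-g+\binom{k-\delta}{2},$$
and the hypothesis $\binom{k-\delta}{2}-g>0$ makes the right-hand side strictly larger than $\rho(2,d,k,g)$. Since a non-empty stack has at least one irreducible component, the asserted bound follows. Next, in the case $h^1(C,\sL)>1$ and $2\binom{k}{2}>g$, the hypothesis $h^1(C,\sL)\geq 2$ of Theorem \ref{thm:exp-dim-double} is met, so every irreducible component has dimension at least
$$\rho^2_{\sL}(k,g)=\rho(2,d,k,g)-g+2\binom{k}{2},$$
which again strictly exceeds $\rho(2,d,k,g)$ by the sign hypothesis. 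This disposes of both cases.

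There is essentially no obstacle here: all of the substance resides in Theorems \ref{thm:exp-dim-twist} and \ref{thm:exp-dim-double}, and the corollary merely records the range of $\delta$ (respectively, of $k$) in which their refined estimates become genuine strict improvements on the naive count. The one point worth emphasizing is why the statement must be phrased conditionally: unlike classical rank-$1$ Brill-Noether theory, these hypotheses give no handle on non-emptiness of $\cG(2,\sL,k)$, so one can only conclude ``if $\cG(2,\sL,k)$ is non-empty.''
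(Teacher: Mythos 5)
Your proof is correct and is the only reasonable one: the corollary is an immediate arithmetic consequence of the dimension lower bounds in Theorems \ref{thm:exp-dim-twist} and \ref{thm:exp-dim-double}, and indeed the paper gives no separate proof for it. Nothing to add.
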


Even in cases where these modified expected dimensions turn out to be
sharp, one is left to determine when they are non-empty: certainly there
are cases where the expected dimension is positive but the corresponding
locus is empty.

Our techniques generalize the arguments of Bertram, Feinberg and Mukai
relating to symplectic forms on vector bundles. Theorem 
\ref{thm:exp-dim-twist} is a relatively straightforward generalization,
but Theorem \ref{thm:exp-dim-double} requires a detailed analysis of
the behavior of ``doubly symplectic Grassmannians,'' which parametrize
subbundles which are simultaneously isotropic for a pair of symplectic
forms. These Grassmannians appear to be new, and the analysis of their
singularities forms the technical core of the present paper.

One aspect of our techniques is that they transparently generalize to
the case where the curve $C$ is allowed to vary in families. We do not
state our results in this generality because we expect the main
applications of such generalization will arise when combined with the
appropriate degeneration techniques. 

\subsection*{Acknowledgements}

I would like to thank Izzet Coskun, Montserrat Teixidor i Bigas, and 
Peter Newstead for helpful conversations.

\section{Preliminaries}\label{sec:prelims}

We work throughout over a base field $F$, of arbitrary characteristic.
For convenience, we assume throughout that $F$ is algebraically
closed. However, Theorems \ref{thm:exp-dim-twist} and 
\ref{thm:exp-dim-double} hold for arbitrary fields, and indeed the general
case reduces immediately to the algebraically closed case.

Given a vector bundle $\sE$ on a scheme $X$, recall that an alternating
form on $\sE$ is a morphism $\langle,\rangle: \sE \otimes \sE \to \sO_X$
satisfying $\langle s, s\rangle = 0$ for any section $s$ of $\sE$ on an
open subset of $X$. This implies that $\langle,\rangle$ is skew-symmetric
and indeed the two are equivalent away from characteristic $2$. An 
alternating form $\langle,\rangle$ is symplectic if it is nondegenerate
on all fibers of $\sE$.

We argue extensively in terms of codimension. If $Z \subseteq X$ is
closed, but $Z$ and $X$ are not irreducible, recall that the standard 
codimension convention is that
$$\codim_X Z 
= \min_{Z' \subseteq Z} (\max_{X' \subseteq X} (\codim_{X'} Z')),$$
where $Z'$ and $X'$ range over the irreducible components of $Z$ and $X$,
respectively. Thus, we see that the condition that $Z$ has codimension at 
most $d$ in $X$ is weaker than the condition that every component of $Z$ 
has codimension at most $d$ in $X$, but this latter is equivalent to saying 
that every component of $Z$ has codimension at most $d$ in every component 
of $X$. This distinction is important in applications, and in our results 
(see for instance Corollaries \ref{cor:iso-intersect} and 
\ref{cor:iso-intersect-double}), the latter sort of bound will arise 
naturally. In addition, for lack of a suitable reference we have included
in Appendix \ref{sec:codim-stacks} a brief treatment of codimension for
algebraic stacks.

We work systematically with algebraic stacks to avoid difficulties and
pathologies arising from the use of coarse moduli spaces. However, our
arguments adapt to the corresponding coarse moduli spaces as long as
one is willing to restrict to the stable locus (and moreover, the dimension
statement for the stacks imply the same statements for the coarse moduli
spaces). We generally use calligraphic letters for stacks and roman
letters for schemes. 

Our notation for the various moduli spaces will be as follows.

\begin{defn} If $X$ is a scheme, and $\sE$ a vector bundle of rank
$r$ on $X$, then for any $k<r$ we denote by
$G(k,\sE)$ the relative Grassmannian scheme over $X$, with
$T$-valued points of $G(k,\sE)$ corresponding to rank-$k$ subbundles
of the pullback of $\sE$ from $X$ to $T$. Similarly, if $\cX$ is an 
algebraic stack, then $\cG(k,\sE)$ is the relative Grassmannian of
rank-$k$ subbundles of $\sE$. 
\end{defn}

The usual argument goes through in the case of algebraic stacks to
show that $\cG(k,\sE)$ is smooth of relative dimension $k(r-k)$ over
$\cX$.

Our convention throughout is that a curve is geometrically integral,
and $C$ always denote a smooth, projective curve of genus $g$ over $F$.
The canonical line bundle on $C$ is denoted by $\omega$.

\begin{defn} Let $\sL$ be a line bundle on $C$. For any $r \geq 2$, we have
the moduli stack $\cM(r,\sL)$ of vector bundles $\sE$ of rank $r$ on $C$ 
together with an isomorphism $\det \sE \risom \sL$. 
\end{defn}

This is an Artin stack, smooth over $F$ of dimension $(r^2-1)(g-1)$. On the 
open substack of stable bundles, it is a Deligne-Mumford stack (assuming 
that $\ch F \neq r$); note that the isomorphism $\det \sE \risom \sL$ reduces
the dimension of all automorphism groups by $1$.

\begin{defn} Given $C$, $\sL$ as above, and $k> 0$, we denote by
$\cG(r,\sL,k)$ the stack whose $T$-valued points consist of triples
$(\sE,\vp,\sV)$, where: 
\begin{ilist}
\itm $\sE$ is a vector bundle of rank $r$ on $C_T:=T \times_F C$;
\itm $\vp:\det \sE \risom \sL$ is an isomorphism;
\itm $\sV \subseteq p_{1*} \sE$ is locally free of rank $k$ on $T$,
and the inclusion into $p_{1*} \sE$ remains injective after arbitrary base 
change, in the sense that for all $T' \to T$, we have that the
induced map $\sV|_{T'} \to p_{1*}' (\sE|_{C_{T'}})$ remains injective, where
$p_{1}':C_{T'} \to T'$ is the projection morphism.
\end{ilist}
\end{defn}

As in the classical case, $\cG(r,\sL,k)$ may be constructed as a closed
substack of a relative Grassmannian stack over $\cM(r,\sL)$, by
twisting the universal bundle with (the pullback of) a sufficiently ample
divisor on $C$ prior to pushing forward. The only difference is that
because $\cM(r,\sL)$ is not quasicompact, one has to carry out the
construction locally on an open cover by quasicompact substacks.
We thus find that the map $\cG(r,\sL,k) \to \cM(r,\sL)$ is representable
by schemes, and is indeed projective, at least locally on $\cM(r,\sL)$.
The construction also shows that every component of $\cG(r,\sL,k)$ has
dimension at least $\rho-g$.

\section{Symplectic and Alternating Grassmannians}

Let $X$ be a scheme, and $\sE$ a vector bundle on $X$ of rank $r$.
Suppose we are given a symplectic form $\langle,\rangle$ on $\sE$. Then we 
make the following definitions:

\begin{defn} Given $k<r$, we have the {\bf symplectic Grassmannian}
$SG(k,\sE,\langle,\rangle)$ defined as the closed subscheme of
$G(k,\sE)$ consisting of subbundles isotropic for $\langle,\rangle$.
\end{defn}

Now suppose that $\langle,\rangle$ is an alternating but not necessarily
nondegenerate form on $\sE$.
We can similarly define:

\begin{defn} Given $k<r$, we have the {\bf alternating Grassmannian}
$AG(k,\sE,\langle,\rangle)$ defined as the closed subscheme of
$G(k,\sE)$ consisting of subbundles isotropic for $\langle,\rangle$.
\end{defn}

We make the following definition:

\begin{defn} We say that $\langle,\rangle$ has {\bf uniform degeneracy}
of rank $p$ if the kernel of the induced map
$\sE \to \sE^*$ is given by a subbundle $\sK$ of rank $p$, and furthermore
$\sK$ is equal to the kernel even after base change.
\end{defn}

Unless stated otherwise, we assume throughout this section that 
$\langle,\rangle$ has uniform degeneracy of rank $p$.
Note that $\langle,\rangle$ induces a symplectic form on $\sE/\sK$, so 
we necessarily have that $r-p=:2\delta$ is even. It is then standard that:

\begin{prop}\label{prop:alt-grass} $AG(k,\sE,\langle,\rangle)$ is non-empty
if and only if $k \leq r-\delta$, and in this case is universally 
topologically flat over $X$, with fibers which are pure of dimension
$$k(r-k)-\binom{k}{2}+\binom{k-\delta}{2}.$$
Furthermore, if a subspace $V \subseteq \sE|_x$ corresponds to a point of 
$AG(k,\sE,\langle,\rangle)$ lying over some $x \in X$, then
$$\dim V \cap \sK|_x \geq k-\delta.$$

In addition, $AG(k,\sE,\langle,\rangle)=SG(k,\sE,\langle,\rangle)$ is 
smooth in the case $p=0$.
\end{prop}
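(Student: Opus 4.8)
The plan is to establish Proposition \ref{prop:alt-grass} by reducing the general alternating case to the nondegenerate symplectic case via the canonical projection $\sE \to \sE/\sK$, then handling the symplectic case directly. Since $\langle,\rangle$ has uniform degeneracy of rank $p$, the kernel subbundle $\sK$ behaves well under base change, and $\langle,\rangle$ descends to a symplectic form on the rank-$2\delta$ bundle $\sE/\sK$. The first key observation is that a rank-$k$ subbundle $\sF \subseteq \sE$ is isotropic for $\langle,\rangle$ if and only if the image of $\sF$ in $\sE/\sK$ is isotropic for the induced symplectic form; since $\sK$ is contained in every isotropic subbundle's radical, this image has rank at most $\delta$. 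I would make this precise fiberwise first: for $V \subseteq \sE|_x$ of dimension $k$, isotropy forces $\dim(\text{image of } V \text{ in } (\sE/\sK)|_x) \leq \delta$, hence $\dim(V \cap \sK|_x) \geq k - \delta$, which gives the stated inequality and forces $k - \delta \leq p$, i.e. $k \leq r - \delta$, for nonemptiness.

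For the dimension count and topological flatness, I would stratify $AG(k,\sE,\langle,\rangle)$ by the rank $j$ of the image in $\sE/\sK$, where $\max(0, k - p) \leq j \leq \min(k, \delta)$. Over a point $x$, choosing an isotropic $V$ amounts to choosing a $j$-dimensional isotropic subspace $W$ of $(\sE/\sK)|_x$ (a point of the ordinary symplectic Grassmannian $SG(j, 2\delta)$, of dimension $j(2\delta - j) - \binom{j}{2}$), together with a $k$-dimensional subspace of the preimage of $W$ in $\sE|_x$ containing $\sK|_x$; the preimage has dimension $j + p$, and choosing a $k$-dimensional subspace containing the fixed $p$-dimensional $\sK|_x$ is a point of a Grassmannian of dimension $(k - p)(j + p - k)$. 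Adding these and checking that the open stratum $j = \min(k,\delta)$ is dense and has the claimed dimension $k(r-k) - \binom{k}{2} + \binom{k-\delta}{2}$ is the bookkeeping core; one also verifies the lower-dimensional strata are genuinely lower-dimensional, so the fibers are pure of the stated dimension. Topological flatness over $X$ follows because this stratified description is uniform in $x$ — indeed the whole construction can be carried out relatively, realizing $AG(k,\sE,\langle,\rangle)$ as fibered over the relative symplectic Grassmannian of $\sE/\sK$ through a chain of Grassmannian bundles, up to the stratification by $j$.

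For the final sentence, when $p = 0$ we have $\sK = 0$, $\delta = r/2$, the form is already symplectic, and $AG = SG$ trivially; smoothness of $SG(k,\sE,\langle,\rangle)$ over $X$ is the classical fact that the symplectic Grassmannian is smooth. The cleanest argument is the standard one: working on fibers (and then spreading out, since the relative tangent space computation is uniform), the tangent space to $SG(k, 2m)$ at an isotropic $V$ is identified with $\{\phi \in \Hom(V, \sE_x/V) : \langle \phi(v), w\rangle + \langle v, \phi(w)\rangle = 0 \ \forall v,w \in V\}$, which by nondegeneracy has constant dimension $k(r-k) - \binom{k}{2}$ equal to the dimension of $SG(k,2m)$ computed from the stratification above with $\delta = m$ (so the correction term $\binom{k-\delta}{2}$ vanishes precisely when $k \leq \delta = m$, which is exactly the range where $SG$ is nonempty). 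Alternatively one exhibits $SG(k,\sE,\langle,\rangle)$ étale-locally as a tower of affine-bundle-like fibrations over a point, or invokes that the relevant parabolic of $\mathrm{Sp}_{2m}$ acts transitively with smooth quotient. The main obstacle I anticipate is not any single step but getting the stratification bookkeeping exactly right — in particular confirming that the dimensions of the strata add up to the asserted formula and that the top stratum dominates — and ensuring that "universally topologically flat" is correctly deduced from the relative stratified structure rather than from naive flatness, since the total space need not be flat (only topologically flat) over $X$ when the strata have varying relative dimension as the degeneracy interacts with special points; here uniform degeneracy is exactly the hypothesis that rules this out.
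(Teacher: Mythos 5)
Your overall route is essentially the paper's: reduce modulo $\sK$ to the symplectic form on $\sE/\sK$, observe that isotropy forces $\dim(V\cap\sK|_x)\geq k-\delta$ (whence the nonemptiness bound), stratify by the rank $j$ of the image in $\sE/\sK$ (the paper indexes by $i=k-j=\dim(\sF\cap\sK)$ and builds each stratum as a tower of Grassmannian-type bundles), and deduce the fiber dimension from the top stratum. However, your dimension bookkeeping, which you yourself identify as the core, is set up incorrectly. For a fixed isotropic $W$ of dimension $j$ in $(\sE/\sK)|_x$, the subspaces $V$ in the stratum are the $k$-dimensional subspaces of the $(j+p)$-dimensional preimage $\pi^{-1}(W)$ that map \emph{onto} $W$ (equivalently $\dim(V\cap\sK|_x)=k-j$); they need not contain $\sK|_x$, and for $j>k-p$ they cannot. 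Requiring $V\supseteq\sK|_x$ forces $j=k-p$ and is inconsistent with the stratification, and the resulting fiber count $(k-p)(j+p-k)$ should instead be $k(j+p-k)$, the dimension of the open locus in $G(k,\pi^{-1}(W))$ of subspaces surjecting onto $W$. The discrepancy on each stratum is $p(j+p-k)$, so with your numbers the top stratum $j=\min(k,\delta)$ falls short of the asserted value $k(r-k)-\binom{k}{2}+\binom{k-\delta}{2}$ by $p\bigl(\min(k,\delta)+p-k\bigr)$, which is nonzero whenever $p>0$ and $k<r-\delta$; with the corrected count the formula does come out.

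Second, even after fixing the count, purity of the fibers does not follow from ``the lower strata are genuinely lower-dimensional'': a priori a component of the fiber could be the closure of a component of a deeper stratum and hence have smaller dimension. What is actually needed, and what the paper proves by an explicit one-parameter degeneration (replacing a basis vector $e_i$ of $V\cap\sK|_x$ by $e_i+te$ with $e\in V^{\perp}\smallsetminus\spn(\sK|_x,V)$, which exists by a dimension count), is that each stratum lies in the closure of the previous one, so the whole fiber is the closure of the top stratum, irreducible of the stated dimension. You list density of the open stratum as a check to be performed but give no argument for it; this closure relation is the real geometric content of the proposition, not routine bookkeeping, and also what makes the deduction of universal topological flatness from the smooth relative strata clean. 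The remaining ingredients in your sketch --- the fiberwise inequality $\dim(V\cap\sK|_x)\geq k-\delta$, the resulting nonemptiness criterion $k\leq r-\delta$, and smoothness of $SG$ when $p=0$ via the standard tangent-space or homogeneity argument --- are fine and consistent with the paper.
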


Note that if $p=0$, we necessarily have $k \leq \delta=\frac{r}{2}$ in order 
for $SG(k,\sE,\langle,\rangle)$ to be non-empty. Here, by ``universally
topologically flat'' we mean that every irreducible component of 
$AG(k,\sE,\langle,\rangle)$ dominates an irreducible component of $X$,
and the same holds after arbitrary base change.
Note that in particular any flat morphism is universally topologically
flat.

\begin{proof} We define a stratification of $AG(k,\sE,\langle,\rangle)$ 
by locally closed subschemes $T_i$ as follows. 
If $\sF$ is the universal subbundle on $AG(k,\sE,\langle,\rangle)$, then
$T_i$ is defined as the locus on which the rank of the intersection
$\sF \cap \sK$ is equal to $i$. We have $0 \leq i \leq p$ by definition,
so if we prove that each $T_i$ is smooth over $X$, the desired smoothness
statement for $SG(k,\sE,\langle,\rangle)$ will also follow. In fact, we will
prove that each $T_i$ is smooth over $X$ of relative dimension
$$i(p-i)+(k-i)(r-k)-\binom{k-i}{2},$$ 
non-empty if and only if
$$k-\delta \leq i \leq p,$$ 
with irreducible fibers, and such that each $T_i$ is in the closure of 
$T_{i-1}$ as long as $T_{i-1}$ is non-empty.

Let $\tilde{T}_i$ be the relative flag variety over $T_i$ parametrizing 
$\sF$ together with a complete flag of subbundles of $\sF/(\sF \cap \sK)$. 
Then $\tilde{T}_i$ is smooth over $T_i$ of relative dimension 
$\binom{k-i}{2}$.  On the other hand, we can construct $\tilde{T}_i$ as a 
tower of smooth schemes over $X$ as follows: we let $\tilde{T}_i^0$
be the relative Grassmannian of rank-$i$ subbundles $\sF^0$ of $\sK$. We 
then define each $\tilde{T}_i^j$ for $j=1,\dots,k-i$ over 
$\tilde{T}_i^{j-1}$ to be the open subscheme of the relative Grassmannian
parametrizing rank-$(i+j)$ isotropic subbundles $\sF^j$ of $\sE$ which 
contain $\sF^{j-1}$ and such that $\sF^j \cap \sK = \sF^0$. It is then
clear that $\tilde{T}_i^{k-i}=\tilde{T}_i$. We note that
since we are working with an alternating form
the condition that $\sF^j$ is isotropic is equivalent to 
the conditions that it contain $\sF^{j-1}$ and be contained in 
$(\sF^{j-1})^{\perp}$. Moreover, since the form is degenerate precisely
on $\sK$, and $\sF^{j-1} \cap \sK=\sF^0$, we see that
$(\sF^{j-1})^{\perp}$ has rank $r-j+1$. 
We thus see that if $\tilde{T}_i$ is non-empty, it is smooth over $X$ of 
relative dimension 
$$i(p-i)+\sum_{j=1}^{k-i} (r-i-2j+1) = i(p-i)+(k-i)(r-k).$$
Moreover, $\tilde{T}_i$ is non-empty if and only each $\sF^j$ may be chosen 
to have no larger than the desired intersection with $\sK$, which holds 
if and only if $p+(i+j) \leq (r-j+1)+i$ for all $j$, or equivalently 
$2i \geq p-r+2k -1=-2\delta+2k-1$, which is the same as $i \geq k-\delta$.

It remains to check that in every fiber, every point of $T_i$ is in the
closure of $T_{i-1}$ for $i-1 \geq k-\delta$.
Accordingly, suppose $X$ is a point, and $\sF_0$ is an isotropic 
subspace of $\sE$, with $\sF_0 \cap \sK$ having dimension $i$. Choose a
basis $e_1,\dots,e_k$ for $\sF_0$ such that $e_1,\dots,e_i$ span
$\sF_0 \cap \sK$. Let $e$ be a vector contained in 
$\sF_0^{\perp} \smallsetminus \spn(\sK,\sF_0)$; note that 
$\dim \sF_0^{\perp}=r-k+i=2\delta+p-k+i>p+k-i$ by hypothesis,
so such an $e$ exists. Then for $t \neq 0$, the vector space 
$\sF_t=\spn(e_1,\dots,e_{i-1},e_i+te,e_{i+1},\dots,e_k)$ lies in $T_{i-1}$,
and $\sF_t$ goes to $\sF_0$ as $t$ goes to $0$, so we conclude that 
$\sF_0$ is in the closure of $T_{i-1}$.

We thus have that $AG(k,\sE,\langle,\rangle)$ is universally topologically
flat, with pure fiber dimension
equal to the relative dimension of $T_{\max\{0,k-\delta\}}$, which is
$$\begin{cases}k(r-k)-k\delta+\binom{\delta+1}{2} : &k \geq \delta \\
k(r-k)-\binom{k}{2}:& k \leq \delta,\end{cases}$$
which yields the desired formula.
\end{proof}

We next recall the following well-known lemma in dimension theory.

\begin{lem}\label{lem:pullback-codim} Suppose $f:Y' \to Y$ is a morphism 
of $X$-schemes, locally of finite type.
Let $Z \subseteq Y$ be an irreducible closed subscheme of codimension $d$
containing a smooth point of $Y$ over $X$.
Suppose that either $X$ is regular, or $X$ is universally catenary and $Z$ 
is universally topologically flat 
over $X$. Then every irreducible component of $f^{-1}(Z)$ has codimension at 
most $d$.
\end{lem}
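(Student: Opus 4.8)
The plan is to reduce the assertion to a count of Krull dimensions of local rings, controlled by the dimension formula for morphisms locally of finite type over a universally catenary base (as in \cite{EGA4}, IV.5.6.5): a regular scheme is universally catenary, so the formula is available in the first case, and it is handed to us directly in the second, where the topological flatness of $Z$ guarantees that $Z$ dominates a component of $X$ and hence that the formula is an equality along $Z$. The hypothesis that $Z$ contains a point smooth over $X$ enters as follows: the smooth locus of $Y \to X$ is open, so it contains the generic point $\eta$ of $Z$; thus $Y \to X$ is flat (indeed smooth) at $\eta$, the point $\eta$ lies in a unique component $Y_0$ of $Y$, and $\codim_Y Z$ is the codimension $d$ of $Z$ inside the integral scheme $Y_0$.

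Since codimension may be computed on affine opens, I would first reduce to the case that all schemes in sight are affine. Fix an irreducible component $W$ of $f^{-1}(Z)$, with generic point $w$, image $z = f(w) \in Z$ and image $x \in X$, and a component $Y_1'$ of $Y'$ containing $W$; it suffices to bound $\dim \cO_{Y_1',w} = \codim_{Y_1'} W$ by $d$. In the main case, where $W$ dominates $Z$, one has $f(Y_1') \subseteq Y_0$, and, writing $\xi$ for the image of $\eta$ in $X$ and using flatness of $Y \to X$ at $\eta$, one gets $d = \dim \cO_{X,\xi} + e$ with $e$ the codimension of the fibre $Z_\xi$ in $Y_\xi$; moreover $Z_\xi$ still meets the smooth locus of $Y_\xi$ over $\kappa(\xi)$. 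This sets up an induction on $\dim X$: the base case $X$ a field is handled by Chevalley's theorem on the dimension of fibres applied to the dominant morphism $Y_{1,\xi}' \to Y_{0,\xi}$, which forces $\dim W \ge \dim Z + (\dim Y_1' - \dim Y_0) = \dim Y_1' - d$; and the inductive step combines the analogous bound on the fibres $(f^{-1}Z)_\xi \to Z_\xi$ with the dimension formula for $Y_1' \to X$ at $w$, using $x = \xi$ (which holds because $W$ dominates $Z$) to conclude $\codim_{Y_1'} W \le \dim\cO_{X,\xi} + e = d$.

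The genuinely delicate case, and the one I expect to be the main obstacle, is that of a component $W$ of $f^{-1}(Z)$ whose image $Z_W := \overline{f(W)} \subsetneq Z$ is a proper closed subset: such a $Z_W$ need not meet the smooth locus of $Y/X$, so the induction of the previous paragraph does not apply to it, and applying the lemma to $Z_W$ itself gives only the far weaker bound $\codim_{Y'} W \le \codim_Y Z_W$. I would attack these components by a Noetherian induction on closed subsets of $Z$: a regular system of parameters for the regular local ring $\cO_{Y,\eta}$ (regular because $Y \to X$ is smooth at $\eta$, and $X$ is regular, or else because the fibre is regular there) extends to $d$ functions cutting $Z$ out scheme-theoretically on an open $V \ni \eta$ of $Y$, so by Krull's Hauptidealsatz every component of $f^{-1}(Z)$ meeting $f^{-1}(V)$ has codimension at most $d$ in $Y'$; the components with $Z_W \subseteq Z \smallsetminus V$ then lie over the proper closed subset $Z \smallsetminus V$ of $Z$, and are fed back through the same mechanism. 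The heart of the matter is to show, for such a non-dominating $W$, that the excess fibre dimension forced on $f$ over $Z_W$ is never less than $\codim_Z Z_W$, so that it exactly offsets the drop $\dim Z_W < \dim Z$ and keeps $\codim_{Y'} W \le d$; assembling the dimension formulas for $Y' \to X$ and for $Y \to X$ into this single inequality is precisely where universal catenarity (respectively regularity) of $X$ is used in full strength.
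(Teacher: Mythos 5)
There is a genuine gap, and you have in fact pointed at it yourself: the case of a component $W$ of $f^{-1}(Z)$ whose image closure $Z_W$ is a proper closed subset of $Z$ is never actually handled. Your Hauptidealsatz step only controls components meeting $f^{-1}(V)$, where $V$ is a neighborhood of the generic point $\eta$ of $Z$ on which $Z$ is cut out by $d$ equations; the leftover components lie over $Z \smallsetminus V$, which has codimension strictly greater than $d$ in $Y$ and need not meet the smooth locus of $Y/X$, so ``feeding them back through the same mechanism'' only yields bounds worse than $d$. The substitute you propose --- that the excess fibre dimension of $f$ over $Z_W$ is at least $\codim_Z Z_W$ --- is precisely the hard content of the lemma (in the regular case it is essentially Serre's intersection bound, in the form of Hochster's theorem that the paper simply cites), and your sketch gives no argument for it; asserting that catenarity or regularity is ``used in full strength'' there is naming the difficulty, not resolving it. A secondary but real problem: in the universally catenary case your claim that $\cO_{Y,\eta}$ is regular ``because the fibre is regular there'' is unjustified --- smoothness of $Y$ over a non-regular $X$ does not make $Y$ regular, and regularity of the fibre at $\eta$ gives regularity of $\cO_{Y,\eta}$ only when the local ring of $X$ at the image of $\eta$ is a field (it fails, e.g., if $X$ is non-reduced at that generic point) --- so the existence of $d$ local equations for $Z$ near $\eta$, on which your only complete step rests, is not available in that case.

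For comparison, the paper avoids the dominating/non-dominating dichotomy entirely by the graph trick: after reducing to $Y$ smooth over $X$, it writes $f^{-1}(Z)$ as the intersection of the graph $\Gamma_f$ with $Y' \times_X Z$ inside $Y' \times_X Y$, notes that $\Gamma_f$ is a local complete intersection (being the pullback of the diagonal, which is one by smoothness of $Y/X$), and then either quotes Hochster's theorem (when $X$ is regular) or uses Krull's theorem plus catenarity to say that cutting by a local complete intersection drops codimension by at most its codimension, with the universal topological flatness of $Z$ over $X$ used to compute the codimension of $Y' \times_X Z$ fibrewise. If you want a proof along your lines, you would in effect have to reprove that intersection-theoretic input; as written, your argument assumes it at the decisive point.
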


Recall that $X$ being universally catenary is a mild chain-theoretic
condition which is satisfied if, for instance, $X$ is of finite type
over a Cohen-Macaulay scheme (Corollary 6.3.7 of \cite{ega42}).

\begin{proof} Since codimension is local, we may suppose that $Y$ is
smooth over $X$. If $X$ is regular, then so is $Y$, and the result is a 
theorem of Hochster (Theorem 7.1 of \cite{ho1}),
based on a theorem of Serre. Suppose instead that 
$X$ is universally catenary, and $Z$ is universally topologically flat over 
$X$. We have that $f^{-1}(Z)$ is the 
intersection of the graph
$\Gamma_f \subseteq Y' \times_X Y$ with $Y' \times _X Z$. Moreover, 
$\Gamma_f$ is a local complete intersection in $Y' \times_X Y$ because
it is the pullback under $f \times \id$ of the diagonal 
$\Delta \subseteq Y \times_X Y$, which is a local complete intersection
subscheme by smoothness of $Y$ over $X$. Under our hypotheses, we have
$Y' \times_X Y$ locally Noetherian and catenary, so intersection with a 
local complete intersection can only decrease codimension of components,
and it is thus enough to see that
every component $Z'$ of $Y' \times_X Z$ has codimension at most $d$ in
every component of $Y' \times_X Y$ containing it. But we note that 
$Y' \times_X Z$ is topologically flat over $Y'$ by hypothesis, so $Z'$
is supported in a generic fiber over $Y'$, and we can compute codimension
in that fiber. Since codimension in a fiber is preserved under base change,
and both $Y$ and $Z$ have constant dimension in fibers by hypothesis,
we obtain the desired bound.
\end{proof}

\begin{rem} One sees that some additional hypotheses on $X$ or $Z$ along
the lines of those of the lemma are in fact necessary, as otherwise we can 
set $Y=X$, and let both $Y'$ and
$Z$ be closed subschemes, and we obtain subadditivity of intersection
codimension as a special case. That this is false without some
regularity hypotheses is well known. In our application, in fact both
sets of hypotheses of the lemma are satisfied.
\end{rem}

We finally conclude the desired statement on the dimension of spaces
of subbundles when degeneracy conditions are imposed with respect to
a symplectic form.

\begin{cor}\label{cor:iso-intersect} Suppose $\sE$ is a vector bundle of 
rank $r$ on an algebraic stack $\cX$ of finite type over a Cohen-Macaulay
algebraic stack. Given $\langle,\rangle$ a symplectic 
form on $\sE$, let $\sF$ and 
$\sG$ be subbundles of $\sE$ of ranks $s$ and $t$, with $\sG$ isotropic 
with respect to $\langle,\rangle$, and $\sF$ having degeneracy of uniform 
rank $s-2\delta$. Then for any $k>0$, every component of the closed substack
$\cG(k,\sF\cap\sG)$ of $\cG(k,\sE)$ parametrizing (necessarily 
isotropic) subbundles of $\sE$ contained in both $\sF$ and $\sG$ has 
codimension at most
$$k(2r-s-t)-\binom{k-\delta}{2}$$ 
in every component of $\cG(k,\sE)$. Moreover, every such subbundle 
intersects the degeneracy
subbundle of $\sF$ with rank at least $k-\delta$.
\end{cor}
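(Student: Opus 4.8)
The plan is to reduce the statement to Proposition \ref{prop:alt-grass} via Lemma \ref{lem:pullback-codim}. First I would observe that $\cG(k,\sF\cap\sG)$ can be realized as a fibered product. Consider the relative Grassmannian $\cG(k,\sG)$ of rank-$k$ subbundles of $\sG$. On $\cG(k,\sG)$ the universal subbundle is automatically isotropic for $\langle,\rangle$ since $\sG$ is; but we may instead restrict $\langle,\rangle$ to $\sF$, where it has uniform degeneracy of rank $s-2\delta$, and form the alternating Grassmannian $AG(k,\sF,\langle,\rangle|_\sF)$ — or rather, we want subbundles contained in \emph{both} $\sF$ and $\sG$. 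So the cleaner approach is: inside $\cG(k,\sE)$, the locus $\cG(k,\sG)$ is a closed substack (cutting out the condition that the universal rank-$k$ subbundle lie in $\sG$), and likewise $\cG(k,\sF)$; and $\cG(k,\sF\cap\sG)$ is their scheme-theoretic intersection, or equivalently $\cG(k, \sF\cap\sG) = \cG(k,\sG) \times_{\cG(k,\sE)} \cG(k,\sF)$.

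The key step is to apply Lemma \ref{lem:pullback-codim} with a judicious choice of $Y$, $Y'$, and $Z$. I would take $Y = \cG(k,\sE)$, which is smooth over $\cX$ of relative dimension $k(r-k)$; take $Y' = \cG(k,\sG)$ with its natural (closed immersion, hence finite type) map $f$ to $Y$, noting $\cG(k,\sG)$ is smooth over $\cX$ of relative dimension $k(t-k)$, so it has codimension $k(r-t)$ in $\cG(k,\sE)$; and take $Z = AG(k,\sF,\langle,\rangle|_{\sF})$ sitting inside $\cG(k,\sE)$ — more precisely, $Z$ is the closure in $\cG(k,\sF)$, pushed into $\cG(k,\sE)$, of the alternating Grassmannian of $\sF$. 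By Proposition \ref{prop:alt-grass}, $AG(k,\sF,\langle,\rangle|_\sF)$ is universally topologically flat over $\cX$ of pure relative fiber dimension $k(s-k) - \binom{k}{2} + \binom{k-\delta}{2}$, so it has codimension in $\cG(k,\sE)$ equal to $d := k(r-k) - \big(k(s-k) - \binom{k}{2} + \binom{k-\delta}{2}\big) = k(r-s) + \binom{k}{2} - \binom{k-\delta}{2}$. Since $\cX$ is of finite type over a Cohen–Macaulay algebraic stack, it is universally catenary (using the codimension-of-stacks material in Appendix \ref{sec:codim-stacks} and Corollary 6.3.7 of \cite{ega42}), and $Z$ is universally topologically flat, so the hypotheses of Lemma \ref{lem:pullback-codim} are met; I also need that $Z$ contains a smooth point of $\cG(k,\sE)$ over $\cX$, which holds since $\cG(k,\sE)$ is everywhere smooth over $\cX$. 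The lemma then gives that every component of $f^{-1}(Z)$ has codimension at most $d$ in $Y' = \cG(k,\sG)$. But $f^{-1}(Z)$ is exactly $\cG(k,\sF\cap\sG)$: a rank-$k$ subbundle of $\sG$ lands in $Z$ iff it is moreover a subbundle of $\sF$ (automatically isotropic, being contained in the isotropic $\sG$). Hence every component of $\cG(k,\sF\cap\sG)$ has codimension at most $d$ in every component of $\cG(k,\sG)$, and adding the codimension $k(r-t)$ of $\cG(k,\sG)$ in $\cG(k,\sE)$ — using transitivity of the ``codimension in every component'' bound — yields codimension at most $d + k(r-t) = k(r-s) + \binom{k}{2} - \binom{k-\delta}{2} + k(r-t) = k(2r-s-t) - \binom{k-\delta}{2} + \binom{k}{2}$.

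I notice this differs from the claimed bound $k(2r-s-t) - \binom{k-\delta}{2}$ by the extra $\binom{k}{2}$, which signals that I have the roles of $\sF$ and $\sG$ not quite optimally arranged: I should instead take $Y' = \cG(k,\sF)$ — no, that is not smooth in the needed way either. The correct bookkeeping is to carry out the fibered product symmetrically: realize $\cG(k,\sF \cap \sG)$ as the locus in $\cG(k,\sG)$ where the universal subbundle lies in $\sF$, and cut this out not via a generic alternating Grassmannian of $\sF$ but by first restricting the symplectic form to $\sG$, where it vanishes identically, so that in fact the relevant degeneracy locus on $\cG(k,\sG)$ is governed by $\sF\cap\sG$ directly. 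The cleanest fix, which I would pursue, is to apply Lemma \ref{lem:pullback-codim} with $Y = \cG(k,\sF)$ and $Z = AG(k,\sF,\langle,\rangle|_\sF) \subseteq \cG(k,\sF)$ — whose codimension in $\cG(k,\sF)$ is $\binom{k}{2} - \binom{k-\delta}{2}$ by the fiber-dimension formula — and $Y' = \cG(k,\sF\cap\sG) \to \cG(k,\sF)$ is a closed immersion, wait, that goes the wrong direction too. The genuinely correct setup is: $\cG(k,\sF\cap\sG) = \cG(k,\sG) \cap AG(k,\sF,\langle,\rangle|_\sF)$ inside $\cG(k,\sE)$, and one applies Lemma \ref{lem:pullback-codim} to the map $f: \cG(k,\sF) \to \cG(k,\sE)$ with $Z = \cG(k,\sG) \cap \cG(k,\sF)$... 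The arithmetic forces the combination $k(2r-s-t) - \binom{k-\delta}{2}$, which is $k(r-t)$ (codim of $\cG(k,\sG)$) plus $k(r-s)$ plus [$\binom{k}{2}$ from $AG$ codim] minus $\binom{k}{2}$, so the $\binom{k}{2}$ contributions must cancel — meaning $Z$ should be taken with the alternating structure \emph{and} the $\sG$-condition imposed together, so that only one $\binom{k}{2}$ appears, subtracted down to $\binom{k-\delta}{2}$.

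The main obstacle, then, is purely organizational rather than substantive: arranging the fibered-product presentation of $\cG(k,\sF\cap\sG)$ so that Proposition \ref{prop:alt-grass} is invoked exactly once, with $\sF$ (carrying the degeneracy) playing the role of ``the bundle,'' and $\sG$'s isotropy being used only to guarantee the intersection is isotropic. Concretely I would: (1) form $\cG(k,\sG)$, smooth over $\cX$; (2) pull the bundle $\sF$ back to $\cG(k,\sG)$ and note that the universal subbundle $\sU \subseteq \sG$ lies in $\sF$ iff $\sU \subseteq \sF \cap \sG$, and when it does it is isotropic; (3) present the locus where $\sU \subseteq \sF$ as $f^{-1}$ of the alternating Grassmannian $AG(k,\sF,\langle,\rangle|_\sF) \subseteq \cG(k,\sE)$ under $f: \cG(k,\sG) \hookrightarrow \cG(k,\sE)$ — this works because a $k$-subbundle of $\sG$ is isotropic for $\langle,\rangle$ automatically, so ``lies in $\sF$'' and ``lies in $AG$ of $\sF$'' coincide once we are in $\cG(k,\sG)$, modulo checking the scheme structures agree (they do, both being pullbacks of the same incidence condition $\sU \hookrightarrow \sF$); (4) apply Lemma \ref{lem:pullback-codim} with $Y = \cG(k,\sE)$, $Y' = \cG(k,\sG)$, $Z = AG(k,\sF,\langle,\rangle|_\sF)$, getting codimension of $\cG(k,\sF\cap\sG)$ in $\cG(k,\sG)$ at most $\operatorname{codim}_{\cG(k,\sE)} Z$; (5) add codimensions. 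For the very last clause — every such subbundle intersects the degeneracy subbundle of $\sF$ in rank $\geq k-\delta$ — this is immediate from the corresponding assertion in Proposition \ref{prop:alt-grass} applied to $AG(k,\sF,\langle,\rangle|_\sF)$, since every point of $\cG(k,\sF\cap\sG)$ maps to a point of that alternating Grassmannian. I expect the finite-type/universally-catenary and smooth-point hypotheses of Lemma \ref{lem:pullback-codim} in the stacky setting to require a sentence of justification referencing Appendix \ref{sec:codim-stacks}, but no real difficulty.
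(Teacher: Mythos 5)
Your overall strategy (reduce to schemes via a smooth cover, invoke Proposition \ref{prop:alt-grass}, feed everything into Lemma \ref{lem:pullback-codim}, and get the degeneracy clause from the containment in the alternating Grassmannian) is in the spirit of the paper's proof, but your final concrete plan, steps (1)--(5), does not prove the stated bound. With $Y=\cG(k,\sE)$, $Y'=\cG(k,\sG)$ and $Z=AG(k,\sF,\langle,\rangle)$, the lemma bounds the codimension of $\cG(k,\sF\cap\sG)$ in $\cG(k,\sG)$ by $\codim_{\cG(k,\sE)}AG(k,\sF,\langle,\rangle)=k(r-s)+\binom{k}{2}-\binom{k-\delta}{2}$, and adding the codimension $k(r-t)$ of $\cG(k,\sG)$ gives $k(2r-s-t)+\binom{k}{2}-\binom{k-\delta}{2}$ --- exactly the weaker bound you flagged in your first attempt. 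You correctly diagnosed that the two $\binom{k}{2}$'s ``must cancel,'' but you never supply a mechanism for the cancellation: saying that $Z$ should carry ``the alternating structure and the $\sG$-condition imposed together'' is not an argument, and as written step (4) reverts verbatim to the setup that loses $\binom{k}{2}$. Swapping the roles of $\sF$ and $\sG$ in the same scheme of argument does not help either; any application of Lemma \ref{lem:pullback-codim} in which the $\sG$-containment is imposed inside the full Grassmannian charges the full $k(r-t)$ for it and cannot reach the claimed bound.

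The missing idea is that the condition of lying in $\sG$ must be imposed inside the \emph{symplectic} Grassmannian: because $\sG$ is isotropic, the locus of rank-$k$ isotropic subbundles contained in $\sG$ has codimension only $k(r-t)-\binom{k}{2}$ in $SG(k,\sE,\langle,\rangle)$ (fiberwise, a $G(k,t)$ of dimension $k(t-k)$ sits inside a symplectic Grassmannian of dimension $k(r-k)-\binom{k}{2}$). The paper realizes this by mapping $AG(k,\sF,\langle,\rangle)$ to $SG(k,\sE,\langle,\rangle)\times_X SG(t,\sE,\langle,\rangle)$ (the second factor via the pullback of $\sG$) and applying Lemma \ref{lem:pullback-codim} with $Z$ the incidence correspondence $I$, which is a $G(k,t)$-bundle over $SG(t,\sE,\langle,\rangle)$, hence smooth over $X$ of pure codimension $k(r-t)-\binom{k}{2}$ in the product; the preimage is exactly $G(k,\sF\cap\sG)$, and adding (by catenariness) the codimension $k(r-s)+\binom{k}{2}-\binom{k-\delta}{2}$ of the components of $AG(k,\sF,\langle,\rangle)$ in $G(k,\sE)$ produces the cancellation and the bound $k(2r-s-t)-\binom{k-\delta}{2}$. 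Your stack-to-scheme reduction and the final clause about intersection with the degeneracy subbundle are fine, but without routing the $\sG$-condition through the symplectic Grassmannian the main codimension estimate falls short by $\binom{k}{2}$.
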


Note that $\cG(k, \sF \cap \sG)$ is empty unless $k \leq t$, $s \leq r$, 
$2t \leq r$. One can check that these inequalities imply that the asserted 
codimension bound is in fact positive.

\begin{proof} 
We first prove the desired bound in the case that $\cX=X$ is a
(universally catenary) scheme.
Now, since every subbundle of $\sG$ is necessarily isotropic,
we can represent $G(k,\sF \cap \sG)$ as the locus of isotropic subbundles
of $\sF$ of rank $k$ which are contained in $\sG$. We thus start with
$AG(k,\sF,\langle,\rangle)$, which we claim has every component of 
codimension equal to 
$k(r-s)+\binom{k}{2}-\binom{k-\delta}{2}$
in $G(k,\sE)$. 
Indeed, by Proposition \ref{prop:alt-grass} we know every component of 
$AG(k,\sF,\langle,\rangle)$ is supported over a generic point of $X$,
so it is enough to carry out the computation in the generic fiber, and
the formula for the dimension of fibers then proves our claim.

Next, we note that $AG(k,\sF,\langle,\rangle)$ has a natural map to
$SG(k,\sE,\langle,\rangle)$ induced by $\sF \hookrightarrow \sE$, and
the pullback of $\sG$ to $AG(k,\sF,\langle,\rangle)$ also induces a 
natural map to $SG(t,\sE,\langle,\rangle)$. The locus $G(k,\sF\cap \sG)$
of interest is precisely the preimage in $AG(k,\sF,\langle,\rangle)$ of
the incidence correspondence 
$I \subseteq SG(k,\sE,\langle,\rangle)\times_X SG(t,\sE,\langle,\rangle)$.
The second projection makes $I$ into a $G(k,t)$-bundle over 
$SG(t,\sE,\langle,\rangle)$, so is smooth of relative dimension $k(t-k)$. On
the other hand, $SG(t,\sE,\langle,\rangle)$ is itself smooth of relative
dimension $t(r-t)-\binom{t}{2}$ over $X$, so we conclude that $I$ is smooth 
of relative dimension $k(t-k)+t(r-t)-\binom{t}{2}$ over $X$. Since 
$SG(k,\sE,\langle,\rangle)$ is smooth of relative dimension
$k(r-k)-\binom{k}{2}$, we have that $I$ has pure codimension
$k(r-t)-\binom{k}{2}$ in
$SG(k,\sE,\langle,\rangle)\times_X SG(t,\sE,\langle,\rangle)$.
Thus, by Lemma \ref{lem:pullback-codim} (using that $G(k,\sE)$ is catenary
to obtain additivity of codimension) we obtain the desired bound.

We now prove the general statement. Let $\cY$ be the Cohen-Macaulay 
algebraic stack over which $\cX$ has finite type, and
let $X \to \cX$ be a smooth cover of $\cX$ by a scheme $X$ which factors
through a smooth cover of $\cY$. Then $X$ is
necessarily universally catenary.
Let $\sE_X,\sF_X,\sG_X$ be the pullbacks of $\sE,\sF,\sG$ to $X$. Then
$\langle,\rangle$ pulls back to a symplectic form $\langle,\rangle_X$ on
$\sE_X$, and we have the identities 
$G(k,\sF_X\cap\sG_X)=\cG(k,\sF\cap\sG) \times_{\cX} X$ and
$G(k,\sE_X)=\cG(k,\sE) \times_{\cX} X$. Let $\cZ$ be an irreducible
component $\cG(k,\sF\cap\sG)$; by definition, we want to prove that
$Z=\cZ \times_{\cX} X$ has codimension at most
$k(2r-s-t)-\binom{k-\delta}{2}$ in
$G(k,\sE_X)$. But by Lemma \ref{lem:irred-cover}, every irreducible component
of $Z=\cZ \times_{\cX} X$ is an irreducible component of 
$G(k,\sF_X \cap\sG_X)$, so we conclude the desired codimension bound from 
the case of schemes.

Finally, the assertion that every point of $\cG(k,\sF\cap\sG)$ corresponds
to a bundle intersecting the degeneracy subbundle of $\sF$ with rank at 
least $k-\delta$ follows from Proposition \ref{prop:alt-grass}, 
since $\cG(k,\sF \cap \sG) \subseteq \cAG(k,\sF)$.
\end{proof}

\section{Multiply Symplectic Grassmannians}

Let $X$ be a scheme, and $\sE$ a vector bundle on $X$ of rank $r$.
Suppose we are given a collection
$$\underline{\langle,\rangle}=
\{\langle,\rangle_1, \dots, \langle, \rangle_m\}$$
of symplectic forms on $\sE$.
Then we make the following definition:

\begin{defn} Given $k<r$, we have the {\bf multiply symplectic Grassmannian}
$MSG(k,\sE,\underline{\langle,\rangle})$ defined as the closed 
subscheme of $G(k,\sE)$ determined by subbundles which are simultaneously
isotropic for every $\langle,\rangle_i \in \underline{\langle,\rangle}$. 
\end{defn}

Clearly, a subspace gives a point in 
$MSG(k,\sE,\underline{\langle,\rangle})$ if and only if it is isotropic
for an arbitrary linear combination of the $\langle,\rangle_i$.

A phenomenon which is not an issue in the classical case of a single form
is that the geometry of a multiply symplectic Grassmannian can depend 
radically on the relationship between the forms. In the classical case 
that $X$ is a point, if the forms are
sufficiently general, the multiply symplectic Grassmannian has
codimension $m\binom{k}{2}$ in $G(k,\sE)$. However, the case of interest
for us is not completely general, so we have to give a closer analysis.
In the case $m=2$, we find that the key condition for good behavior is
that as we vary over all linear combinations of the $\langle,\rangle_i$, 
the rank of the pairing on the subspace in question should not drop by more 
than $1$. More precisely, our main result is the following:

\begin{thm}\label{thm:dsg-crit} In the case $m=2$, suppose that $X$ is
regular. Given a field $K$ extending $F$, and a $K$-valued point $x$ of $X$,
a simultaneously isotropic subspace $V \subseteq \sE|_x$ corresponds to
a point of the doubly symplectic Grassmannian
$MSG(k,\sE,\underline{\langle,\rangle})$ which is smooth of
codimension $2\binom{k}{2}$ inside $G(k,\sE)$ if and only if for
every nonzero $K$-linear combination $\langle,\rangle$ of the
$\langle,\rangle_i|_x$,
there does not exist any $2$-dimensional subspace $V' \subseteq V$
for which the induced pairing
$$\langle,\rangle:V' \times \sE|_x/V \to F$$
vanishes identically.
\end{thm}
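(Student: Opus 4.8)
The plan is to analyze $MSG(k,\sE,\underline{\langle,\rangle})$ as a scheme-theoretic intersection inside $G(k,\sE)$ and compute its tangent space at the point $[V]$ via deformation theory. Recall that for a single symplectic form $\langle,\rangle_i$, the tangent space to $G(k,\sE)$ at $[V]$ is $\Hom(V,\sE|_x/V)$, and the condition that a first-order deformation $\phi \in \Hom(V,\sE|_x/V)$ preserves isotropy for $\langle,\rangle_i$ is that the bilinear form $(v,w) \mapsto \langle \phi(v), w\rangle_i + \langle v, \phi(w)\rangle_i$ on $V \times V$ vanishes; equivalently, the image of $\phi$ under the map $\Hom(V,\sE|_x/V) \to \Hom(V, V^*) = V^* \otimes V^*$ induced by $\langle,\rangle_i:\sE|_x/V \to V^*$ (using nondegeneracy of $\langle,\rangle_i$ and isotropy of $V$) lands in the subspace $\wedge^2 V^* \subseteq V^* \otimes V^*$ of alternating forms. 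So the Zariski tangent space to $MSG(k,\sE,\underline{\langle,\rangle})$ at $[V]$ is the kernel of the map
$$\mu = (\mu_1,\mu_2): \Hom(V,\sE|_x/V) \longrightarrow \wedge^2 V^* \oplus \wedge^2 V^*,$$
where $\mu_i$ is composition with $\langle,\rangle_i$ followed by antisymmetrization. Since $\wedge^2 V^*$ has dimension $\binom{k}{2}$, smoothness of codimension $2\binom{k}{2}$ at $[V]$ is equivalent to surjectivity of $\mu$: indeed if $\mu$ is surjective the tangent space has the right dimension $k(r-k) - 2\binom{k}{2}$, which is a lower bound for the local dimension since $MSG$ is cut out by $2\binom{k}{2}$ equations, forcing smoothness; conversely at a smooth point of that codimension the tangent space has that dimension, forcing surjectivity. (Here I use that $X$ is regular so that, by Lemma \ref{lem:pullback-codim} or directly, no component of $MSG$ has codimension exceeding $2\binom{k}{2}$.)

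The heart of the argument is then the linear-algebra statement: $\mu$ is surjective if and only if for every nonzero $K$-linear combination $\langle,\rangle = \lambda_1\langle,\rangle_1 + \lambda_2\langle,\rangle_2$ there is no $2$-dimensional $V' \subseteq V$ on which the induced pairing $V' \times \sE|_x/V \to K$ vanishes. One direction is easy: if such a $V'$ exists, pick $0 \neq \psi \in \wedge^2 V^*$ supported on $V'$ (i.e. $\psi$ is a nonzero decomposable form pairing the two basis vectors of $V'$), and observe that $\lambda_1\psi \oplus \lambda_2\psi$ is not in the image of $\mu$ — because any $\phi$ with $\mu_i(\phi) = \lambda_i \psi$ would, after restricting to $V'$ and pairing against $\sE|_x/V$, produce a contradiction with the vanishing of $\langle,\rangle$ on $V' \times \sE|_x/V$; concretely $\mu_1(\phi)$ and $\mu_2(\phi)$ restricted to $V'$ are both determined by $\phi|_{V'}$ composed with $\langle,\rangle_1|_{V'}$ and $\langle,\rangle_2|_{V'}$ respectively, and the hypothesis says $\lambda_1\langle,\rangle_1|_{V'} + \lambda_2\langle,\rangle_2|_{V'} = 0$ as a map $V' \to (\sE|_x/V)^*$, which forces $\lambda_1 \mu_1(\phi)|_{V'} + \lambda_2\mu_2(\phi)|_{V'}$ to lie in the image of something that cannot be the nonzero form $(\lambda_1^2+\lambda_2^2)\psi|_{V'}$ — I will need to set this up carefully but it is a direct computation.

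The substantial direction is the converse: assuming $\mu$ is \emph{not} surjective, produce a nonzero linear combination and a bad $2$-plane $V'$. This is where I expect the main obstacle to lie. The strategy is to pass to the cokernel: $\coker \mu$ is a quotient of $\wedge^2 V^* \oplus \wedge^2 V^*$, and dualizing, a nonzero element of $(\coker\mu)^* \subseteq \wedge^2 V \oplus \wedge^2 V$ is a pair $(\alpha_1,\alpha_2)$ of alternating forms on $V^*$ — equivalently, antisymmetric maps $V^* \to V$ — that annihilates the image of $\mu$. Writing out what "annihilates $\im\mu$" means, pairing against an arbitrary $\phi \in \Hom(V, \sE|_x/V)$, gives the relation $\sum_i \langle \alpha_i, \mu_i(\phi)\rangle = 0$ for all $\phi$, which translates (after unwinding the definition of $\mu_i$ via $\langle,\rangle_i$) into the statement that a certain map built from $\alpha_1,\alpha_2$ and the two forms into $\Hom(\sE|_x/V, V)$ vanishes. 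The goal is to show that such a nonzero pair $(\alpha_1,\alpha_2)$ forces the existence of $V'$: I anticipate one reduces to the case where $\alpha_1,\alpha_2$ have a common kernel or, better, are simultaneously of rank $2$ with the same image, using the classification of pencils of alternating forms (Kronecker's theory of pencils of skew matrices) — the rank-$2$ pieces of the pencil $\{\lambda_1\alpha_1 + \lambda_2\alpha_2\}$ single out the $2$-plane $V'$ and the linear combination. Making the bookkeeping between the pencil $\{\alpha_i\}$ on $V^*$ and the pencil $\{\langle,\rangle_i\}$ on $V \times \sE|_x/V$ precise — in particular matching up a minimal-rank element of one pencil with a degenerate $2$-plane for the other — is the delicate step, and I would treat it by choosing bases adapted to the Kronecker normal form of the pencil $\langle,\rangle_i|_{V \times \sE|_x/V}$ and reading off both the obstruction and the witnessing $V'$ directly.
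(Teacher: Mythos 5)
Your roadmap matches the paper's in broad strokes: both reduce the smoothness statement to a transversality/surjectivity statement on tangent spaces at the point $[V]$, identify the tangent space of each $SG(k,\sE,\langle,\rangle_i)$ inside $\Hom(V,\sE|_x/V)$ via the condition that $\psi_i\circ\vp$ be symmetric (this is exactly Lemma~\ref{lem:sg-tangent}), and recognize that the real content is a linear-algebra dichotomy about when the two sets of $\binom{k}{2}$ conditions are dependent (Proposition~\ref{prop:dsg-main}). Your dualization to a pair of alternating $2$-tensors $(\alpha_1,\alpha_2)\in\wedge^2 V\oplus\wedge^2 V$ annihilating $\im\mu$ is likewise the same move the paper makes implicitly through the scalars $c_{i,j},c'_{i,j}$ of Lemma~\ref{lem:dependence-explicit}. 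So the approach is essentially the paper's.

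However, there are two genuine gaps. First, the reduction from a family over $X$ to the fiber at $x$ is not just ``$X$ regular plus a codimension bound.'' When $X$ is positive-dimensional, the tangent space of $MSG(k,\sE,\underline{\langle,\rangle})$ at $[V]$ contains horizontal directions coming from $X$, and one must show that smoothness of the fiber $MSG_x$ of the expected codimension implies flatness of $MSG$ over $X$ at $[V]$. The paper's Lemma~\ref{lem:reduce-to-point} does this via Cohen--Macaulayness of each $SG(k,\sE,\langle,\rangle_i)$ together with the expected-codimension estimate (so $Z_1\cap Z_2$ is Cohen--Macaulay and hence flat over $X$ by Corollary~6.1.5 of \cite{ega42}); your invocation of Lemma~\ref{lem:pullback-codim} alone does not deliver the flatness needed to transfer smoothness from fiber to total space. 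Second, and more substantially, the hard direction of Proposition~\ref{prop:dsg-main} is left entirely as an anticipation, and the anticipated shape is not quite right. The nonzero annihilating pair $(\alpha_1,\alpha_2)$ need not consist of rank-$2$ forms (nor be reducible to that case by an obvious argument), so ``the rank-$2$ pieces of the pencil $\{\lambda_1\alpha_1+\lambda_2\alpha_2\}$'' do not directly single out the witnessing $V'$. What the paper actually does is normalize $\psi_1$ to the standard projection, put the matrix $A$ of $\psi_2$ into (generalized) Jordan normal form, and then run a concrete induction on the indices of the nonzero coefficients $c'_{i,j}$ (Lemmas~\ref{lem:technical-first} and~\ref{lem:technical-second}); the upshot is that a suitable row vector $v_i=(c'_{i,1},\dots,c'_{i,k})$ lies in $\ker(A-\lambda_i I_{k,n})^T$, and nonvanishing of $c'_{i,j}$ with $i,j$ in distinct Jordan blocks of the same eigenvalue $\lambda_i$ produces two independent such vectors, giving the $2$-plane $V'=\ker(A-\lambda_i I_{k,n})^T$ with $\lambda_1=1,\lambda_2=-\lambda_i$ the linear combination. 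This bookkeeping is the technical core of the theorem and cannot be waved away with an appeal to Kronecker's classification; you would need to carry out a calculation of comparable specificity.
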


Because the proof of the theorem is rather technical, we first draw the
desired consequence for our applications.

\begin{cor}\label{cor:iso-intersect-double} Suppose $\sE$ is a vector bundle 
of rank $r$ on a regular algebraic stack $\cX$, and 
$\langle,\rangle_i$
for $i=1,2$ are symplectic forms on $\sE$. Let $\sF$ and 
$\sG$ be subbundles of $\sE$ of ranks $s$ and $t$, both isotropic with
respect to both of the $\langle,\rangle_i$. Let 
$\cG(k,\sF\cap\sG)$ denote the closed substack of $\cG(k,\sE)$ parametrizing 
subbundles of $\sE$ contained in both $\sF$ and $\sG$.  
Suppose that for field $K$ extending $F$, and some $K$-valued point $x$ of 
$\cX$, the subspace $\sG|_x \subseteq \sE|_x$
has the property that for all $2$-dimensional subspaces $V' \subseteq \sG|_x$,
the two
pairings $V' \times \sE|_x \to F$ induced by the $\langle,\rangle_i$ are 
linearly independent. Then for any $k$-dimensional subspace 
$V\subseteq \sF|_x \cap \sG|_x \subseteq \sE|_x$, every  
component of $\cG(k,\sF\cap\sG)$ passing
through the point corresponding to $V \subseteq \sE|_x$ has codimension at 
most
$$k(2r-s-t)-2\binom{k}{2}$$ 
in $\cG(k,\sE)$.
\end{cor}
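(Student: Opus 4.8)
The plan is to mirror the structure of the proof of Corollary~\ref{cor:iso-intersect}, replacing the single-symplectic-form input (Proposition~\ref{prop:alt-grass} together with Lemma~\ref{lem:pullback-codim}) by the doubly-symplectic input of Theorem~\ref{thm:dsg-crit}. First I would reduce to the case where $\cX=X$ is a regular scheme, by the same smooth-cover argument used at the end of the proof of Corollary~\ref{cor:iso-intersect}: pull $\sE,\sF,\sG$ and the two forms back along a smooth cover $X\to\cX$ by a scheme (which is again regular), use that $G(k,\sF_X\cap\sG_X)=\cG(k,\sF\cap\sG)\times_\cX X$ and $G(k,\sE_X)=\cG(k,\sE)\times_\cX X$, and invoke Lemma~\ref{lem:irred-cover} so that each component of the pullback of a given component $\cZ\subseteq\cG(k,\sF\cap\sG)$ is a component of $G(k,\sF_X\cap\sG_X)$ passing through (a point over) $V$. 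Note also that since $\sF$ is isotropic for both forms, we do not need the uniform-degeneracy hypothesis from Corollary~\ref{cor:iso-intersect}; every subbundle of $\sG$ and in fact of $\sF$ is automatically simultaneously isotropic, so $\delta$ drops out.

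On $X$ regular, I would realize $G(k,\sF\cap\sG)$ as a preimage just as before. Let $I\subseteq MSG(k,\sE,\underline{\langle,\rangle})\times_X MSG(t,\sE,\underline{\langle,\rangle})$ be the incidence correspondence of pairs (rank-$k$ simultaneously isotropic, rank-$t$ simultaneously isotropic, first contained in second); the second projection exhibits $I$ as a $G(k,t)$-bundle over $MSG(t,\sE,\underline{\langle,\rangle})$, hence smooth of relative dimension $k(t-k)$ over that base wherever the latter is smooth. The subtlety is that $MSG(t,\sE,\underline{\langle,\rangle})$ and $MSG(k,\sE,\underline{\langle,\rangle})$ need not be smooth or equidimensional globally; this is exactly where the hypothesis on $x$ enters. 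The assumption is that every $2$-plane $V'\subseteq\sG|_x$ gives linearly independent pairings $V'\times\sE|_x\to F$; since the two forms vanish on $\sG$, this is equivalent (via the quotients $\sE|_x\to\sE|_x/V$ for the relevant $V$) to the Theorem~\ref{thm:dsg-crit} criterion being met at the point $\sG|_x\in MSG(t,\sE,\underline{\langle,\rangle})$ and at every $k$-dimensional $V\subseteq\sG|_x\cap\sF|_x$ in $MSG(k,\sE,\underline{\langle,\rangle})$: for such $V$, a $2$-plane $V'\subseteq V\subseteq\sG|_x$ with the pairing $V'\times\sE|_x/V\to F$ vanishing would a fortiori make $V'\times\sE|_x\to F$ land in the pairings attached to a single combination, contradicting linear independence. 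Hence by Theorem~\ref{thm:dsg-crit}, $MSG(t,\sE,\underline{\langle,\rangle})$ is smooth of codimension $2\binom{t}{2}$ in $G(t,\sE)$ at $\sG|_x$, and $MSG(k,\sE,\underline{\langle,\rangle})$ is smooth of codimension $2\binom{k}{2}$ in $G(k,\sE)$ at every point $V$ under consideration. Restricting to the open (in each) smooth loci containing these points, I get that $I$ is smooth of relative dimension $k(t-k)+t(r-t)-2\binom{t}{2}$ over $X$ near the relevant points, while $MSG(k,\sE,\underline{\langle,\rangle})$ is smooth of relative dimension $k(r-k)-2\binom{k}{2}$, so $I$ has pure codimension $k(r-t)-2\binom{k}{2}$ in $MSG(k,\sE,\underline{\langle,\rangle})\times_X MSG(t,\sE,\underline{\langle,\rangle})$ along these loci.

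Then $G(k,\sF\cap\sG)$ is the preimage in $AG(k,\sF,\langle,\rangle)$ — but here $\sF$ is doubly isotropic so in fact $G(k,\sF\cap\sG)=MSG(k,\sF,\underline{\langle,\rangle}|_\sF)\times$(incidence with $\sG$) — more cleanly: $G(k,\sF\cap\sG)$ is the preimage of $I$ under the natural map $G(k,\sF)\to MSG(k,\sE,\underline{\langle,\rangle})\times_X MSG(t,\sE,\underline{\langle,\rangle})$ sending a rank-$k$ subbundle $\sF'\subseteq\sF$ to $(\sF',\sG)$ (using that $\sF'$ and $\sG$ are automatically simultaneously isotropic). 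Since $G(k,\sF)$ has pure codimension $k(r-s)$ in $G(k,\sE)$ and is smooth over $X$, I apply Lemma~\ref{lem:pullback-codim} with $X$ regular to this map and the irreducible codimension-$k(r-t)-2\binom{k}{2}$ locus (any irreducible component of $I$, each of which meets the smooth part by the local computation) — noting the target point lies in the smooth part of $I$ — to conclude that every component of $G(k,\sF\cap\sG)$ through $V$ has codimension at most $k(r-t)-2\binom{k}{2}$ in $G(k,\sF)$, hence at most $k(r-s)+k(r-t)-2\binom{k}{2}=k(2r-s-t)-2\binom{k}{2}$ in $G(k,\sE)$, using catenariness of $G(k,\sE)$ for additivity. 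I expect the main obstacle to be the bookkeeping around smoothness being only local: Lemma~\ref{lem:pullback-codim} wants an irreducible $Z$ containing a smooth point, so I must be careful to take $Z$ to be a component of $I$, verify (from the fiberwise dimension count above) that it meets the open smooth locus, and check that the component of $G(k,\sF\cap\sG)$ we care about maps into it — i.e.\ that the relevant point $(V,\sG|_x)$ of $I$ is a smooth point — before the codimension-transfer applies; the rest is the same formal manipulation as in Corollary~\ref{cor:iso-intersect}.
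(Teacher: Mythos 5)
Your proposal is correct and follows the same route as the paper's proof: reduce to a regular scheme via a smooth cover (using Lemma~\ref{lem:irred-cover}), realize $G(k,\sF\cap\sG)$ as the preimage of the incidence correspondence $I$ under the natural map $G(k,\sF)\to MSG(k,\sE,\underline{\langle,\rangle})\times_X MSG(t,\sE,\underline{\langle,\rangle})$, use Theorem~\ref{thm:dsg-crit} with the hypothesis on $\sG|_x$ (together with the observation that any $2$-plane $V'\subseteq V$ also sits in $\sG|_x$) to get smoothness of both multiply symplectic Grassmannians at the relevant point and hence the codimension of $I$ there, and conclude via Lemma~\ref{lem:pullback-codim} and catenariness. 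The one wording slip is the parenthetical claim that \emph{every} component of $I$ meets the smooth locus; this need not hold, but your argument never uses it, since you correctly restrict attention to the component of $I$ through the smooth point $(V,\sG|_x)$, which is exactly what Lemma~\ref{lem:pullback-codim} requires.
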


\begin{proof} First consider the case that $\cX=X$ is a (regular) 
scheme.  We consider $G(k,\sF)$ as a closed subscheme of
$G(k,\sE)$, of codimension $k(r-s)$. We note that $G(k,\sF)$ has a natural
map to $MSG(k,\sE,\underline{\langle,\rangle}) \times_X 
MSG(t,\sE,\underline{\langle,\rangle})$,
with the map to $MSG(t,\sE,\underline{\langle,\rangle})$ induced by 
$\sG$ and the
structure map to $X$. Moreover, $G(k,\sF\cap\sG)$ is the preimage of
the incidence correspondence $I$ under this map. We claim that 
$MSG(k,\sE,\underline{\langle,\rangle}) \times_X 
MSG(t,\sE,\underline{\langle,\rangle})$ 
is smooth at the image of the point corresponding to $V \subseteq \sE|_x$,
and that $I$ has pure codimension $k(r-t)-2\binom{k}{2}$ at this point.
Smoothness in $MSG(t,\sE,\underline{\langle,\rangle})$ follows directly
from Theorem \ref{thm:dsg-crit} and our hypotheses. On the other hand,
because $V \subseteq \sG|_x$, any $2$-dimensional subspace of $V$ is
also contained in $\sG|_x$, so Theorem \ref{thm:dsg-crit} also implies
smoothness of $MSG(k,\sE,\underline{\langle,\rangle})$. The codimension
statement for $I$ is obtained by arguing as in Corollary
\ref{cor:iso-intersect}. We thus conclude the 
desired statement from Lemma \ref{lem:pullback-codim}. Again arguing
as in Corollary \ref{cor:iso-intersect}, we deduce the stack case from
the scheme case by passing to a smooth cover.
\end{proof}

The remainder of this section is devoted to proving Theorem 
\ref{thm:dsg-crit}, and will not be used elsewhere. The following lemma is 
used to reduce Theorem \ref{thm:dsg-crit} to the case that $X$ is a point.

\begin{lem}\label{lem:reduce-to-point} Suppose that $X$ is a regular
scheme, $f:Y \to X$ is smooth, and $Z=Z_1 \cap Z_2$ is the intersection
of closed subschemes $Z_i \subseteq Y$. Given $z \in Z$, suppose that
the $Z_i$ are Cohen-Macaulay at $z$, of pure codimension $d_i$ in $Y$, 
for $i=1,2$. If the image of $z$ in $X$ is $x$, we then have that 
$z$ is a smooth
point of $Z$, with $\codim_Y Z=d_1+d_2$ at $z$, if and only if 
$z$ is a smooth point of $Z_x$, with $\codim_{Y_x} Z_x=d_1+d_2$ at $z$.
\end{lem}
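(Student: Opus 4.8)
The plan is to reduce everything to a purely local statement about the local ring $A=\cO_{Y,z}$ and its quotients, exploiting that the fiber $Y_x$ is obtained by killing a regular sequence. Since $f:Y\to X$ is smooth and $X$ is regular (hence $Y$ is regular, so in particular Cohen-Macaulay), and since $x\in X$ is a point of the regular scheme $X$, the maximal ideal $\fm_x\subseteq\cO_{X,x}$ is generated by a regular sequence of length $n:=\dim\cO_{X,x}$. Pulling this sequence back along $f^\#$ gives elements $t_1,\dots,t_n\in A$ whose images cut out the fiber $Y_x$ locally at $z$; because $f$ is flat (smoothness), these pullbacks form a regular sequence on $A$, and they remain a regular sequence on each $\cO_{Z_i,z}$ precisely when $Z_i$ is flat over $X$ at $z$ — which, for a Cohen-Macaulay $Z_i$ of pure codimension $d_i$ over a regular base, is equivalent to $Z_i$ having fibers of the expected dimension near $z$; I will need to be slightly careful here and either invoke miracle flatness or argue directly with the dimension bookkeeping below. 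Let me record the ambient numerics: $\dim A=\dim\cO_{X,x}+\dim\cO_{Y_x,z}$, and likewise after quotienting.

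Here are the steps in order. First I would observe that $\codim_Y Z=d_1+d_2$ at $z$ together with $Z_i$ of pure codimension $d_i$ forces, by the standard inequality $\codim(Z_1\cap Z_2)\le d_1+d_2$ and Cohen-Macaulayness, that $Z=Z_1\cap Z_2$ is itself Cohen-Macaulay at $z$ of codimension exactly $d_1+d_2$ (a proper intersection of Cohen-Macaulay subschemes in a Cohen-Macaulay ambient is Cohen-Macaulay). Second, I would show the fibers $Z_{i,x}$ are Cohen-Macaulay of codimension $d_i$ in $Y_x$ at $z$: this is exactly the assertion that the $t_j$ restrict to a regular sequence on $\cO_{Z_i,z}$, which follows because $Z_i$ Cohen-Macaulay and $Y$ (equiv.\ $X$) regular gives flatness of $Z_i\to X$ near $z$ (miracle flatness: a CM scheme mapping to a regular scheme is flat iff fiber dimensions are locally constant and equal to the generic expectation, and here purity of codimension $d_i$ pins this down). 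Cutting a CM ring by a regular sequence preserves CM-ness and drops dimension by the length of the sequence, so $\codim_{Y_x}Z_{i,x}=d_i$. Third — the heart of the matter — I would relate smoothness of $Z$ at $z$ to smoothness of $Z_x$ at $z$. For the forward direction, if $Z$ is smooth at $z$ then $Z\to X$ is smooth at $z$ (a smooth scheme over a regular base that is flat — which it is, being CM over regular with constant fiber dimension — with smooth total space has smooth fibers, via the Jacobian criterion or via the conormal sequence), whence $Z_x$ is smooth at $z$, and the codimension count transports. For the reverse, suppose $Z_x$ is smooth of codimension $d_1+d_2$ in $Y_x$ at $z$; then $\cO_{Z,z}$ is flat over $\cO_{X,x}$ (again miracle flatness, since $\cO_{Z,z}$ is CM and its fiber $\cO_{Z_x,z}$ is regular of the expected dimension) with regular fiber over the regular local ring $\cO_{X,x}$, and a flat local homomorphism with regular source and regular closed fiber has regular target (EGA~IV, 6.5.2, or \cite{ega42}); moreover $Z\to X$ is then smooth at $z$ because it is flat with smooth fibers, so $Z$ is smooth over $F$ at $z$ because $X$ is.

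The step I expect to be the main obstacle is the careful handling of flatness: nothing in the hypotheses literally says $Z_i$ or $Z$ is flat over $X$ near $z$, so I have to \emph{deduce} it from Cohen-Macaulayness plus the codimension/purity hypotheses via the miracle-flatness criterion, and I must make sure the fiber-dimension computation is consistent on all three schemes $Z_1,Z_2,Z$ simultaneously (using $\dim\cO_{Z_i,z}=\dim A-d_i$ and the analogous relation on fibers, together with additivity $\dim\cO_{Z,z}=\dim A-d_1-d_2$). Once flatness is in hand, the passage between ``smooth'' and ``regular fiber'' in both directions is standard EGA machinery and the codimension bookkeeping is mechanical. I would also note the degenerate cases ($\cO_{X,x}$ a field, i.e.\ $x$ a generic point, where there is nothing to prove) are automatically covered.
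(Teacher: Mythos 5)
Your framework is the right one---the crux is flatness of $Z$ over $X$ via miracle flatness, followed by the standard ``flat plus smooth fiber $\Rightarrow$ smooth'' criterion---but there are two genuine problems. The first is that your Step~2 claims flatness of the individual $Z_i$ over $X$ from Cohen--Macaulayness and purity of codimension, and this is simply false: with $X=\AA^1_t$, $Y=\AA^2_{t,u}$, the scheme $Z_1=V(tu)$ is CM of pure codimension~$1$ (a divisor in a regular scheme) but is not flat over $X$ at the origin, since the fiber over $t=0$ is the entire $u$-axis, of codimension $0$ rather than $1$ in $Y_0$. Purity of codimension in $Y$ does not control fiber dimensions of $Z_i\to X$. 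Fortunately this claim is also unnecessary: the lemma only requires flatness of $Z$, not of the $Z_i$.

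The second and more serious problem is in the reverse direction: to apply miracle flatness to $Z$ you invoke that $\cO_{Z,z}$ is Cohen--Macaulay, but your Step~1 establishes that only under the hypothesis $\codim_Y Z = d_1+d_2$ at $z$, which in the reverse direction is precisely what needs to be proved. The missing ingredient---which the paper supplies---is the lower bound $\codim_Y Z \geq \codim_{Y_x} Z_x = d_1+d_2$, obtained from the general upper semicontinuity inequality $\dim\cO_{Z,z}\leq \dim\cO_{X,x}+\dim\cO_{Z_x,z}$ together with catenaricity of $Y$. Combined with the Serre bound $\codim_Y Z \leq d_1+d_2$ (which you do state), this gives equality, hence CM-ness of $Z$ at $z$, and only then does miracle flatness apply; the paper then finishes via EGA~IV~17.5.1. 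Your ``obstacle'' paragraph flags the additivity $\dim\cO_{Z,z}=\dim A - d_1 - d_2$ as a concern, but the proposed fix (bookkeeping fiber dimensions of $Z_1,Z_2,Z$ simultaneously) does not close the gap and leans on the false $Z_i$-flatness claim. Lastly, you should read ``$z$ is a smooth point of $Z$'' as meaning $Z$ is smooth over $X$ at $z$, which is what the paper's proof does; this avoids your tacit appeal to $X$ being smooth over $F$, which is not among the hypotheses.
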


\begin{proof} Clearly if $Z$ is smooth over $X$ of the appropriate
codimension, then the same is true for the fiber $Z_x$. Conversely, 
by Theorem 17.5.1 of \cite{ega44}
if $Z_x$ is smooth of codimension $d_1+d_2$ at $z$, it is enough to show
that $Z$ is flat over $X$ at $z$. Because $Y$ is smooth over a regular
scheme, we have $\codim_Y Z \leq d_1+d_2$ at $z$, but because everything is
catenary, the bound on dimension in terms of fibers gives
$\codim_Y Z \geq \codim_{Y_x} Z_x = d_1+d_2$ at $z$, so we conclude that
$Z$ has the expected codimension in $Y$ at $z$, and it follows (see
for instance Lemma 4.5 of \cite{o-h1}) that $Z$ is Cohen-Macaulay at $z$, 
and then from the standard flatness criterion (Corollary 6.1.5 of
\cite{ega42}) that $Z$ is flat over $X$ at $z$.
\end{proof}

The idea of the proof of Theorem \ref{thm:dsg-crit} is to calculate the
tangent space as an intersection of the tangent spaces of the appropriate
symplectic Grassmannians. The first step is the following, which is a 
simple calculation, and left to the reader:

\begin{lem}\label{lem:sg-tangent} Let $E$ be a vector space over a field $K$ 
of dimension $r$, and $\langle,\rangle$ a symplectic form on $E$. Given 
$k\leq \frac{r}{2}$,
and a subspace $V \subseteq E$ of dimension $k$ which is isotropic for
$\langle,\rangle$, under the standard identification of $T_V(G(k,E))$
with $\Hom(V,E/V)$, we have that the tangent space 
$T_V(SG(k,E,\langle,\rangle))$ is identified with 
$$\{\vp \in \Hom(V,E/V): \forall v_1,v_2 \in V, 
\langle v_1, \vp(v_2)\rangle-\langle v_2,\vp(v_1)\rangle=0\}.$$
\end{lem}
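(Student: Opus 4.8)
The plan is to compute the tangent space directly from the defining equations of $SG(k,E,\langle,\rangle)$ inside $G(k,E)$, using the standard identification $T_V(G(k,E))\cong\Hom(V,E/V)$. First I would recall that a tangent vector to $G(k,E)$ at $V$ corresponds to a first-order deformation $V_\epsilon\subseteq E\otimes_K K[\epsilon]/(\epsilon^2)$; concretely, if $\vp\in\Hom(V,E/V)$ is the associated homomorphism, then $V_\epsilon$ is spanned by the vectors $v+\epsilon\tilde\vp(v)$ for $v\in V$, where $\tilde\vp:V\to E$ is any $K$-linear lift of $\vp$ (the choice of lift changes the spanning vectors by elements of $V$, hence does not change $V_\epsilon$). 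The condition that $V_\epsilon$ be a point of $SG(k,E,\langle,\rangle)$ over $K[\epsilon]/(\epsilon^2)$ is exactly that $V_\epsilon$ be isotropic for the $K[\epsilon]/(\epsilon^2)$-bilinear extension of $\langle,\rangle$.

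Next I would unwind that isotropy condition. Since $V$ itself is isotropic, for $v_1,v_2\in V$ we have $\langle v_1,v_2\rangle=0$, and the bilinear extension gives
\[
\langle v_1+\epsilon\tilde\vp(v_1),\,v_2+\epsilon\tilde\vp(v_2)\rangle
=\epsilon\bigl(\langle \tilde\vp(v_1),v_2\rangle+\langle v_1,\tilde\vp(v_2)\rangle\bigr)
\]
modulo $\epsilon^2$. Using skew-symmetry of $\langle,\rangle$ this equals $\epsilon\bigl(\langle v_1,\tilde\vp(v_2)\rangle-\langle v_2,\tilde\vp(v_1)\rangle\bigr)$. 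Because $V$ is isotropic, the pairing $\langle v_i,\tilde\vp(v_j)\rangle$ depends only on the class of $\tilde\vp(v_j)$ in $E/V$, i.e.\ only on $\vp(v_j)$, so the expression $\langle v_1,\vp(v_2)\rangle-\langle v_2,\vp(v_1)\rangle$ is well-defined. Setting it to zero for all $v_1,v_2\in V$ is precisely the asserted description of $T_V(SG(k,E,\langle,\rangle))$. I would also remark that it suffices to check the vanishing on a basis of $V$, and that $SG(k,E,\langle,\rangle)$ is cut out in $G(k,E)$ (locally) by the $\binom{k}{2}$ equations $\langle s_i,s_j\rangle=0$ for a local frame $s_1,\dots,s_k$ of the universal subbundle, so the tangent space is visibly the kernel of the differential of this system of equations, which is the map $\vp\mapsto\bigl(\langle v_i,\vp(v_j)\rangle-\langle v_j,\vp(v_i)\rangle\bigr)_{i<j}$.

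There is no real obstacle here — the computation is elementary and the paper itself flags it as "left to the reader." The only point requiring a word of care is the well-definedness just noted: that replacing a lift $\tilde\vp$ by another lift (differing by a map $V\to V$) does not affect $\langle v_i,\tilde\vp(v_j)\rangle$, which is immediate from isotropy of $V$, together with the symmetric observation that the skew-symmetrized expression is what survives at order $\epsilon$. One should also note in passing that the hypothesis $k\le r/2$ is not needed for the tangent-space identity itself — it is the condition ensuring $SG(k,E,\langle,\rangle)$ is nonempty — so the statement as phrased is harmless.
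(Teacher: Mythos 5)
Your proof is correct, and since the paper explicitly leaves this lemma to the reader as a simple calculation, your dual-numbers computation (expanding $\langle v_1+\epsilon\tilde\vp(v_1),v_2+\epsilon\tilde\vp(v_2)\rangle$ and using isotropy of $V$ for well-definedness) is exactly the intended standard argument. The only small remark: the hypothesis $k\leq \frac{r}{2}$ is automatic for an isotropic subspace of a nondegenerate form, so it is redundant rather than merely a nonemptiness condition, but this does not affect your argument.
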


Note that in the above, the terms $\langle v_1, \vp(v_2)\rangle$ and 
$\langle \vp(v_1),v_2\rangle$ are well-defined because $V$ is assumed
to be isotropic. Indeed, if $V$ is isotropic, then $\langle ,\rangle$
induces a homomorphism $\psi:E/V \to V^*$, and the lemma may be 
rephrased as saying that $\vp \in \Hom(V,E/V)$ is in the tangent space
to $SG(k,E,\langle,\rangle)$ if and only if $\psi \circ \vp$ is self-dual.
Because $\langle,\rangle$ is nondegenerate, as $\vp$ ranges through 
$\Hom(V,E/V)$, the composition $\psi \circ \vp$ ranges over all of
$\Hom(V,V^*)$. Choosing a suitable basis, one then sees that the condition 
that $\psi \circ \vp$ is symmetric imposes $\binom{k}{2}$ linear 
conditions on $\Hom(V,E/V)$, agreeing with our previous codimension 
calculation for symplectic Grassmannians. 

The main point is thus to prove:

\begin{prop}\label{prop:dsg-main} Given vector spaces $V,W$ of dimensions
$k \leq n$, and surjective linear maps
$\psi_1,\psi_2:W \to V^*$, the two systems of $\binom{k}{2}$ linear 
conditions imposed on $\vp \in \Hom(V,W)$ by the condition that 
$\psi_i \circ \vp$ be symmetric are dependent if and only if there
is a $2$-dimensional subspace $V' \subseteq V$ and scalars 
$\lambda_1,\lambda_2$ not both zero, such that for all $v \in V'$ and
$w \in W$, we have $\lambda_1 (\psi_1(w))(v)+\lambda_2(\psi_2(w))(v)=0$.
\end{prop}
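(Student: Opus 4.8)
The plan is to reformulate the problem linear-algebraically so that the ``dependence of the two systems of conditions'' becomes the existence of a nonzero element in a certain intersection, and then to identify that intersection explicitly. Fix bases and identify $\vp\in\Hom(V,W)$ with the data $\psi_i\circ\vp\in\Hom(V,V^*)$; as noted just before the proposition, nondegeneracy (here: surjectivity of each $\psi_i$) means that as $\vp$ ranges over $\Hom(V,W)$, each $\psi_i\circ\vp$ ranges over all of $\Hom(V,V^*)$, but the \emph{pair} $(\psi_1\circ\vp,\psi_2\circ\vp)$ traces out only a subspace $\Lambda\subseteq\Hom(V,V^*)^{\oplus 2}$, namely the image of $\Hom(V,W)\to\Hom(V,V^*)^{\oplus2}$, $\vp\mapsto(\psi_1\circ\vp,\psi_2\circ\vp)$. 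The combined system of $2\binom k2$ conditions cuts out, inside $\Hom(V,W)$, the preimage of $\mathrm{Sym}(V)\oplus\mathrm{Sym}(V)\subseteq\Hom(V,V^*)^{\oplus2}$, where $\mathrm{Sym}(V)\subseteq\Hom(V,V^*)$ denotes the self-dual maps. So the two systems are \emph{independent} (impose $2\binom k2$ conditions) exactly when $\Lambda$ meets $\mathrm{Sym}(V)\oplus\mathrm{Sym}(V)$ transversally, i.e. when $\Lambda+\bigl(\mathrm{Sym}(V)\oplus\mathrm{Sym}(V)\bigr)$ is everything; dually, they are \emph{dependent} exactly when the annihilator $\Lambda^\perp\subseteq(\Hom(V,V^*)^{\oplus2})^*$ meets the annihilator of $\mathrm{Sym}(V)\oplus\mathrm{Sym}(V)$ nontrivially.

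The next step is to make both annihilators concrete. Using the trace pairing $\Hom(V,V^*)\times\Hom(V,V^*)\to K$, $(\alpha,\beta)\mapsto\operatorname{tr}(\alpha\circ\beta^{\vee})$ or equivalently the pairing identifying $\Hom(V,V^*)^*$ with $\Hom(V^*,V)=\Hom(V,V^*)^{\vee}$, the annihilator of $\mathrm{Sym}(V)$ is the space $\mathrm{Alt}(V^*)$ of alternating forms on $V^*$ (skew self-dual maps $V^*\to V$). And the annihilator $\Lambda^\perp$ is computed from the defining surjections: a pair $(B_1,B_2)$ with $B_i:V^*\to V$ lies in $\Lambda^\perp$ iff for all $\vp\in\Hom(V,W)$ we have $\sum_i\operatorname{tr}\bigl((\psi_i\circ\vp)\circ B_i\bigr)=0$, i.e. $\sum_i\operatorname{tr}\bigl(\vp\circ B_i\circ\psi_i\bigr)=0$ for all $\vp$, which since $\vp$ is arbitrary forces $\sum_i B_i\circ\psi_i=0$ as a map $W\to V$. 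Therefore the two systems are dependent iff there exist alternating maps $B_1,B_2:V^*\to V$, not both zero, with $B_1\circ\psi_1+B_2\circ\psi_2=0\in\Hom(W,V)$.

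Finally I would translate this last condition into the geometric statement of the proposition. Given such $(B_1,B_2)$ not both zero, pick $i$ with $B_i\ne0$; an alternating map of rank $\geq1$ has rank $\geq2$, so $\operatorname{im}(B_i)\subseteq V$ is a subspace of dimension $\geq2$, and I would show the required $2$-dimensional $V'\subseteq V$ can be extracted from $\operatorname{im}(B_1)+\operatorname{im}(B_2)$, with the scalars $\lambda_1,\lambda_2$ recording which $B_i$ are nonzero — here one uses that on $V'$, the relation $B_1\psi_1+B_2\psi_2=0$ forces the corresponding combination of the forms $w\mapsto(\psi_i(w))(v)$ to vanish, after dualizing and using that an alternating form is nondegenerate on any minimal subspace supporting it. Conversely, given $V'$, $\lambda_1$, $\lambda_2$ as in the statement, I would build $B_1,B_2$ directly: choose a nonzero alternating form $\beta$ on $V'$ (possible since $\dim V'=2$), extend it by zero to an alternating map $\beta^\sharp:V^*\to V$ killing the annihilator of $V'$ (note $\beta$ lands in $V'\subseteq V$), and set $B_i=\lambda_i\beta^\sharp$; the hypothesis that $\lambda_1(\psi_1(w))(v)+\lambda_2(\psi_2(w))(v)=0$ for $v\in V'$, $w\in W$ says precisely that $B_1\psi_1+B_2\psi_2$ vanishes, because $\beta^\sharp$ only sees the restriction of its input functional to $V'$. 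The main obstacle I anticipate is the bookkeeping in this last translation: being careful that ``rank of an alternating form is even, hence $\geq2$'' is used correctly, that the duality between $\mathrm{Sym}$ and $\mathrm{Alt}$ is set up with the right variances (forms on $V$ versus forms on $V^*$), and that passing between $B_i\circ\psi_i=0$ and the pointwise vanishing of the pairings genuinely produces a \emph{fixed} $2$-dimensional $V'$ rather than merely a nonzero vector — this is why one works with $\operatorname{im}(B_i)$ and its minimal dimension rather than kernels.
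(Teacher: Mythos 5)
Your dual reformulation is correct and is essentially the coordinate-free version of what the paper does in Lemma \ref{lem:dependence-explicit}: a linear dependence among the $2\binom{k}{2}$ conditions is exactly a pair of alternating maps $B_1,B_2\colon V^*\to V$, not both zero, with $B_1\circ\psi_1+B_2\circ\psi_2=0$ (your $B_1,B_2$ are the coefficient matrices $c_{i,j},c'_{i,j}$), and your construction of $(B_1,B_2)$ from a given $(V',\lambda_1,\lambda_2)$ is the easy direction, i.e.\ Lemma \ref{lem:msg-easy}. The problem is that the hard direction --- producing $(V',\lambda_1,\lambda_2)$ from $(B_1,B_2)$ --- is exactly where the content of the proposition lies, and your sketch does not prove it. What has to be shown is that the relation $B_1\psi_1+B_2\psi_2=0$ forces some \emph{fixed} nonzero combination $\lambda_1\psi_1+\lambda_2\psi_2$ to have rank at most $k-2$ (so that $V'$ can be taken inside the annihilator of its image). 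Your recipe ``take $V'$ inside $\operatorname{im}(B_1)+\operatorname{im}(B_2)$, with $\lambda_1,\lambda_2$ recording which $B_i$ are nonzero'' is not an argument and is wrong as stated: already for $\psi_1=\operatorname{id}$, $\psi_2=\lambda\operatorname{id}$ with $\lambda\neq 0,-1$, both $B_i$ are nonzero in any dependence, yet $(\lambda_1,\lambda_2)=(1,1)$ gives a map of full rank; the correct scalars are eigenvalue data (here $(-\lambda,1)$), not indicators of nonvanishing.

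Moreover, no purely formal linear-algebra extraction of the kind you propose can close this gap, because the implication genuinely uses that $F$ is algebraically closed (a standing assumption in the paper that your sketch never invokes). Over $\RR$, take $k=n=4$, $\psi_1=\operatorname{id}$, $\psi_2=A$ with $A$ block-diagonal with two $2\times 2$ rotation blocks $R=\left(\begin{smallmatrix}0&-1\\1&0\end{smallmatrix}\right)$, and $B_2$ the alternating matrix with off-diagonal blocks $\pm S$, $S=\operatorname{diag}(1,-1)$; then $AB_2=-B_2A$ and $A^T=-A$, so $B_1:=-B_2A$ is alternating and $B_1\psi_1+B_2\psi_2=0$, i.e.\ the $2\binom{4}{2}$ conditions are dependent, while $\det(\lambda_1 I+\lambda_2 A)=(\lambda_1^2+\lambda_2^2)^2\neq 0$ for $(\lambda_1,\lambda_2)\neq(0,0)$, so no $2$-dimensional $V'$ exists. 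This is precisely why the paper's proof normalizes $\psi_1=[I\,|\,0]$, puts $\psi_2$ into (generalized) Jordan form using $F=\bar F$, and then carries out the delicate coefficient analysis of Lemmas \ref{lem:technical-first} and \ref{lem:technical-second}, which shows that the nonzero entries of your $B_2$ are constrained to link two Jordan blocks with a common eigenvalue $\lambda$ and thereby produces two independent vectors in $\ker(A-\lambda I_{k,n})^T$. That eigenvalue extraction step is the core of the proof, and it is missing from your proposal.
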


One direction of the proposition is easy, and holds
for any number of linear maps (equivalently, any number of linear forms).
Before giving the relevant statement, we give an explicit description
of what it means to have a linear dependence among the $m \binom{k}{2}$
conditions in question.

\begin{lem}\label{lem:dependence-explicit} Given 
$\psi_1,\dots,\psi_m:W \to V^*$, and $v_1,\dots,v_k$ a basis of $V$,
the $m \binom{k}{2}$ conditions requiring that each $\psi_i \circ \vp$ be 
symmetric are linearly dependent if and only if there 
exist $c_{i,j,\ell}$ not all $0$,
for $i=1,\dots,m$ and $1 \leq j <\ell \leq k$, such that for all 
$\vp \in \Hom(V,W)$ we have
$$\sum_{i,j,\ell} 
c_{i,j,\ell}((\psi_i(\vp(v_j)))(v_{\ell})-(\psi_i(\vp(v_{\ell})))(v_j))=
0.$$
\end{lem}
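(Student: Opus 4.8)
The plan is to unwind both sides of the claimed equivalence into concrete linear-algebra statements about the space $\Hom(V,W)$ and then observe they match.

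First I would fix the basis $v_1,\dots,v_k$ of $V$ and make the identification $\Hom(V,W) = W^{\oplus k}$ explicit: a homomorphism $\vp$ is determined by the tuple $(\vp(v_1),\dots,\vp(v_k))$ of arbitrary vectors in $W$. For each $i$, the condition that $\psi_i \circ \vp$ be symmetric means that the bilinear form $(v,v') \mapsto (\psi_i(\vp(v)))(v')$ on $V$ is symmetric; testing this on the basis pairs $(v_j,v_\ell)$ gives, for each $i$ and each $j<\ell$, the single linear functional
$$L_{i,j,\ell}(\vp) := (\psi_i(\vp(v_j)))(v_\ell) - (\psi_i(\vp(v_\ell)))(v_j)$$
on $\Hom(V,W)$. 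I would note that the diagonal pairs $j=\ell$ give nothing (the corresponding expression is identically zero), and that symmetry of the form is equivalent to the vanishing of all $L_{i,j,\ell}$ for $j<\ell$, so this genuinely is the system of $m\binom{k}{2}$ conditions in question.

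Next I would recall the elementary fact that a finite family of linear functionals on a vector space is linearly \emph{dependent} precisely when some nontrivial linear combination of them is the zero functional. Applying this to the family $\{L_{i,j,\ell}\}$, the conditions are dependent if and only if there exist scalars $c_{i,j,\ell}$, not all zero, with $\sum_{i,j,\ell} c_{i,j,\ell} L_{i,j,\ell} = 0$ as a functional on $\Hom(V,W)$, i.e. $\sum_{i,j,\ell} c_{i,j,\ell} L_{i,j,\ell}(\vp) = 0$ for all $\vp \in \Hom(V,W)$. Substituting the formula for $L_{i,j,\ell}$ gives exactly the displayed identity in the statement, so the two conditions are literally a restatement of one another once the functionals have been written down.

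This is essentially a definition-chasing argument and there is no real obstacle; the only point requiring a moment's care is checking that the $L_{i,j,\ell}$ as written are the right functionals — i.e. that imposing symmetry of $\psi_i\circ\vp$ on all of $V\times V$ is equivalent to imposing it on the basis pairs with $j<\ell$, which is immediate by bilinearity and skew-symmetry of the defect $B_i(v,v')-B_i(v',v)$. I would also remark, matching the surrounding text, that this direction of Proposition \ref{prop:dsg-main} (and its multi-form analogue) holds with no hypotheses on the $\psi_i$, so the lemma is stated and proved in that generality; the surjectivity of the $\psi_i$ and the restriction $m=2$ only enter later, in the converse.
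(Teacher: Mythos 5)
Your proposal is correct and follows essentially the same route as the paper: express each symmetry condition as the linear functionals obtained by testing on basis pairs $(v_j,v_\ell)$ with $j<\ell$, and observe that linear dependence of these functionals is, by definition, exactly the existence of the displayed nontrivial vanishing combination. The paper's proof is just a terser version of this same definition-chasing argument.
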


\begin{proof} For each $i$ the condition that $\psi_i \circ \vp$ be 
symmetric is equivalent to requiring that for each $v,v' \in V$, we have
$(\psi_i(\vp(v)))(v')=(\psi_i(\vp(v')))(v)$. It is sufficient to
impose this condition for $v=v_j, v'=v_\ell$, with $j<\ell$, giving us
the $\binom{k}{2}$ conditions for each $i$. The desired assertion
follows immediately.
\end{proof}

We then have:

\begin{lem}\label{lem:msg-easy} Given vector spaces $V,W$ of dimensions
$k \leq n$, and surjective linear maps
$\psi_1,\dots,\psi_m:W \to V^*$, 
if there exists a $2$-dimensional subspace $V' \subseteq V$ and scalars 
$\lambda_1,\dots,\lambda_m$ not all zero, such that for all $v \in V'$ and
$w \in W$, we have $\sum_i \lambda_i (\psi_i(w))(v)=0$
then the $m$ systems of $\binom{k}{2}$ linear 
conditions imposed on $\vp \in \Hom(V,W)$ by the condition that 
$\psi_i \circ \vp$ be symmetric are dependent. 
\end{lem}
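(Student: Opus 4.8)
The plan is to write down the dependence among the $m\binom{k}{2}$ conditions explicitly, using the criterion of Lemma \ref{lem:dependence-explicit}, by exploiting the hypothesis directly. Fix a basis $v_1,\dots,v_k$ of $V$ with the property that $v_1,v_2$ span the given $2$-dimensional subspace $V'$; this is harmless since Lemma \ref{lem:dependence-explicit} is basis-dependent in its statement but the notion of dependence it characterizes is not. With respect to this basis I need to produce scalars $c_{i,j,\ell}$, not all zero, with $1\le j<\ell\le k$ and $1\le i\le m$, such that
$$\sum_{i,j,\ell} c_{i,j,\ell}\bigl((\psi_i(\vp(v_j)))(v_\ell)-(\psi_i(\vp(v_\ell)))(v_j)\bigr)=0$$
for every $\vp\in\Hom(V,W)$.

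The natural guess, given the hypothesis, is to put weight only on the pair $(j,\ell)=(1,2)$: set $c_{i,1,2}=\lambda_i$ for each $i$, and $c_{i,j,\ell}=0$ for all $(j,\ell)\ne(1,2)$. These are not all zero because the $\lambda_i$ are not all zero. With this choice the left-hand side of the displayed identity becomes
$$\sum_{i=1}^m \lambda_i\bigl((\psi_i(\vp(v_1)))(v_2)-(\psi_i(\vp(v_2)))(v_1)\bigr).$$
Now $\vp(v_1)$ and $\vp(v_2)$ are elements $w_1,w_2$ of $W$, and $v_1,v_2\in V'$, so the hypothesis $\sum_i\lambda_i(\psi_i(w))(v)=0$ for all $w\in W$, $v\in V'$ applies term by term: $\sum_i\lambda_i(\psi_i(w_1))(v_2)=0$ and $\sum_i\lambda_i(\psi_i(w_2))(v_1)=0$. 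Hence the entire expression vanishes identically in $\vp$, which is exactly the dependence asserted by Lemma \ref{lem:dependence-explicit}, completing the proof.

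There is essentially no obstacle here: the lemma is the easy direction, and once one has the explicit reformulation of dependence from Lemma \ref{lem:dependence-explicit} the argument is a one-line substitution. The only point requiring a word of care is the choice of basis — one should remark that extending $v_1,v_2$ to a basis of $V$ is possible and that dependence of the condition systems is a basis-free notion, so it suffices to exhibit the $c_{i,j,\ell}$ for one convenient basis. Everything else is formal.
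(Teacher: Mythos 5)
Your proof is correct and is essentially identical to the paper's: both pick a basis $v_1,v_2$ of $V'$, extend it to a basis of $V$, take the coefficients $c_{i,1,2}=\lambda_i$ (all others zero), and observe that the resulting combination vanishes for every $\vp$ by the hypothesis, invoking Lemma \ref{lem:dependence-explicit} to conclude. No gaps; your added remark on basis-independence is a harmless elaboration of what the paper leaves implicit.
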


\begin{proof} Indeed, let $v_1,v_2$ be a basis for $V'$. The dependence
of conditions for each $\psi_i$ is explicitly constructed by the observation 
that for all $\vp$, we have
$$\sum_i \lambda_i(\psi_i(\vp(v_2))(v_1)-\psi_i(\vp(v_1))(v_2))=0.$$
Since $v_1,v_2$ are linearly independent, they can be extended to a basis
of $V$, and we obtained the desired linear dependence by Lemma
\ref{lem:dependence-explicit}.
\end{proof}

We now begin the core technical work for proving Proposition 
\ref{prop:dsg-main}. The following definition is convenient for our analysis.

\begin{defn} Given $k \leq n$, we say a $k \times n$ matrix $A$ is in
\textbf{generalized Jordan normal form} if the leftmost $k \times k$
block is in Jordan normal form. 
\end{defn}

We will always work with only a single matrix in generalized Jordan normal
form, so we can impose the following notation without fear of confusion:

\begin{notn} If $A$ is in generalized Jordan normal form, let $\lambda_i$
denote the diagonal entries of $A$, and set $\epsilon_i=(A)_{i,i+1}$ for 
$i=1,\dots,k-1$. We also use the convention that $\epsilon_0=\epsilon_k=0$.
\end{notn}

The first key technical lemma is the following.

\begin{lem}\label{lem:technical-first} Given $k \leq n$, let $A$ be a
$k \times n$ matrix, and suppose there exist scalars $c_{i,j},c'_{i,j}$
for all $1 \leq i<j \leq k$, not all zero, and such that for every 
$n \times k$ matrix $B$, we have the identity 
\begin{equation}\label{eq:cij-matrix} \sum_{i<j}c_{i,j}(B_{i,j}-B_{j,i})=
\sum_{i<j}c'_{i,j}((AB)_{i,j}-(AB)_{j,i}).
\end{equation}
Suppose further that $A$ is in generalized Jordan normal form, and
$c'_{i,j} \neq 0$ for some $i<j$ occurring in the generalized
Jordan blocks in the $i_0,\dots,i_1$ and $j_0,\dots,j_1$ rows of $A$,
with $i_0 \leq i_1,j_0 \leq j_1$. Then we have:
\begin{ilist}
\itm $i_1 < j_0$;
\itm $\lambda_i=\lambda_j$ for any $i,j$ with $i_0 \leq i \leq i_1$ and
$j_0 \leq j \leq j_1$;
\itm the matrix $(c'_{i,j})_{i_0 \leq i \leq i_1, j_0 \leq j \leq j_1}$
takes constant values along antidiagonals, with non-zero entries only
occurring for $i+j \geq \max\{i_0+j_1,i_1+j_0\}$. 
\end{ilist}
\end{lem}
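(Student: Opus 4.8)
The idea is to treat the defining identity \eqref{eq:cij-matrix} as a statement about the linear functional on $B$-space that it defines, and to extract constraints by plugging in cleverly chosen matrices $B$. Since $A$ is in generalized Jordan normal form, the action of left-multiplication by $A$ on a matrix $B$ is completely explicit: row $i$ of $AB$ is $\lambda_i$ times row $i$ of $B$ plus $\epsilon_i$ times row $i+1$ of $B$ (with $\epsilon_k=0$), so $(AB)_{i,j}=\lambda_i B_{i,j}+\epsilon_i B_{i+1,j}$. Substituting this into the right-hand side of \eqref{eq:cij-matrix} and comparing coefficients of each $B_{i,j}$ on both sides yields a finite system of linear relations among the $c_{i,j}$ and $c'_{i,j}$; everything in the lemma should drop out of this system. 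I would first record this coefficient-comparison identity carefully (noting that one must track both $B_{p,q}$ and $B_{q,p}$ contributions, and that only the ``antisymmetric part'' of $B$ matters, so one may as well assume $B$ ranges over antisymmetric-type test matrices $E_{p,q}-E_{q,p}$ with $q\le k$, plus matrices $E_{p,q}$ with $q>k$ to kill boundary terms).

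**Carrying it out.** Write $d_{i,j}=c_{i,j}$ (extended antisymmetrically, $d_{j,i}=-d_{i,j}$, $d_{i,i}=0$) and similarly $d'$. Plugging $B=E_{p,q}-E_{q,p}$ with $p<q\le k$ and reading off coefficients, the identity becomes something of the shape
\begin{equation*}
d_{p,q} \;=\; \lambda_p\, d'_{p,q} \;+\; \lambda_q\, d'_{p,q} \;+\; (\text{shift terms involving } \epsilon)
\end{equation*}
— more precisely, for $q>k$ (the columns not in the left block) the left side contributes nothing and one gets $\sum_i d'_{i,?}$-type relations forcing the $\lambda$'s and $\epsilon$'s to conspire; this is where (ii) will come from. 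Concretely: pick indices $i,j$ with $i_0\le i\le i_1$, $j_0\le j\le j_1$ and $c'_{i,j}\ne 0$. Testing against $B$ supported in column $q>k$ isolates the statement that the functional $B\mapsto \sum c'((AB)_{i,j}-(AB)_{j,i})$ must land in the span of the $B_{i,j}-B_{j,i}$ with $j\le k$; since multiplication by $A$ within a single Jordan block for eigenvalue $\lambda\ne\lambda'$ cannot produce such cancellation, one forces $\lambda_i=\lambda_j$, giving (ii). For (i), if the two blocks overlapped or were nested ($i_1\ge j_0$) one would produce a nonzero coefficient on a ``diagonal'' term $B_{i,j}-B_{j,i}$ with $i=j$ or an inconsistency between the contributions of $B_{i,j}$ and $B_{j,i}$, contradicting that the $c_{i,j}$ exist with the stated index range; so $i_1<j_0$. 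For (iii), within the rectangle $[i_0,i_1]\times[j_0,j_1]$ the shift operator coming from the single common eigenvalue $\lambda$ (its nilpotent part, governed by the $\epsilon$'s, which are all $1$ inside a genuine Jordan block and $0$ at block boundaries) acts by shifting indices, and the requirement that the whole expression collapse onto the allowed span forces the coefficient array $(c'_{i,j})$ to be annihilated by the relevant shift — equivalently to be constant along antidiagonals $i+j=\text{const}$, with the support condition $i+j\ge\max\{i_0+j_1,\,i_1+j_0\}$ coming from the fact that shifting a coefficient off the top/left edge of the rectangle must vanish.

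**Where the difficulty lies.** The routine part is setting up the coefficient comparison; the delicate part is (iii), the precise bookkeeping of how the nilpotent shifts within the two Jordan blocks interact and exactly which antidiagonals are forced to vanish. One has to be careful that the "shift" on the first index $i$ and the "shift" on the second index $j$ enter with opposite effective signs (because of the $(AB)_{i,j}$ versus $(AB)_{j,i}$ asymmetry), which is precisely why constancy along \emph{anti}diagonals (rather than diagonals) emerges, and why the support threshold is $\max\{i_0+j_1,i_1+j_0\}$ rather than something symmetric. I would handle this by induction on the block sizes $i_1-i_0$ and $j_1-j_0$: peeling off the extreme antidiagonal (the corner entry, which has no shifted partner to cancel against) forces either that entry to vanish or the adjacent $\lambda$'s/$\epsilon$'s to match, and then the inductive step propagates the constancy inward. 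A secondary subtlety is making sure the hypothesis "$c_{i,j},c'_{i,j}$ not all zero, indexed only by $i<j$" is used correctly — the asymmetry of the index set is what rules out the overlapping-block configurations in (i) and pins down the direction of the inequality. Once (i) and (ii) are in hand, (iii) is a finite linear-algebra computation inside a single rectangle with a single eigenvalue, which is the cleanest setting to run the induction.
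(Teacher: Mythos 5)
Your high-level strategy -- plug explicit elementary test matrices $B$ into the identity, use the explicit Jordan structure of $A$ to expand $(AB)_{i,j}$, and compare coefficients -- is exactly the route the paper takes, but what you have written is a plan rather than a proof, and two of your stated reductions are actually wrong. First, the reduction ``only the antisymmetric part of $B$ matters, so one may as well test on $E_{p,q}-E_{q,p}$'' is invalid: the left side of the identity depends only on the antisymmetric part of the top $k\times k$ block of $B$, but the right side does not, and the fact that the identity holds for \emph{all} $B$ (in particular for symmetric and non-antisymmetric ones) is precisely where the leverage comes from. The paper tests with $B_{i+1,j}=1$, $B_{j,i+1}=b$ and lets $b$ vary, which splits the identity into the two separate relations $c_{i+1,j}=c'_{i+1,j}\lambda_{i+1}+c'_{i,j}\epsilon_i$ and $c_{i+1,j}=c'_{i+1,j}\lambda_j+c'_{i+1,j-1}\epsilon_{j-1}$, together with the diagonal test $B_{i+1,i+1}=1$ giving $0=c'_{i,i+1}\epsilon_i$; with only antisymmetrized tests you would obtain just a combined relation, which is too weak to run the arguments for (i)--(iii). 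Second, $B$ is $n\times k$, so ``testing against $B$ supported in column $q>k$'' is not meaningful -- the extra degrees of freedom sit in the rows $k+1,\dots,n$ of $B$ -- and your formula $(AB)_{i,j}=\lambda_iB_{i,j}+\epsilon_iB_{i+1,j}$ is only correct for $B$ supported in the first $k$ rows, since generalized Jordan normal form says nothing about columns $k+1,\dots,n$ of $A$.

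Beyond these, the three conclusions are asserted rather than derived. For (i), note that Jordan blocks are disjoint consecutive intervals, so there is no ``overlapping or nested'' configuration to exclude; (i) is exactly the claim that $c'_{i,j}=0$ whenever $i$ and $j$ lie in the \emph{same} block, and the paper proves this by induction on $j-i$ (base cases $j=i+1$ from $0=c'_{i,i+1}\epsilon_i$ and $j=i+2$, inductive step from the in-block relation $c'_{i,j}=c'_{i+1,j-1}$); nothing in your sketch substitutes for this. For (ii), the actual argument chooses $i,j$ in the rectangle with $c'_{i,j}\neq 0$ and $i+j$ minimal, so that the two $\epsilon$-terms in the relation $c'_{i,j}(\lambda_i-\lambda_j)=c'_{i,j-1}\epsilon_{j-1}-c'_{i-1,j}\epsilon_{i-1}$ vanish; your appeal to ``cannot produce such cancellation'' is the conclusion, not a proof. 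Your antidiagonal-peeling idea for (iii) is in the right spirit (the paper indeed shows $c'_{i+1,j-1}=c'_{i,j}$ when $\epsilon_i=1$, and symmetrically, with the support threshold coming from the block edges), but it is not carried out, and it depends on first having the separated relations that your antisymmetric-test reduction discards. So the proposal identifies the correct framework but leaves the core of the lemma unproved.
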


\begin{proof} We use the convention that $c_{i,i}=c'_{i,i}=0$ for 
$i=1,\dots,k$.
Given any $i,j$ with $i+1<j$, we first observe that if
$B_{i+1,j}=1, B_{j,i+1}=b$, 
and $B_{\ell,\ell'}=0$ for all other $\ell,\ell'$, then
\eqref{eq:cij-matrix} simplifies to
$$c_{i+1,j}(1-b)=c'_{i+1,j}(\lambda_{i+1}-b\lambda_j)
+c'_{i,j}\epsilon_{i}-c'_{i+1,j-1}b\epsilon_{j-1}.$$
Since this holds for all $b$, we find 
\begin{equation}\label{eq:mess}
c_{i+1,j}=c'_{i+1,j}\lambda_{i+1}+c'_{i,j}\epsilon_i
=c'_{i+1,j}\lambda_j+c'_{i+1,j-1}\epsilon_{j-1}.
\end{equation}
Also obersve that if $i+1=j$, setting $B_{i+1,i+1}=1$ and all other
entries equal to $0$ similarly gives 
\begin{equation}\label{eq:no-mess}
0=c'_{i,i+1}\epsilon_i.
\end{equation}

We will prove by induction on $j-i$ that if the $i$th and $j$th rows 
are in the same Jordan block of $A$, then $c'_{i,j}=0$. Being in the same
block is equivalent to $\epsilon_i=\epsilon_{i+1}=\dots=\epsilon_{j-1}=1$,
and then $\lambda_i=\lambda_{i+1}=\dots=\lambda_j$, so \eqref{eq:mess} 
reduces to
\begin{equation}\label{eq:less-mess}
c_{i+1,j}=c'_{i+1,j}\lambda_i+c'_{i,j}
=c'_{i+1,j}\lambda_i+c'_{i+1,j-1}.
\end{equation}
The base cases are $j=i+1$ and $j=i+2$. In the first case, 
\eqref{eq:no-mess} together with $\epsilon_i=0$ immediately yields
$c'_{i,i+1}=0$, as desired. In the case $j=i+2$, \eqref{eq:less-mess}
yields $c'_{i,i+2}=c'_{i+1,i+1}=0$ by our earlier convention.
Now we consider the case of arbitrary $i<j-1$. 
Then \eqref{eq:less-mess} yields $c'_{i,j}=c'_{i+1,j-1}$, so by the
induction hypothesis, we have $c'_{i,j}=0$, as desired. We thus conclude
(i).

To prove (ii), suppose we are given $i_0<i_1<j_0<j_1$ such that there 
exist $i,j$
with $c'_{i,j}\neq 0$, $i_0\leq i \leq i_1, j_0 \leq j \leq j_1$, and
further suppose we have chosen $i,j$ so that $i+j$ is minimal with respect
to these conditions. Thus we have either $c'_{i-1,j}=0$, or $i=i_0$, so
that $\epsilon_{i-1}=0$, and similarly either $c'_{i,j-1}=0$ or 
$\epsilon_{j-1}=0$. Replacing $i+1$ with $i$ in \eqref{eq:mess}, we find 
$c'_{i,j}(\lambda_i-\lambda_j)=c'_{i,j-1}\epsilon_{j-1}
-c'_{i-1,j}\epsilon_{i-1}=0$, so we conclude that $\lambda_i=\lambda_j$.

Finally, (iii) is equivalent to the assertions that when $c'_{i,j}\neq 0$,
if $\epsilon_i = 1$ then $j>j_0$ and $c'_{i+1,j-1}=c'_{i,j}$, and if 
$\epsilon_j = 1$ then $i>i_0$ and $c'_{i-1,j+1}=c'_{i,j}$. First suppose
$\epsilon_i=1$. Then $i<i_1<j$, so from \eqref{eq:mess} we obtain
$$c'_{i,j}\epsilon_i-c'_{i+1,j-1}\epsilon_{j-1}
=c'_{i+1,j}(\lambda_j- \lambda_{i+1}).$$
We also have $\lambda_{i+1}=\lambda_i=\lambda_j$
by (ii), so the right hand side is $0$. Since $c'_{i,j}\epsilon_i=c'_{i,j}$
is assumed non-zero, we conclude that $\epsilon_{j-1} \neq 0$ and hence
$j>j_0$ and $c'_{i+1,j-1}=c'_{i,j}$, as desired. The case that 
$\epsilon_j=1$ is obtained in the same way, if we replace $i+1$ by $i$ 
and $j-1$ by $j$ in \eqref{eq:mess}.
\end{proof}

We can now prove the next lemma, which yields Proposition \ref{prop:dsg-main}
almost immediately.

\begin{lem}\label{lem:technical-second} In the situation of Lemma 
\ref{lem:technical-first}, with $c'_{i,j} \neq 0$ for some $i<j$, so that
$\lambda_i=\lambda_j$, we have
$$\dim \ker (A-\lambda_i I_{k,n})^T \geq 2.$$
\end{lem}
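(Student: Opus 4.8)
The plan is to extract from the structural conclusions of Lemma \ref{lem:technical-first} an explicit two-dimensional space of vectors killed by $(A-\lambda_i I_{k,n})^T$, i.e.\ two independent rows in the left kernel of $A - \lambda_i I_{k,n}$ (where $I_{k,n}$ denotes the $k\times n$ matrix with the $k\times k$ identity in its leftmost block). First I would fix some pair $i<j$ with $c'_{i,j}\neq 0$; by part (i) of Lemma \ref{lem:technical-first} the $i$th and $j$th rows lie in distinct Jordan blocks, say the blocks occupying rows $i_0,\dots,i_1$ and $j_0,\dots,j_1$, and by part (ii) the common eigenvalue of both blocks is $\lambda_i$. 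The key observation is that for a single Jordan block $J_\lambda$ of size $\ell$ sitting inside $A$ in rows $a,\dots,a+\ell-1$, the $(a+\ell-1)$st standard basis vector $e_{a+\ell-1}$ (viewed as a column of length $k$) satisfies $(A-\lambda I_{k,n})^T e_{a+\ell-1}=0$: the transpose of $A-\lambda I_{k,n}$ has, in the columns indexed by that block, only the superdiagonal $\epsilon$'s of the Jordan block (the diagonal being cancelled), and the last row of a Jordan block has no superdiagonal entry; one only needs to check that no off-block entry of $A$ in those rows is nonzero, which is exactly the content of the generalized-Jordan-form hypothesis away from the leftmost $k\times k$ block — but wait, off-block columns beyond the $k$th are unconstrained. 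This is the point to handle with care.

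So the main obstacle is precisely that $A$ is only in \emph{generalized} Jordan normal form: its leftmost $k\times k$ block is a genuine Jordan form, but the remaining $n-k$ columns are arbitrary. Hence $e_{a+\ell-1}$ need not lie in $\ker(A-\lambda I_{k,n})^T$ in general. The resolution is to go back to the derivation of equations \eqref{eq:mess} and \eqref{eq:no-mess} in the proof of Lemma \ref{lem:technical-first}: those equations were obtained by feeding very sparse matrices $B$ into \eqref{eq:cij-matrix}, and \eqref{eq:cij-matrix} only involves the first $k$ columns of $A$ through the product $AB$ when $B$ is supported in the first $k$ rows. One should instead allow $B$ to be supported in rows with index $>k$, producing additional identities that constrain the off-block part of $A$ along the rows $i_0,\dots,i_1,j_0,\dots,j_1$ whenever some $c'$ in the corresponding antidiagonal band is nonzero. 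Concretely, testing \eqref{eq:cij-matrix} against $B$ with a single nonzero entry $B_{p,q}$ for $p>k$ shows that a nonzero $c'_{i,j}$ (with $q=i$ or $q=j$) forces $A_{i,p}=0$ or relates it to neighbouring entries via the same $\epsilon$-shift pattern as in \eqref{eq:mess}. Combined with part (iii) of Lemma \ref{lem:technical-first} — which pins down exactly which $c'$ in the band are nonzero and that they are antidiagonal-constant — this yields: the "bottom" rows $i_1$ and $j_1$ of the two blocks have \emph{all} their entries beyond position $k$ equal to zero, or more precisely the relevant linear combinations vanish.

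Granting that, I would conclude as follows. The vectors $e_{i_1}$ and $e_{j_1}$ in $K^k$ are linearly independent since $i_1<j_0\le j_1$ by part (i). For each of them, $(A-\lambda_i I_{k,n})^T e_{i_1}$ has, in the first $k$ coordinates, only the contribution of the superdiagonal entry $\epsilon_{i_1}$ of the ambient Jordan structure at row $i_1$; but $i_1$ is the last index of its block, so $\epsilon_{i_1}=0$ (or the next block has a different eigenvalue, contradicting nothing here since we use $\lambda_i I_{k,n}$, not a block-diagonal subtraction — this needs a one-line check that rows immediately below $i_1$ do not contribute, which holds because column $i_1$ of $A^T$, i.e.\ row $i_1$ of $A$, has its only within-block off-diagonal entry at position $i_1+1$, equal to $\epsilon_{i_1}=0$). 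In the coordinates beyond $k$, the vanishing established in the previous paragraph applies. Hence $e_{i_1},e_{j_1}\in\ker(A-\lambda_i I_{k,n})^T$, giving $\dim\ker(A-\lambda_i I_{k,n})^T\ge 2$, as claimed. If the careful bookkeeping of the off-block columns proves more delicate than sketched, the fallback is to observe that Lemma \ref{lem:technical-second} is only applied with $n$ eventually specialized so that the relevant $A$ arises as a representing matrix with controlled extra columns; but I expect the direct argument above to go through with the additional test matrices.
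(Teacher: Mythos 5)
Your plan aims to exhibit $e_{i_1}$ and $e_{j_1}$ (the last rows of the two Jordan blocks) as elements of $\ker(A-\lambda_i I_{k,n})^T$, and you correctly identify that the only obstruction is the unconstrained columns $k+1,\dots,n$ of $A$. But the way you propose to close this — by extracting from \eqref{eq:cij-matrix} that rows $i_1$ and $j_1$ of $A$ vanish in those columns — does not work. Testing \eqref{eq:cij-matrix} against $B$ supported in a single entry $B_{p,q}$ with $p>k$ yields, for each $q$, the relation $\sum_{\ell} c'_{q,\ell} A_{\ell,p}=0$ (here $c'_{q,\ell}$ is extended antisymmetrically). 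This says that the row vectors $v_q := (c'_{q,1},\dots,c'_{q,k})$ annihilate the columns of $A$ beyond position $k$; it says nothing about individual rows of $A$ unless one first knows that some $v_q$ is proportional to a standard basis vector. In the two-block situation the antidiagonal description from part (iii) of Lemma~\ref{lem:technical-first} does force $v_q\propto e_{j_1}$ for $q$ minimal in block one, so your conclusion would follow — but it fails when the $c'_{i,j}$ are supported across more than two Jordan blocks. For instance, with three $2\times 2$ blocks in rows $\{1,2\},\{3,4\},\{5,6\}$ all with the same eigenvalue, nothing in Lemma~\ref{lem:technical-first} forbids having $c'_{1,4}\ne 0$ and $c'_{1,6}\ne 0$ simultaneously, in which case $v_1=(0,0,0,c'_{1,4},0,c'_{1,6})$ is in the left kernel but $e_4$ alone need not be. Your "fallback" (that $n$ is eventually controlled) is also not available: in Proposition~\ref{prop:dsg-main} and its application, $n=\dim W$ is unbounded.

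The repair is exactly what the constraint $\sum_{\ell} c'_{q,\ell} A_{\ell,p}=0$ is pointing at: do not use the standard basis vectors, but rather the vectors $v_i=(c'_{i,1},\dots,c'_{i,k})$ themselves, taking $i$ minimal in each of the two Jordan blocks such that $c'_{i,\cdot}\ne 0$. The minimality, together with \eqref{eq:mess}, gives $c_{i,j}=c'_{i,j}\lambda_i$, and plugging $B_{j,i}=1$ into \eqref{eq:cij-matrix} for $j=1,\dots,n$ shows $v_i(A-\lambda_i I_{k,n})=0$ directly, with the off-block columns handled automatically by the constraint above. Two such $v_i$ from the two blocks are linearly independent, and one obtains the claim. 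This is the route the paper takes; your sketch has the right raw material (the extra test matrices and the antidiagonal structure) but weights the kernel vectors incorrectly.
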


\begin{proof} Continuing with the convention that $c'_{i,i}=0$, we also
extend $c'_{i,j}$ for $j<i$ by setting $c'_{i,j}=-c'_{j,i}$.
Suppose we choose $i$ minimal in a given Jordan block of $A$
such that $c'_{i,j} \neq 0$ for some $j$, and set 
$v_i=(c'_{i,1},\dots,c'_{i,k})$. Then we claim that 
$v_i (A -\lambda_i I_{k,n})=0$. To see this, we first observe that for
any $j$ with $c'_{i,j} \neq 0$, the minimality of $i$ within its Jordan 
block gives $c'_{i-1,j}\epsilon_{i-1}=0$. For $j>i$, replacing $i+1$ with 
$i$ in \eqref{eq:mess} we have
\begin{equation}\label{eq:eigenvalue}
c_{i,j}=c'_{i,j}\lambda_i+c'_{i-1,j}\epsilon_{i-1}=c'_{i,j}\lambda_i.
\end{equation}
Under our convention for $j<i$, one checks that we still obtain
\eqref{eq:eigenvalue} from \eqref{eq:mess}.
In order to verify our claim, we return to \eqref{eq:cij-matrix}, and
for any $j=1,\dots,n$, set $B_{j,i}=1$, and all other coefficients of
$B$ to be $0$. In this case, \eqref{eq:cij-matrix} yields
\begin{align*}-c_{i,j}
& = -\sum_{\ell>i}c'_{i,\ell}A_{\ell,j}+\sum_{\ell<i}c'_{\ell,i}A_{\ell,j}\\
& = - \sum_{\ell=1}^k c'_{i,\ell}A_{\ell,j},
\end{align*}
using the convention that $c_{i,j}=0$ if $j>k$.
The righthand side is equal to the dot product
$-v_i \cdot (A_{1,j},\dots,A_{k,j})$,
while our \eqref{eq:eigenvalue} gives us that the lefthand side is 
$-\lambda_i c'_{i,j}=
-v_i \cdot (\underbrace{0,\dots,0}_{j-1},\lambda_i,0,\dots,0)$. 
Since the identity holds for all $j=1,\dots,n$, we conclude that 
$v_i (A -\lambda_i I_{k,n})=0$, as desired.

It remains to prove that given any $\ell<\ell'$ with $c'_{\ell,\ell'} \neq 0$,
there are at least two choices of $i$ as above such that the corresponding 
$v_i$ are linearly independent. But this is clear: we have from 
Lemma \ref{lem:technical-first} that the $\ell$th and $\ell'$th rows are
contained in different Jordan blocks with the same eigenvalue, so
we can take $i$ minimal as above for each of the two Jordan blocks 
gives the desired pairs of linearly independent $v_i$.
\end{proof}

We can now quickly conclude our main results.

\begin{proof}[Proof of Proposition \ref{prop:dsg-main}]
One direction is proved by Lemma \ref{lem:msg-easy}. For the converse,
we suppose that we have a linear dependence among the $2\binom{k}{2}$
conditions imposed by the condition that both $\psi_i \circ \vp$ be
symmetric for all $\vp \in \Hom(V,W)$. It is clear that we can choose
bases of $V$ and $W$ so that (in terms of the induced dual basis
on $V^*$) we have that $\psi_1$ is the matrix consisting of the 
$k \times k$ identity matrix followed by the $k \times (n-k)$ zero matrix.
Moreover, since we have assumed that $F$ is algebraically closed, by 
modifying the bases compatibly, we can further put the
matrix $A$ corresponding to $\psi_2$ into generalized Jordan normal form. 
Translating Lemma \ref{lem:dependence-explicit} into
this context, we find that the existence of a linear dependence is
precisely equivalent to the existence of $c_{i,j},c'_{i,j}$ as in
\eqref{eq:cij-matrix}. According to Lemma \ref{lem:technical-second},
we can then produce the desired space $V'$ as any $2$-dimensional
subspace of $\ker(A-\lambda_i I_{k,n})^T$, yielding the desired result. 
\end{proof}

\begin{proof}[Proof of Theorem \ref{thm:dsg-crit}] Because the
individual symplectic Grassmannians $SG(k,\sE,\langle,\rangle_i)$
for $i=1,2$ are smooth, we can apply Lemma \ref{lem:reduce-to-point}
to reduce the theorem to the case that $X$ is a point. In this case,
again using smoothness of the symplectic Grassmannians, we have that
$MSG(k,\sE,\underline{\langle,\rangle})$ is smooth of codimension
$2\binom{k}{2}$ at a point if and only if the tangent spaces of the
two symplectic Grassmannians intersect transversely at that point.
Lemma \ref{lem:sg-tangent} describes these tangent spaces, and 
applying Proposition \ref{prop:dsg-main} to the case that 
$W=\sE|_x/V$ and the $\psi_i$ are induced by the $\langle,\rangle_i$ 
then yields the desired statement.
\end{proof}

\section{Expected dimensions in fixed determinant}\label{sec:exp-dim}

We can now prove Theorems \ref{thm:exp-dim-twist} and 
\ref{thm:exp-dim-double}. The lemma introducing the required symplectic
forms is essentially due to Bertram-Feinberg and Mukai, stated slightly
more generally as follows:

\begin{lem}\label{lem:construct-form} Given $\sL$ of degree $d$ on $C$,
and an effective divisor $\Delta$ of degree $\delta$ on $C$,
suppose that $\vp:\sL \to \omega(\Delta)$ is a non-zero morphism. 
Let $\cX$ be a stack over $\Spec F$, and $\sE$ a vector bundle of rank 
$2$ on $C \times \cX$, with 
an isomorphism $\det \sE \risom p_1^* \sL$.
Then for any effective divisor $D$ on $C$, if we set $D'=p_1^* D$
on $C \times \cX$, we have that
$\widetilde{\sE}:=p_{2*}(\sE(D')/\sE(-D'-\Delta))$ is a vector bundle of
rank $4\deg D+2\delta$ on $\cX$, and we obtain an alternating form 
$\langle,\rangle$ on $\widetilde{\sE}$ defined by
$$\langle s_1,s_2 \rangle = 
\sum_{P \in C} 
\res_P \vp(\tilde{s}_{1,P} \wedge \tilde{s}_{2,P}),$$
where each $\tilde{s}_{i,P}$ is a representative in $\sE(D')$ of
$s_i$ in a suitable neighborhood of $P$, and we extend $\vp$ to a map
from rational sections of $\sL$ to rational sections of $\omega$.

If further both $D'$ and $\Delta$ have support disjoint from the vanishing 
locus of $\vp$, then $\langle,\rangle$ is a symplectic form,
and $p_{2*}(\sE/\sE(-D'-\Delta))$ is a subbundle of $\widetilde{\sE}$ 
of rank $2\deg D+2\delta$, with uniform degeneracy of rank $2\deg D$. If 
also for every geometric point $x \in \cX$, we have that the
corresponding fiber $\sE|_x$ of $\sE$ satisfies 
$$h^1(C,\sE|_x(D))=h^0(C,\sE|_x(-D-\Delta))=0,$$
then $p_{2*}(\sE(D'))$ is a
subbundle of $\widetilde{\sE}$ of rank $d+2\deg D+2-2g$, and is isotropic
for $\langle,\rangle$. Moreover, under the given hypotheses the 
construction of all bundles in question is compatible with base change.
\end{lem}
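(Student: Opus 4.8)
The plan is to unwind the construction step by step, treating the three successive claims (``$\widetilde{\sE}$ is a bundle with an alternating form''; ``$\langle,\rangle$ is symplectic with the stated degeneracy subbundle''; ``$p_{2*}(\sE(D'))$ is an isotropic subbundle'') in order, since each builds on the previous. For the first claim, the key point is that $\sE(D')/\sE(-D'-\Delta)$ is a coherent sheaf on $C\times\cX$ supported on the relative divisor $(D+D+\Delta)\times\cX$, hence finite and flat over $\cX$ of the asserted length $2(2\deg D+\delta)$; therefore its pushforward $\widetilde{\sE}$ is locally free of rank $4\deg D+2\delta$, and this formation commutes with base change because the sheaf is $\cX$-flat with support finite over $\cX$. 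The pairing is then defined fiberwise by the residue formula; $\vp$ is used to convert the $\sO$-valued wedge $\tilde s_1\wedge\tilde s_2\in\sE(D')\otimes\sE(D')\to(\det\sE)(2D')=\sL(2D')$ into a rational section of $\omega(\Delta)(2D')$, whose poles along $D+D+\Delta$ mean the sum of local residues is well-defined and independent of the choice of representatives $\tilde s_{i,P}$ (a standard residue-theorem argument: changing a representative by a section of $\sE(-D'-\Delta)$ changes the wedge by something with no pole there, so contributes no residue). Alternation $\langle s,s\rangle=0$ is immediate from $s\wedge s=0$. I would note here that the local computation is exactly the Bertram--Feinberg--Mukai one and cite \cite{b-f2}, \cite{mu2}.

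For the second claim, assume $\mathrm{Supp}(D')$ and $\Delta$ are disjoint from the vanishing locus of $\vp$. Then $\vp$ is an isomorphism $\sL\risom\omega(\Delta)$ in a neighborhood of each relevant point, so the pairing is, up to the invertible twist by $\vp$, the ``obvious'' residue pairing on $\sE(D')/\sE(-D'-\Delta)$; nondegeneracy on fibers is then a local statement at each point $P$ of $D+D+\Delta$, where it reduces to the standard perfectness of the residue pairing between $\sE(D')/\sE$ and $\sO(-\Delta)/\sO(-D-\Delta)$ tensored appropriately — i.e. the ``top'' part of the filtration pairs perfectly with the ``bottom'' part. I would introduce the two-step filtration $\sE(-D'-\Delta)\subseteq\sE(-D')\subseteq\sE(D')$ of $\sE(D')$ on $C\times\cX$ (or rather the induced filtration on the quotient), observe that $p_{2*}(\sE/\sE(-D'-\Delta))$ is the subsheaf $\sE(-D')/\sE(-D'-\Delta)$ pushed forward — wait, more precisely $p_{2*}(\sE(-D')/\sE(-D'-\Delta))$ has rank $2\delta$ and $p_{2*}(\sE/\sE(-D'-\Delta))$ has rank $2\deg D+2\delta$; each is locally free and a subbundle because the relevant quotients are again $\cX$-flat with finite support. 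The kernel of $\widetilde{\sE}\to\widetilde{\sE}^*$ is computed fiberwise: a class is in the radical iff it pairs to zero with everything, and using the explicit residue pairing this radical is exactly the image of $p_{2*}(\sE(-D')/\sE(-D'-\Delta))$, of rank $2\deg D$, inside $p_{2*}(\sE/\sE(-D'-\Delta))$ — giving both that the form restricted to the rank-$(2\deg D+2\delta)$ subbundle has uniform degeneracy of rank $2\deg D$, and that $r-p=2\delta$ as required by Proposition \ref{prop:alt-grass}. The ``uniform'' part (kernel persists after base change) again follows from flatness of all sheaves involved.

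For the third claim, suppose in addition $h^1(C,\sE|_x(D))=h^0(C,\sE|_x(-D-\Delta))=0$ for all geometric points $x$. The short exact sequence $0\to\sE(-D'-\Delta)\to\sE(D')\to\sE(D')/\sE(-D'-\Delta)\to0$ on $C\times\cX$, pushed to $\cX$, gives $p_{2*}(\sE(D'))\to\widetilde{\sE}$ with kernel $p_{2*}(\sE(-D'-\Delta))$ and obstruction in $R^1p_{2*}(\sE(-D'-\Delta))$; the vanishing hypotheses and cohomology-and-base-change force $p_{2*}(\sE(-D'-\Delta))=0$ and $R^1p_{2*}\sE(-D'-\Delta)=0$ and $R^1p_{2*}\sE(D')=0$, so $p_{2*}(\sE(D'))$ is locally free, injects into $\widetilde{\sE}$ as a subbundle, and Riemann--Roch computes its rank as $\chi(\sE|_x(D))=\deg(\sE(D))+2(1-g)=d+2\deg D+2-2g$. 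Isotropy is the residue theorem: for $s_1,s_2$ coming from honest sections of $\sE(D')$ over all of $C$, the rational section $\vp(\tilde s_1\wedge\tilde s_2)$ of $\omega$ has poles only along $D+D+\Delta$ — but it is a global rational section of $\omega$ on the projective curve $C$, so the sum of all its residues is zero, and the only contributing residues are those in the defining sum. Compatibility with base change throughout follows because every pushforward invoked has been shown to commute with base change (either by finite support, or by the vanishing hypotheses plus cohomology and base change).

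The main obstacle I anticipate is the nondegeneracy bookkeeping in the second claim: one must be careful that the residue pairing, after the twist by $\vp$, really does pair the rank-$(2\deg D+2\delta)$ ``lower'' subbundle against the full $\widetilde{\sE}$ with radical of the exact predicted rank $2\deg D$, and that this holds uniformly in families. The cleanest route is to work in a local formal/étale chart near each point of the support where $\sE$ is trivial and $\vp$ is a unit, reducing the entire computation to the scalar residue pairing on $(t^{-\deg D}F[[t]]/t^{\deg D+\delta}F[[t]])^{\oplus 2}$ (or the analogous thing with the $\wedge$), where perfectness and the size of the radical are an explicit finite linear-algebra check; flatness then propagates the fiberwise conclusion.
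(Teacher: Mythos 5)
Your overall route coincides with the paper's: finiteness and $\cX$-flatness of the support of $\sE(D')/\sE(-D'-\Delta)$ plus cohomology and base change give local freeness, base-change compatibility, and the subbundle claims; well-definedness and alternation of the residue pairing come from the observation that a representative in $\sE(-D'-\Delta)$ wedges into $\sL(-\Delta)$, which $\vp$ carries into $\omega$; nondegeneracy is a local residue computation at points where $\vp$ is a unit; and isotropy of $p_{2*}(\sE(D'))$, its rank via Riemann--Roch, and its subbundle property follow from the two vanishing hypotheses and the residue theorem, exactly as in the paper.

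There is, however, a genuine error in your treatment of the degeneracy in the second step. First, what must be computed is not the kernel of $\widetilde{\sE}\to\widetilde{\sE}^*$ (that kernel is zero -- this is precisely the nondegeneracy you establish) but the kernel of $\sF\to\sF^*$ for $\sF=p_{2*}(\sE/\sE(-D'-\Delta))$, i.e.\ $\sF\cap\sF^{\perp}$. Second, your identification of that kernel as the image of $p_{2*}(\sE(-D')/\sE(-D'-\Delta))$ is wrong and internally inconsistent: that sheaf is supported on $\Delta$ and has rank $2\delta$ (as you correctly state a few lines earlier), so it cannot be the asserted rank-$2\deg D$ radical, and in fact it pairs perfectly at the points of $\Delta$, where $\vp$ is a unit. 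The correct degeneracy subbundle is $p_{2*}(\sE(-\Delta)/\sE(-D'-\Delta))$, the part supported along $D$: if $s_1$ is a local section of $\sE(-\Delta)$ and $s_2$ any local section of $\sE$, then $s_1\wedge s_2$ lies in $\sL(-\Delta)$, so $\vp(s_1\wedge s_2)$ is a regular section of $\omega$ and all residues vanish; conversely, at a point $P$ in the support of $\Delta$ the local residue pairing (with $\vp$ a unit there) shows that a section of $\sE$ not lying in $\sE(-\Delta)$ at $P$ pairs nontrivially with some section of $\sE$, so it is not in the radical. This identification is not cosmetic: it is what yields the rank $2\deg D$ asserted in the lemma, and it is exactly what the paper exploits later (final assertion of Theorem \ref{thm:exp-dim-twist}, via Corollary \ref{cor:iso-intersect}) to conclude $\dim(V\cap H^0(C,\sE(-\Delta)))\geq k-\delta$, since the degeneracy subbundle must consist of sections vanishing along $\Delta$. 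With the radical corrected (and uniformity then following as you say, since $p_{2*}(\sE(-\Delta)/\sE(-D'-\Delta))$ is visibly a subbundle compatible with base change), the rest of your argument goes through.
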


Of course, the importance of the lemma is that we can recover
$p_{2*}(\sE)$ as the intersection of $p_{2*}(\sE/\sE(-D'-\Delta))$ 
with $p_{2*}(\sE(D'))$ inside $\widetilde{\sE}$. 

\begin{proof} First, the asserted compatibility with base change
immediately reduces the lemma to the case of a base scheme.
Since $\sE(D')/\sE(-D'-\Delta)$ has fibers which
are skyscraper sheaves of length $4 \deg D + 2 \delta$, by cohomology
and base change we have that pushforward commutes with base change,
and $\widetilde{\sE}$ is locally free of rank $4 \deg D + 2 \delta$.
Similarly, it's clear that $p_{2*}(\sE/\sE(-D'-\Delta))$ is a subbundle 
of $\widetilde{\sE}$ of rank $2\deg D+2\delta$.

Next, note that $\vp(\tilde{s}_{1,P} \wedge \tilde{s}_{2,P})$
is a section of $\omega(2D+\Delta)$ in a neighborhood of $P$, and thus
the residue can only be non-zero if $P$ is in the support of $D$ or of
$\Delta$. If either
$\tilde{s}_{i,P}$ is in $\sE(-D'-\Delta)$, then we have
$\vp(\tilde{s}_{1,P} \wedge \tilde{s}_{2,P})$ a local section of
$\omega$, so the residue vanishes. We thus obtain a well-defined alternating
form on $\widetilde{\sE}$.

If $D$ and $\Delta$ are disjoint from the 
vanishing locus of $\vp$, a local calculation
shows that $\langle,\rangle$ is nondegenerate, hence
symplectic. Similarly, in this case the degeneracy subbundle of 
$p_{2*}(\sE/\sE(-D'-\Delta))$ is precisely 
$p_{2*}(\sE(-\Delta)/\sE(-D'-\Delta))$, so has rank $2\deg D$, as asserted.

Now, if $h^1(C,\sE|_x(D))=0$ for all $x$, then by cohomology and
base change pushforward commutes with base change, and $p_{2*}(\sE(D'))$ is
locally free, with its rank $d+2 \deg D + 2 - 2g$ given by the Riemann-Roch
formula. The hypothesis that $h^0(C,\sE|_x(-D-\Delta))=0$ for all $x$
insures that $p_{2*}(\sE(D'))$ injects into $\widetilde{\sE}$, and
continues to inject after arbitrary base change, which in particular implies
that it is a subbundle. Finally, it is
isotropic for $\langle,\rangle$ by the residue theorem, since in this 
case we have
every $\tilde{s}_{i,P}$ coming from a single pair of global sections,
so we are simply summing the residues of a single rational differential
form.
\end{proof}

\begin{proof}[Proof of Theorem \ref{thm:exp-dim-twist}] 
Because $h^1(C,\sL(-\Delta)) > 0$, we have 
$h^0(C,\sL^{-1}\otimes \omega(\Delta)) > 0$, and we can thus fix a non-zero
morphism $\vp:\sL \to \omega(\Delta)$. Since $\delta$ is assumed minimal,
we moreover have that the support of $\Delta$ is disjoint from the 
vanishing locus of $\vp$.

Let $\cU$ be any open substack of $\cM(2,\sL)$ of finite type,
and $\sE_\cU$ the universal bundle on $C\times \cU$. Then a sufficiently
general choice of sufficiently ample divisor $D$ on $C$ will be disjoint 
from the vanishing locus of $\vp$, and because we assumed $\cU$ is of finite
type, for every vector bundle $\sE|_x$ 
corresponding to a point $x \in \cU$ we will have 
$H^1(C,\sE|_x(D))=H^0(C,\sE|_x(-D-\Delta))=0$.
Fixing such a $D$, let $D'=p_1^*(D)$ on $C\times \cU$.
Now, according to Lemma \ref{lem:construct-form} we have that
$p_{2*} (\sE_{\cU}(D')/\sE_{\cU}(-D'-\Delta))$ is locally free of rank 
$4 \deg D + 2\delta$, with subbundles 
$p_{2*}(\sE_{\cU}/\sE_{\cU}(-D'-\Delta))$ and $p_{2*}\sE_{\cU}(D')$, of
rank $2\deg D+2\delta$ and $d+2\deg D +2-2g$ respectively. Moreover, 
$p_{2*} (\sE_{\cU}(D')/\sE_{\cU}(-D'-\Delta))$ carries a symplectic form 
$\langle,\rangle$, with $p_{2*}\sE_{\cU}(D')$ isotropic for 
$\langle,\rangle$, and $p_{2*}(\sE_{\cU}/\sE_{\cU}(-D'-\Delta))$ having 
uniform degeneracy of rank $2 \deg D$.

Let $\cG$ be the relative Grassmannian 
$\cG(k,p_{2*} (\sE_{\cU}(D')/\sE_{\cU}(-D'-\Delta)))$, which
is smooth of relative dimension $k(4 \deg D + 2 \delta- k)$ over $\cU$. 
Let $\sF$ be the universal subbundle on $\cG$, and set
$\cG_{\cU}(2,\sL,k)$
to be the open substack of $\cG(2,\sL,k)$ parametrizing pairs for
which the underlying bundle is contained in $\cU$. Then observe that
$\cG_{\cU}(2,\sL,k)$ can be identified with the closed substack of $\cG$ 
on which $\sF$ is contained in both 
$p_{2*}(\sE_{\cU}/\sE_{\cU}(-D'-\Delta))$ and $p_{2*}\sE_{\cU}(D')$.
Applying Corollary \ref{cor:iso-intersect}, we find
that every component of $\cG_{\cU}(2,\sL,k)$ has codimension in $\cG$ bounded 
above by $$k(4\deg D + 2\delta-d-2+2g)-\binom{k-\delta}{2}.$$
By Corollary \ref{cor:smooth-codim}, we thus find that every component of 
$\cG_{\cU}(2,\sL,k)$ has dimension at least
$$3g-3-k(k-d+2g-2)+\binom{k-\delta}{2}=\rho-g+\binom{k-\delta}{2}.$$

As $\cU$ varies over all bounded open substacks of $\cM(2,\sL)$, it covers
the entire moduli space, so we conclude that the obtained dimension bounds
apply to all of $\cG(2,\sL,k)$, as desired. The final assertion is an
immediate consequence of the last statement of Corollary 
\ref{cor:iso-intersect}, since the degeneracy subbundle of 
$p_{2*}(\sE_{\cU}/\sE_{\cU}(-D'-\Delta))$ is precisely 
$p_{2*}(\sE_{\cU}(-\Delta)/\sE_{\cU}(-D'-\Delta))$.
\end{proof}

We now move on to proving Theorem \ref{thm:exp-dim-double}. The lemma 
which allows us to apply Corollary \ref{cor:iso-intersect-double} is
the following, which holds more generally than the case that $h^1(C,\sL)=2$.

\begin{lem}\label{lem:general-m} Given $\sL$ of degree $d$ on $C$,
suppose that $h^1(C,\sL)=m>0$. Choose $m$ linearly independent morphisms
$\vp_i:\sL \to \omega$. Let $\cU$ be a finite-type open substack of
$\cM(2,\sL)$, and $\sE_{\cU}$ the universal bundle on $C \times \cU$.
Then for any sufficiently general choice of a sufficiently ample divisor
$D$ on $C$, if $D'=p_1^* D$, the alternating forms $\langle,\rangle_i$
on $\widetilde{\sE}:=p_{2*}(\sE(D')/\sE(-D'))$ induced by the $\vp_i$ and 
summing over residues as in Lemma \ref{lem:construct-form}
have the following property: for every point $x \in \cU$, and
$\lambda_1,\dots,\lambda_m$ not all $0$, the alternating form
$\sum_i \lambda_i \langle,\rangle_i$
induces an injective map $p_{2*}(\sE|_x(D)) \to (\widetilde{\sE}|_x)^*$.
\end{lem}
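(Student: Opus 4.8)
The plan is to reduce everything to a local residue-pairing computation on the fibers of $\widetilde{\sE}$, using the skyscraper structure of $\sE(D')/\sE(-D')$ to make the forms $\langle,\rangle_i$ transparent. Fix a point $x \in \cU$ and an arbitrary nonzero tuple $(\lambda_1,\dots,\lambda_m)$; since the $\vp_i$ are linearly independent and $h^0(C,\sL^{-1}\otimes\omega)=m$, the combination $\vp:=\sum_i\lambda_i\vp_i$ is a nonzero morphism $\sL\to\omega$, with some effective vanishing divisor $Z$ of degree $d-(2g-2)\geq 0$ (if $d\le 2g-2$). We want the pairing $\langle,\rangle:=\sum_i\lambda_i\langle,\rangle_i$ to induce an injection $p_{2*}(\sE|_x(D))\hookrightarrow(\widetilde{\sE}|_x)^*$; equivalently, no nonzero global section $s\in H^0(C,\sE|_x(D))$ is orthogonal under $\langle,\rangle$ to all of $\widetilde{\sE}|_x=H^0(C,\sE|_x(D')/\sE|_x(-D'))$. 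First I would unwind the pairing: for $s\in H^0(C,\sE|_x(D))$ and $t\in\widetilde{\sE}|_x$ with local representatives $\tilde t_P\in\sE|_x(D')$, we have $\langle s,t\rangle=\sum_P\res_P\vp(s\wedge\tilde t_P)$, a sum supported on $\operatorname{supp}D$ since $s$ is globally a section of $\sE|_x(D)$ (so $\vp(s\wedge\tilde t_P)$ is regular away from $D$, as $\vp$ maps into $\omega$).

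The key step is then to show that, for a suitably general ample $D$, the induced pairing at each point $P\in\operatorname{supp}D$ between the local expansion of $s$ and the local sections of $\sE|_x(D')/\sE|_x(-D')$ is already nondegenerate on the relevant quotient, forcing $s$ to vanish. Concretely, choosing $D$ supported away from $Z$ (the base locus of the linear system $|\sL^{-1}\otimes\omega|$ spanned by the $\vp_i$ — note this base locus is independent of $x$ and of the $\lambda_i$ only through the finitely many possible common zero loci, but at a general point of $C$ no combination $\vp$ vanishes, so taking $D$ general avoids all of them simultaneously), the morphism $\vp$ is an isomorphism $\sL\cong\omega$ near each point of $D$, so the residue pairing $\res_P\vp(\cdot\wedge\cdot)$ is a perfect pairing between $\sE|_x(D')/\sE|_x$ and $\sE|_x/\sE|_x(-D')$ at $P$ in the usual way residue pairings on curves are perfect (Serre duality locally). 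If $s\ne 0$ has order exactly $-n_P$ along $P$ for $P\in D$ (with $n_P$ between $-\operatorname{ord}_P D$ and $\operatorname{ord}_P D$ since $s$ is regular off $D$ as a section with a pole bounded by $D$... actually $s\in H^0(\sE|_x(D))$ so the pole order is at most $\operatorname{ord}_P D$), then we can pick a local section $t$ of $\sE|_x(D')$ pairing nontrivially with $s$ at $P$ via $\res_P$, provided $\deg D$ is large enough that $D'$ dominates the pole of $s$ with room to spare. So the pairing against $\widetilde{\sE}|_x$ detects $s$ unless $s=0$ on all of $\operatorname{supp}D$; but $D$ is ample, so any section of $\sE|_x(D)$ vanishing on $\operatorname{supp}D$ lies in $H^0(\sE|_x)$, and enlarging $D$ enough that $\operatorname{supp}D$ contains enough general points (more than $h^0(\sE|_x(\text{something}))$ can handle — here we can demand $\deg D$ exceed a bound depending only on the finite-type locus $\cU$) forces such a section to be zero.

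The main obstacle I anticipate is the uniformity in $x$: the threshold "sufficiently ample" must work for every $x\in\cU$ at once, and the orders of vanishing of candidate sections $s$, as well as the dimensions $h^0(C,\sE|_x)$, vary with $x$. This is handled exactly as in the proof of Theorem~\ref{thm:exp-dim-twist}: because $\cU$ is of finite type, the relevant cohomological quantities (e.g. $h^1(C,\sE|_x(D))=h^0(C,\sE|_x(-D))=0$ for all $x$) become uniform once $D$ is ample enough, and "sufficiently general" rules out the finitely many proper closed conditions (lying in $\operatorname{supp}D\cap Z_{\vp}$ for the finitely many possible components of vanishing loci of the morphisms $\sum\lambda_i\vp_i$, which form a bounded family parametrized by $\PP^{m-1}$). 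One packages all the genericity requirements — $D$ avoiding all the $Z_{\vp}$, $D$ ample enough to kill the bad cohomology over $\cU$, and $\operatorname{supp}D$ containing enough general points to trivialize $H^0(\sE|_x)$ — as a single "sufficiently general choice of a sufficiently ample divisor," and then the fiberwise residue-pairing argument above completes the proof.
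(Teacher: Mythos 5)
Your argument has a genuine gap at the step where you claim to choose $D$ disjoint from the vanishing loci of all the forms $\vp_\lambda = \sum_i \lambda_i \vp_i$. You write that ``at a general point of $C$ no combination $\vp$ vanishes, so taking $D$ general avoids all of them simultaneously,'' and then conclude that $\vp_\lambda$ is an isomorphism $\sL \cong \omega$ near each point of $D$ for every nonzero $\lambda$. This is false as soon as $m \geq 2$ (the only case in which the lemma is used): at any $P \in C$ outside the base locus, the evaluation map $\lambda \mapsto \vp_\lambda(P)$ is a nonzero linear form on $F^m$, so its kernel is a hyperplane, and hence there is always a $\PP^{m-2}$ of values of $\lambda$ for which $\vp_\lambda(P) = 0$. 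Equivalently, as $\lambda$ ranges over $\PP^{m-1}$, the vanishing divisors $Z_\lambda$ (each of degree $2g-2-d \geq 1$) sweep out all of $C$. So no choice of $D$, however general, can avoid every $Z_\lambda$ at once; the perfect-pairing argument therefore breaks down at some of the $P_j$ for some $\lambda$, and the conclusion that $s$ must vanish at every $P_j$ does not follow.

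The paper's proof gets around exactly this obstruction. Rather than avoiding all $Z_\lambda$, it fixes $\vp_1$, writes $\vp_i = f_i \vp_1$ with $f_1, \dots, f_m$ linearly independent rational functions, chooses $D = P_1 + \dots + P_{\deg D}$ reduced and disjoint only from the zeros of the individual $\vp_i$ (a fixed finite set), and then observes via the local residue computation that if $s$ does not vanish at $P_j$, the orthogonality relation forces $\sum_i \lambda_i f_i(P_j) = 0$. Bounded instability over the quasicompact $\cU$ guarantees that any nonzero $s$ is nonvanishing at at least $m$ of the $P_j$ once $\deg D$ is large; and then generality of the $P_j$ (which map to points of $\PP^{m-1}$ spanning it, precisely because the $f_i$ are independent) shows the linear system $\sum_i \lambda_i f_i(P_j)=0$ over those $j$ has only the trivial solution. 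If you want to repair your argument, you would need to replace ``$D$ avoids every $Z_\lambda$'' with this finer counting: for each fixed $\lambda$ you can only say $\vp_\lambda$ vanishes at at most $2g-2-d$ of the $P_j$, and you must combine this with the instability bound and a genericity argument on $(f_i(P_j))$ to conclude. As written, the proposal's central genericity claim is simply wrong, and the rest of the argument does not recover from it.
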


Note that in particular, the lemma implies that for $D$ sufficiently
general and sufficiently ample, the $\langle,\rangle_i$ are linearly 
independent.

\begin{proof} Since $D$ is assumed to be sufficiently ample, a sufficiently
general choice of $D$ will be a reduced divisor $P_1+\dots+P_{\deg D}$
for distinct points $P_i \in C$. Let $f_1=1,f_2,\dots,f_m$
be the rational functions on $C$ such that $\vp_i=f_i \vp_1$. Any 
sufficiently general $D$ will have support disjoint from the vanishing of
each $\vp_i$, and consequently from the zeroes and poles of the
$f_i$. Now, $\sE(D')/\sE(-D')$ has
support only at the $P_i$, of length $4$ at each point. Suppose we have
$s\in H^0(C,\sE|_x(D)/\sE|_x(-D))$ and constants $\lambda_i$
for $i=1,\dots,m$ such that for all $s' \in H^0(C,\sE|_x(D)/\sE|_x(-D))$,
we have 
\begin{equation}\label{eq:relation-forms} 
\sum_i \lambda_i \langle s,s'\rangle_i =0.
\end{equation}
Equivalently, \eqref{eq:relation-forms} is satisfied
for any $s'$ supported only at a single $P_j$. 
A local calculation at $P_j$ shows if $s$ is non-zero at 
$P_j$ (that is, if a local representative for $s$ is not in $\sE|_x(-D)$), 
a necessary condition for \eqref{eq:relation-forms} to hold for all
$s'$ supported only at $P_j$ is the identity
\begin{equation}\label{eq:relation-functions}
\sum_i \lambda_i f_i(P_j)=0.
\end{equation}

Now, because $\cU$ is quasicompact, the instability of the corresponding
vector bundles is bounded, and it follows that if $s$ is actually a
section in $H^0(C,\sE|_x(D))$, then the number of $P_j$ at which $s$
can vanish (in the sense that $s \in \sE|_x(-P_j)$ in a neighborhood of
$P_j$) grows more slowly than $\deg D$ as $\deg D$ increases,
and in particular for $D$ sufficiently ample, we have that any such $s$ 
must be non-zero at 
at least $m$ of the $P_j$. Then for $D$ sufficiently general,
the linear independence of the $f_i$ implies that 
\eqref{eq:relation-functions} cannot hold at all $m$ of the $P_j$ where
$s$ is non-zero.
We conclude the desired injectivity.
\end{proof}

\begin{proof}[Proof of Theorem \ref{thm:exp-dim-double}]
Because $h^1(C,\sL) \geq 2$, we have 
$h^0(C,\sL^{-1}\otimes \omega) \geq 2$, and we can thus fix a pair
of linearly independent morphisms $\vp_1,\vp_2:\sL \to \omega$. 

As before, let $\cU$ be any open substack of $\cM(2,\sL)$ of finite 
type, and $\sE_\cU$ the universal bundle on $C\times \cU$. Then a sufficiently
general choice of sufficiently ample divisor $D$ on $C$ will be disjoint 
from the vanishing loci of $\vp_1$ and $\vp_2$, and for every vector bundle 
$\sE|_x$ corresponding to a point of $\cU$, will satisfy 
$H^1(C,\sE|_x(D))=H^0(C,\sE|_x(-D))=0$. Fixing such a $D$, let
$D'=p_1^*(D)$ on $C\times \cM(2,\sL)$. Let $\cG$ be the 
relative Grassmannian $\cG(k,p_{2*} (\sE_{\cU}(D')/\sE_{\cU}(-D')))$. Since
$p_{2*} (\sE_{\cU}(D')/\sE_{\cU}(-D'))$ is locally free of rank $4 \deg D$, 
we have $\cG$ smooth of relative dimension $k(4 \deg D - k)$ over $\cU$. 
Let $\sF$ be the universal subbundle on $\cG$. As before, the open substack
$\cG_{\cU}(2,\sL,k) \subseteq \cG(2,\sL,k)$ parametrizing pairs for
which the underlying bundle corresponds to a morphism into $\cU$ is
realized as the closed substack of $\cG$ for which $\sF$
is contained in both $p_{2*}(\sE/\sE(-D'))$ and $p_{2*}\sE(D')$, which
have rank $2\deg D$ and $d+2\deg D +2-2g$ respectively.

Also as before, symplectic forms play a crucial role, but now we have
from Lemma \ref{lem:construct-form} that 
$p_{2*}(\sE(D')/\sE(-D'))$ carries two symplectic forms 
$\langle,\rangle_i$ induced by $\vp_i$ for $i=1,2$.
We thus have that $p_{2*}\sE(D')$ and
$p_{2*}(\sE/\sE(-D'))$ are both simultaneously isotropic for both
forms. Moreover, by Lemma \ref{lem:general-m}, we have that (by choosing
a more ample and more general $D$ as necessary) the space 
$\underline{\langle,\rangle}$ of forms spanned by the $\langle,\rangle_i$
is $2$-dimensional and consists entirely of forms with no degeneracy on any 
fiber of $p_{2*} \sE_{\cU}(D')$. Thus, we can
apply Corollary \ref{cor:iso-intersect-double} to find
that every component of $\cG_{\cU}(2,\sL,k)$ has codimension in $\cG$ bounded 
above by
$$k(4\deg D -d-2+2g)-2\binom{k}{2}.$$
By Corollary \ref{cor:smooth-codim}, we thus find that every component of 
$\cG_{\cU}(2,\sL,k)$ has dimension at least
$$3g-3-k(k-d+2g-2)+2\binom{k}{2}=\rho-g+2\binom{k}{2}.$$

As ${\cU}$ varies over all bounded open substacks of $\cM(2,\sL)$, it covers
the entire moduli space, so we conclude that the obtained dimension bounds
apply to all of $\cG(2,\sL,k)$, as desired.
\end{proof}

\section{Further discussion}

It is well known (see for instance Comments 4.6 of \cite{g-t1})
that in higher rank, there are cases where the expected 
dimension is positive but the Brill-Noether loci are empty. The same
is true for loci with fixed determinant and the appropriate modified
expected dimension. In particular, a component of the Brill-Noether locus
may be large enough to dominate the Jacobian, but may fail to do so. This 
makes it difficult to predict when a modified expected dimension in fixed 
determinant will in fact give rise to an entire component of the 
Brill-Noether locus of varying determinant. We conclude by examining this 
issue in detail in two specific examples.

\begin{ex} We consider the case $k=2$, and $d=g-2$, with $C$ assumed to
be general. The expected dimension of $\cG_C(2,g-2,2)$ is 
$\rho(2,g-2,2,g)-1=2g-8$ (the reduction by $1$ in dimension arised because
we are considering the stack rather than the coarse moduli space), so for 
$g \geq 7$, we have that $\cG_C(2,g-2,2)$
has dimension large enough to surject onto $\Pic^{g-2}(C)$; in particular,
for any line bundle $\sL$ of degree $g-2$, even the naive expected dimension 
of $\cG(2,\sL,2)$ is non-negative. Moreover, we have $h^1(C,\sL)>0$, so if 
$\cG_C(2,g-2,2)$ does in fact dominate $\Pic^{g-2}(C)$, it follows from
Theorem \ref{thm:exp-dim-twist} that it must have strictly larger than the
expected dimension.

However, at least on the stable locus $\cG_C^s(2,g-2,2)$, this is not the
case. Indeed, Teixidor \cite{te4} analyzes the case $r=k=2$ more generally, 
and shows that on a general curve, $\cG_C^s(2,d,2)$ is irreducible
of the expected dimension. She shows that the general linear series in
this space is of the form $(\sE,H^0(C,\sE))$, where $\sE$ is obtained as
an extension
$$0 \to \sO_C \to \sE \to \sL \to 0,$$
and we then necessarily have $h^0(C,\sL)=h^0(C,\det \sE)>0$. In our case
of degree $g-2$, such $\sL$ have codimension $2$ in $\Pic^{g-2}(C)$.
It follows that $\cG_C^s(2,g-2,2)$ does not dominate $\Pic^{g-2}(C)$, and
moreover that for a general $\sL \in \Pic^{g-2}(C)$ with $h^0(C,\sL)>0$,
we have $\dim \cG^s(2,\sL,2)>\rho^1_{\sL}(2,g)$. This is thus consistent
with Theorem \ref{thm:exp-dim-double}.
\end{ex}

However, we see that our techniques, and in particular the last statement
of Theorem \ref{thm:exp-dim-twist}, can shed some light on such emptiness
examples. Although the following corollary is rather trivial (and is in
any case an immediate consequence of Teixidor's analysis), it may be of
interest that the argument is quite atypical.

\begin{cor}\label{cor:empty} Suppose $\ch F =0$. Set $d=g-1-\ell$, with 
$C$ general and $\ell \geq 0$, and fix $\sL$ a line bundle
of degree $d$ with $h^0(C,\sL)=0$. Then there does not exist any semistable
vector bundle $\sE$ of determinant $\sL$ with $h^0(C,\sE) \geq 2$.
\end{cor}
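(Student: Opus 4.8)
The plan is to argue by contradiction and show that such an $\sE$ would force $C$ to carry a pencil of impossibly low degree. So assume $\sE$ is semistable of rank $2$ with $\det\sE\cong\sL$ and $h^0(C,\sE)\geq 2$, fix a $2$-dimensional subspace $V\subseteq H^0(C,\sE)$, and view $(\sE,V)$ as a point of $\cG(2,\sL,2)$.

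The key observation is that all of $H^0(C,\sE)$ must lie in a single sub-line-bundle of $\sE$. Indeed, since $\wedge^2\sE=\det\sE=\sL$ and $h^0(C,\sL)=0$, any two sections $s_1,s_2$ of $\sE$ satisfy $s_1\wedge s_2=0$ in $H^0(C,\sL)$; this is exactly what the last assertion of Theorem \ref{thm:exp-dim-twist} records in the present situation, since when $\ell=0$ (so $\deg\sL=g-1$) one has $\delta=1$ and $\Delta=P$ is admissible for every $P\in C$, and the bound $\dim\bigl(V\cap H^0(C,\sE(-\Delta))\bigr)\geq k-\delta$ then says precisely that some section of $V$ vanishes at every point $P$, i.e.\ that $s_1\wedge s_2=0$. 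In any case the evaluation map $V\otimes\sO_C\to\sE$ has rank at most $1$ at every point, so its image generates a rank-$1$ subsheaf of $\sE$; taking $\sM\subseteq\sE$ to be its saturation, $\sM$ is a sub-line-bundle with $V\subseteq H^0(C,\sM)$, whence $h^0(C,\sM)\geq 2$.

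Now I would simply play the two resulting inequalities on $\deg\sM$ against each other. Semistability of $\sE$ forces $\deg\sM\leq\tfrac12\deg\sE=\tfrac12(g-1-\ell)\leq\tfrac{g-1}2$. On the other hand, after removing its base locus, $\sM$ defines a base-point-free pencil, i.e.\ a $g^1_{d'}$ with $d'\leq\deg\sM$, hence with $2d'\leq g-1<g$ and therefore $\rho(g,1,d')=2d'-g-2<0$; but a general curve admits no linear series with negative Brill--Noether number. This contradiction gives $h^0(C,\sE)\leq 1$, as asserted.

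There is no real obstacle here — which is why the corollary is called ``rather trivial'' — and the only genuinely external input is the Brill--Noether theorem for general curves, which is also where the hypothesis $\ch F=0$ is used. The one step worth stating with care is the passage from $s_1\wedge s_2=0$ (equivalently, the forced rank drop of the evaluation map, which is the ``rank-$1$ shadow'' phenomenon underlying Theorem \ref{thm:exp-dim-twist}) to the conclusion that all of $H^0(C,\sE)$ is contained in a saturated sub-line-bundle; once that is in hand, everything else is formal.
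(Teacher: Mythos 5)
Your argument is correct, and it is genuinely different from the one in the paper. The paper's proof first treats the case $d=g-1$, where it invokes the final assertion of Theorem \ref{thm:exp-dim-twist} (with $\delta=1$, $\Delta=P$ for every $P$) to conclude that the sections never generate $\sE$, hence lie in a line subbundle $\sM$; it then quotes the gonality bound $\deg \sM \geq \frac{g+2}{2}$ for a general curve to destabilize $\sE$. For $\ell>0$ the theorem gives nothing directly (there $\delta=0$), so the paper reduces to the $d=g-1$ case by twisting by $\frac{\ell'}{2}P$ at a general point --- this is where $\ch F=0$ enters, via genericity of the vanishing sequence of $\omega\otimes\sL^{-1}$ --- together with an inverse elementary transformation when $\ell$ is odd. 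You shortcut all of this: since $s_1\wedge s_2\in H^0(C,\wedge^2\sE)=H^0(C,\sL)=0$, any two sections are everywhere proportional, so $H^0(C,\sE)$ lies in the saturation $\sM$ of the image of the evaluation map, and then semistability ($\deg\sM\leq\frac{g-1-\ell}{2}$) against Brill--Noether nonexistence for pencils ($\rho(g,1,d')=2d'-g-2<0$) gives the contradiction uniformly in $\ell$, with no reduction step. Your aside identifying the wedge-vanishing with the last statement of Theorem \ref{thm:exp-dim-twist} is accurate only for $\ell=0$, but your main argument does not rely on it, so nothing is lost; in effect you give the ``trivial'' direct proof the paper alludes to, while the paper deliberately routes through Theorem \ref{thm:exp-dim-twist} to illustrate an atypical use of its machinery. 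The external input (Brill--Noether/gonality for a general curve) is the same in both proofs; note only that in the paper the hypothesis $\ch F=0$ is used for the generic vanishing sequence rather than for Brill--Noether, so your proof arguably needs characteristic $0$ even less than the paper's does.
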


\begin{proof} We begin with the case $k=2$, $d=g-1$, where $h^0(C,\sL)=0$
is equivalent to $h^1(C,\sL)=0$. But since $h^1(C,\sL(-P))>0$ for any 
$P \in C$, we have 
$\delta=1$, and we can choose $\Delta=P$ for any $P$. It follows from 
the final statement of Theorem \ref{thm:exp-dim-twist} that for any 
pair $(\sE,V)$ with determinant $\sL$, and for any $P \in C$, at least one 
section of $V$ vanishes at $P$. In particular, the sections of $V$ do not
generate $\sE$ at any point, and it follows that $V$ is 
contained in some line subbundle $\sM$ of $\sE$, which must then have
degree at least $\frac{g+2}{2}$, and therefore destabilizes $\sE$.

For the general case, $h^0(C,\sL)=0$ is equivalent to 
$h^0(C,\omega \otimes \sL^{-1})=h^1(C,\sL)=\ell$. Because we are
in characteristic $0$, for a general $P \in C$ the vanishing sequence for 
$h^0(C,\omega \otimes \sL^{-1})$ is generic, so for 
$\ell' \leq \ell$ we have that
$$h^1(C,\sL(\ell' P))=h^0(C,\omega \otimes \sL^{-1}(-\ell' P))=\ell-\ell',$$
and we conclude that $h^0(C,\sL(\ell' P))=0$. Thus, if we have 
$\sE$ of determinant $\sL$
with $h^0(C,\sE) \geq k$, and we set $\ell'=2 \lfloor \frac{\ell}{2}\rfloor$,
then for general $P$ we still have $h^0(C,\sE(\frac{\ell'}{2} P)) \geq k$ 
and $h^0(C,\sL(\ell' P))=0$. We claim that $\sE(\frac{\ell'}{2} P)$, and
hence $\sE$, is unstable. If $\ell$ is even, this is immediate from the
$d=g-1$ case above.  If $\ell$ is odd, then $\sE(\frac{\ell'}{2} P)$ has 
degree $g-2$. Let $\sE'$ be an inverse elementary transformation at a 
general point $Q$, so that $\det \sE' \cong \sL(\ell'P +Q)$ has degree $g-1$, 
and 
$h^0(C,\sL(\ell'P+Q))=0$ as before. By the above, $\sE'$ has a line subbundle
of degree at least $\frac{g+2}{2}$, so we conclude that 
$\sE(\frac{\ell'}{2}P)$ has a line subbundle of degree at least $\frac{g}{2}$,
and is therefore still unstable, as desired.
\end{proof}

We conclude by remarking that while we believe our main results show 
substantial
promise for generalization, possibly as far as a comprehensive collection of
expected dimensions in rank $2$, any generalization will not be routine.
One indication is that our results on expected dimensions thus far have not
required any stability hypotheses. However, if, as expected, one obtains
higher and higher expected dimensions as $h^1(C,\sL)$ grows, then these
modified estimates will almost certainly not apply to arbitrarily unstable
bundles. Indeed, for sufficiently unstable bundles, the determinant is
completely unrelated to the number of sections, so we should expect any
fully general answer to combine determinant conditions with stability
conditions.

\appendix
\section{Codimension in stacks}\label{sec:codim-stacks}

We give a basic treatment of codimension for algebraic stacks. We follow 
the terminology and conventions of \cite{l-m-b}. We assume throughout that
$\cX$ is a locally Noetherian algebraic stack.

\begin{defn} Let $\cX$ be a locally Noetherian 
algebraic stack, and $\cZ \subseteq \cX$ a 
closed substack. The {\bf codimension} of $\cZ$ in $\cX$ is defined to be 
the codimension in $X$ of $X \times_{\cX} \cZ$, where $X \to \cX$ is a 
smooth cover of $\cX$.
\end{defn}

\begin{prop}\label{prop:codim-defd} Codimension is well defined, and finite.
\end{prop}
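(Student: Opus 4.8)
The plan is to reduce the assertion to a statement about schemes and then invoke standard dimension theory for smooth morphisms. By the definition of algebraic stack in \cite{l-m-b} there is a scheme $X$ together with a smooth surjective morphism $X \to \cX$, and since $\cX$ is locally Noetherian so is $X$; hence $X \times_{\cX} \cZ$ is a closed subscheme of a locally Noetherian scheme and $\codim_X(X\times_\cX\cZ)$ is defined. For well-definedness, given two scheme covers $X_1 \to \cX$ and $X_2 \to \cX$, I would pick a scheme $X_{12}$ with a smooth surjective morphism $X_{12}\to X_1\times_\cX X_2$; then each composite $f_i\colon X_{12}\to X_i$ is smooth and surjective, and since $X_{12}\to\cX$ factors through $X_i$ one has the base-change identity $X_{12}\times_\cX\cZ = f_i^{-1}(X_i\times_\cX\cZ)$. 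Thus the whole proposition reduces to showing: for $f\colon Y\to X$ a smooth surjective morphism of locally Noetherian schemes and $Z\subseteq X$ closed, $\codim_Y f^{-1}(Z)=\codim_X Z$.

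For finiteness, fix one scheme cover and put $Z := X\times_\cX\cZ$. Each irreducible component $Z'$ of $Z$ has a generic point $\eta$ whose local ring $\cO_{X,\eta}$ is Noetherian local, so $\codim_{X'}Z' = \dim\cO_{X',\eta}<\infty$ for every component $X'$ of $X$ containing $Z'$; and by local finiteness of the irreducible components of a locally Noetherian scheme there are only finitely many such $X'$. Hence the inner maximum in the min--max defining $\codim_X Z$ is a maximum over a finite set of natural numbers, and the outer minimum is then a natural number, provided $\cZ\neq\emptyset$ (if $\cZ=\emptyset$ one sets its codimension to be $+\infty$ by convention).

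To prove the equality $\codim_Y f^{-1}(Z)=\codim_X Z$, I would argue as follows. Since $f$ is faithfully flat it satisfies going-down, which forces every maximal point of $f^{-1}(Z)=Z\times_X Y$ to lie over a maximal point of $Z$, and likewise every maximal point of $Y$ over a maximal point of $X$; combined with surjectivity, this matches the irreducible components of the source with those of the target. On matched components the dimension formula for flat morphisms, $\dim\cO_{Y,y}=\dim\cO_{X,f(y)}+\dim\cO_{Y_{f(y)},y}$ (see \cite{ega42}), together with the regularity of the fibers of the smooth morphism $f$ and the local constancy of its relative dimension, yields $\codim_{Y'}W=\codim_{X'}Z'$ whenever $W$ is a component of $f^{-1}(Z)$ sitting in the ambient component $Y'$ over the component $X'$ of $X$, with $Z':=\overline{f(W)}\subseteq X'$. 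Taking the minimum over $W$ and the maximum over $Y'$ then gives the desired equality, and applying it to $f_1$ and $f_2$ shows $\codim_{X_1}(X_1\times_\cX\cZ)=\codim_{X_{12}}(X_{12}\times_\cX\cZ)=\codim_{X_2}(X_2\times_\cX\cZ)$.

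I expect the main obstacle to be precisely this last step: because $X$ and $Y$ need not be irreducible, and the codimension convention is a minimum over the components of the subscheme of a maximum over the ambient components, one must verify that passage to a smooth cover respects codimension \emph{component by component} rather than merely in the aggregate, i.e.\ that the relative-dimension contributions cancel uniformly along each matched pair of components. Everything else is formal bookkeeping: the existence of a scheme presentation, stability of smoothness and surjectivity under base change and composition, and the base-change identity $X_{12}\times_\cX\cZ=f_i^{-1}(X_i\times_\cX\cZ)$.
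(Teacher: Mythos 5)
Your proof is correct and follows essentially the same route as the paper: well-definedness is reduced, via a cover dominating both presentations (the paper uses $X\times_{\cX}X'$ directly), to the invariance of codimension under a smooth cover of locally Noetherian schemes, and finiteness comes from local Noetherianity near the generic point of a component of the pullback of $\cZ$. The only real difference is that the paper simply cites Corollary 6.1.4 of \cite{ega42} for invariance under flat covers, whereas you reprove the smooth case by hand (going-down, the flat local dimension formula, regularity of fibers, and the component-by-component bookkeeping you flag), which is fine but not a different argument in substance.
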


\begin{proof} We first verify that codimension is well defined. If $X,X'$ 
are both smooth covers of $\cX$, then 
$X \times_{\cX} X'$ is a smooth cover of both $X$ and $X'$, so it is
enough to see that codimension is preserved under passage to a smooth 
cover. But this is true more generally for flat covers, see
Corollary 6.1.4 of \cite{ega42}.

We next verify finiteness. By definition, any smooth presentation 
$P:X \to \cX$ has
$X$ locally Noetherian. Now, $\codim_{\cX} \cZ := \codim_X P^{-1} Z$,
and in order to establish finiteness of $\codim_X P^{-1} Z$, it is enough
to show that $\codim_X Z'$ is finite for some irreducible component $Z'$
of $P^{-1} Z$. But let $Z'$ be any irreducible component; since $X$ is
locally Noetherian, there exists an open subscheme $U \subseteq X$ which
meets $Z'$ and is Noetherian. Then $\codim_X Z'= \codim_U Z'$, and
$\codim_U Z'$ is finite, as desired.
\end{proof}

\begin{lem}\label{lem:irred-cover} Let $\cZ \subseteq \cX$ be an 
irreducible component.
If $X \to \cX$ is a smooth morphism from a scheme $X$, then every
irreducible component of $Z:=\cZ \times_{\cX} X$ is an irreducible
component of $X$.
\end{lem}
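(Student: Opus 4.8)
The plan is to argue entirely at the level of the underlying topological spaces $|X|$ and $|\cX|$, reducing everything to the fact that flat morphisms lift generizations. Recall that an irreducible closed subset of a sober space is an irreducible component precisely when its generic point is maximal, i.e., has no proper generization. Since $X$ is a scheme, $|X|$ is sober, so it suffices to prove: for each irreducible component $W$ of $Z := \cZ \times_{\cX} X$, with generic point $\eta$, the point $x_0 := \eta$ (viewed in $|X|$) is a maximal point of $|X|$; then $W = \overline{\{x_0\}}$ will be an irreducible component of $X$. Note $W$ is closed in $Z$, which is closed in $X$, so the closure of $x_0$ computed in $X$ coincides with the closure of $\eta$ computed in $Z$, namely $W$.

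First I would record the geometry of $Z$. The projection $Z \to \cZ$ is the base change of the smooth morphism $f : X \to \cX$, hence is itself smooth, in particular flat; the projection $Z \to X$ is the base change of the closed immersion $\cZ \hookrightarrow \cX$ (an irreducible component, with its reduced structure, is a closed substack, using that $\cX$ is locally Noetherian), hence is a closed immersion, identifying $|Z|$ with the closed subset $f^{-1}(|\cZ|) \subseteq |X|$. Let $\zeta_0 \in |\cZ|$ be the image of $\eta$ under $Z \to \cZ$; by construction $f(x_0) = \zeta_0$, and since $W$ is a component of $Z$ the point $\eta$ is maximal in $|Z|$.

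The one real step is to show $\zeta_0$ is the generic point of $\cZ$, hence a maximal point of $|\cX|$. Since $Z \to \cZ$ is flat, it is generizing: every generization of $\zeta_0$ in $|\cZ|$ is the image of a generization of $\eta$ in $|Z|$. For schemes this is the topological form of the going-down theorem for flat ring maps; for algebraic stacks it is deduced from the scheme case by base-changing to smooth presentations, as in \cite{l-m-b}. But $\eta$ has no proper generization in $|Z|$, so $\zeta_0$ has none in $|\cZ|$; as $\cZ$ is irreducible, $\zeta_0$ is its generic point. Finally, the generic point of an irreducible component of $\cX$ has no proper generization in $|\cX|$, so $\zeta_0$ is a maximal point of $|\cX|$.

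It then remains a formal chase. Suppose $x'$ is a proper generization of $x_0$ in $|X|$. By continuity of $f$, the point $f(x')$ is a generization of $f(x_0) = \zeta_0$, and maximality of $\zeta_0$ in $|\cX|$ forces $f(x') = \zeta_0 \in |\cZ|$, hence $x' \in f^{-1}(|\cZ|) = |Z|$. Then $x'$ is a generization of $x_0 = \eta$ lying in the closed subspace $|Z|$, so maximality of $\eta$ there gives $x' = \eta = x_0$, contradicting properness. Thus $x_0$ is maximal in $|X|$, which completes the argument; the stack case of Corollaries \ref{cor:iso-intersect} and \ref{cor:iso-intersect-double} then follows as indicated there. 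The only genuine obstacle is the generizing property of the flat morphism $Z \to \cZ$; everything else is bookkeeping with generic points, and in particular no dimension counts or finiteness hypotheses enter, which is what makes the lemma robust enough to be quoted freely.
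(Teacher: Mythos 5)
Your proof is correct and follows essentially the same route as the paper: both arguments use that $Z\to\cZ$ is flat (as a base change of the smooth cover) so that generizations lift, forcing the image of a generic point of a component of $Z$ to be the generic point of $\cZ$, and then a generic-point/maximality chase in $X$ using that $\cZ$ is an irreducible component of $\cX$ and that $Z$ is closed in $X$. Your write-up just makes explicit (via sober spaces and maximal points) the steps the paper delegates to Proposition 5.6 and Corollaries 5.7.1--5.7.2 of \cite{l-m-b}.
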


\begin{proof} Let $\zeta$ the generic point of an irreducible component of 
$Z:=\cZ \times_{\cX} X$, and $\xi$ be the generic point of $\cZ$
(for the existence and uniqueness of generic points of algebraic stacks, 
see Corollary 5.7.2
of \cite{l-m-b}). Because $Z$ is flat over $\cZ$, it follows from
Proposition 5.6 and Corollary 5.7.1 of \cite{l-m-b} that the image
of $\zeta$ is $\xi$. But by the same argument, any generization of
$\zeta$ in $X$ must map to $\xi$ as well, in which case it is 
contained in $Z$ by definition. We conclude that $\zeta$ is a generic
point of $X$, as desired.
\end{proof}

We immediately conclude:

\begin{cor}\label{cor:irred-codim} Let $\cZ \subseteq \cX$ be an 
irreducible component. Then $\cZ$ has codimension $0$ in $\cX$.
\end{cor}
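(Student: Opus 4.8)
The plan is to simply unwind the definition of codimension in a stack and feed it Lemma \ref{lem:irred-cover}. First I would pick a smooth cover $X \to \cX$ by a scheme and set $Z = \cZ \times_{\cX} X$, so that by definition $\codim_{\cX} \cZ = \codim_X Z$. Since the cover is surjective and $\cZ$ is nonempty (being an irreducible component of $\cX$), the subscheme $Z \subseteq X$ is nonempty, so $\codim_X Z$ is computed as the $\min$--$\max$ expression of \S\ref{sec:prelims} over the genuinely nonempty sets of irreducible components of $Z$ and of $X$.

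Next I would invoke Lemma \ref{lem:irred-cover}: every irreducible component $Z'$ of $Z$ is in fact an irreducible component of $X$. Consequently, for any irreducible component $X'$ of $X$ containing $Z'$ we must have $X' = Z'$, whence $\codim_{X'} Z' = 0$; so the inner maximum over components $X'$ is $0$ for each $Z'$, and therefore the outer minimum over components $Z'$ is $0$ as well. This yields $\codim_X Z = 0$, hence $\codim_{\cX} \cZ = 0$. There is essentially no obstacle to overcome here: all the content sits in Lemma \ref{lem:irred-cover} (which in turn rests on flatness of smooth covers and the behavior of generic points of stacks), and what is left is a single line of bookkeeping against the codimension convention. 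For this reason the statement is flagged in the text as following ``immediately,'' and I would present the argument in two or three sentences rather than as a formal display.
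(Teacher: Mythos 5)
Your argument is correct and is exactly the paper's intended (one-line) deduction: the paper derives Corollary \ref{cor:irred-codim} immediately from Lemma \ref{lem:irred-cover} in just the way you describe, with the observation that a component of $X$ contained in another component must coincide with it, so the min--max in the codimension convention collapses to $0$.
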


We now specialize to the case of stacks of finite type over a field.

\begin{prop}\label{prop:dim-defined} Suppose $\cX$ is an irreducible
algebraic stack, of finite type over a field. Then for any smooth
cover $P:X \to \cX$ and any irreducible component $Z$ of $X$, we have
$$\dim \cX = \dim Z-\dim_Z P,$$
where $\dim_Z P$ denotes the relative dimension of $P$ along $Z$.
\end{prop}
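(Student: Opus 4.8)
The content of the proposition is that the quantity $\dim Z - \dim_Z P$ depends neither on the smooth cover $P\colon X\to\cX$ nor on the irreducible component $Z$ of $X$ (and, if $\dim\cX$ is taken to be the supremum of the local dimensions $\dim_x\cX$, that this common value is in fact attained at every point). The plan is to compare any two admissible choices through a single common smooth cover, namely the fiber product of the two presentations, thereby reducing the whole statement to the classical fiber-dimension formula for smooth morphisms of schemes.

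First I would record two preliminary remarks. Since $P$ is smooth it is open; hence for any irreducible component $Z$ of $X$ the open set $U_Z\subseteq X$ of points lying on no other component is nonempty, is contained in $Z$, and is irreducible with generic point $\eta_Z$, so $P(U_Z)$ is open and dense in $\cX$. As $\cX$ is irreducible with a unique generic point $\xi$ (Corollary 5.7.2 of \cite{l-m-b}), continuity forces $P(\eta_Z)=\xi$. Moreover the relative dimension of a smooth morphism is locally constant on the source, so $\dim_Z P$ is the single value $\dim_{\eta_Z}P$; and since $\eta_Z$ lies on $Z$ and on no other component, while an integral scheme of finite type over a field has constant local dimension, $\dim_{\eta_Z}X=\dim Z$.

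Now let $P\colon X\to\cX$ and $P'\colon X'\to\cX$ be two smooth covers, and $Z\subseteq X$, $Z'\subseteq X'$ irreducible components, with generic points $\eta_Z,\eta_{Z'}$, both lying over $\xi$. Set $R:=X\times_\cX X'$, a scheme equipped with smooth projections $s\colon R\to X$ and $t\colon R\to X'$ (base changes of $P'$ and $P$ respectively). Using the surjectivity of $|R|\to |X|\times_{|\cX|}|X'|$, choose $r\in R$ with $s(r)=\eta_Z$ and $t(r)=\eta_{Z'}$. Applying the fiber-dimension formula for the smooth morphism $s$ at $r$, together with the fact that the relative dimension of $s$ at $r$ equals that of $X'\to\cX$ at $t(r)=\eta_{Z'}$ (relative dimension of a smooth morphism being preserved under the base change $s^{-1}(\eta_Z)=\operatorname{Spec}\kappa(\eta_Z)\times_\cX X'$), one gets $\dim_r R=\dim_{\eta_Z}X+\dim_{Z'}P'=\dim Z+\dim_{Z'}P'$. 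Running the symmetric computation with $t$ in place of $s$ gives $\dim_r R=\dim Z'+\dim_Z P$. Equating the two expressions yields $\dim Z-\dim_Z P=\dim Z'-\dim_{Z'}P'$, which is the asserted independence; if $\dim\cX$ is the supremum of local dimensions, a further application of the local constancy of relative dimension shows the supremum is attained at every point and equals this value.

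The routine inputs — openness and local constancy of relative dimension for smooth morphisms, existence and behavior of generic points of algebraic stacks, and the classical identity $\dim_r R=\dim_{s(r)}X+\dim_r\bigl(s^{-1}(s(r))\bigr)$ for smooth morphisms of schemes (e.g.\ from \cite{ega42}) — I would simply invoke. The one step that genuinely needs care, and the main obstacle, is the interface with stacks: producing the point $r\in X\times_\cX X'$ lying simultaneously over the two chosen generic points (resting on surjectivity of $|X\times_\cX X'|\to|X|\times_{|\cX|}|X'|$), and correctly transporting the relative dimension of $P$ (resp.\ $P'$) to that of the projection $t$ (resp.\ $s$) via base change. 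Once that translation is in place, the remainder is purely the scheme-theoretic fiber-dimension formula.
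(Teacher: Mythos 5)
Your argument is correct in substance, but it takes a genuinely different route from the paper's. The paper works inside a single presentation: it takes as given the pointwise dimension $\dim_x\cX$ of \cite{l-m-b} (in particular its independence of the choice of point of $X$ lying over $x$), observes that every irreducible component of $X$ dominates the irreducible stack $\cX$, and uses local constancy of the relative dimension to see both that every component $Z$ gives the same value $\dim Z-\dim_Z P=\dim_\eta\cX$ (with $\eta$ the generic point of $\cX$) and that $\dim_x\cX$ is constant in $x$. You instead compare two arbitrary pairs $(P,Z)$ and $(P',Z')$ directly through the fiber product $R=X\times_{\cX}X'$: producing a point $r$ over $(\eta_Z,\eta_{Z'})$ via surjectivity of $|R|\to|X|\times_{|\cX|}|X'|$ and computing $\dim_r R$ in two ways by the fiber-dimension formula for smooth morphisms. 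This re-proves, rather than cites, the presentation-independence, so your proof is more self-contained and makes the cross-presentation comparison explicit; the paper's is shorter because it delegates exactly that point to the definition of $\dim_x\cX$. Three small repairs are needed but none is a genuine gap: (i) $R$ is in general only an algebraic space, not a scheme, since the diagonal of an algebraic stack is representable by algebraic spaces; either invoke the dimension formula for algebraic spaces locally of finite type over a field, or pass to an \'etale scheme cover of $R$ and a point above $r$, which changes no local or relative dimensions. (ii) The identity $\dim_r R=\dim_{s(r)}X+\dim_r\bigl(s^{-1}(s(r))\bigr)$ at the (non-closed) point $r$ should be justified in the locally-of-finite-type-over-a-field setting, e.g.\ by combining the local-ring flatness formula of \cite{ega42} with the transcendence-degree dimension formula; in that setting it is indeed valid. (iii) The final step should be spelled out: for any $x\in|\cX|$ and $\tilde x\in X$ over $x$ one has $\dim_{\tilde x}X=\max_{Z\ni\tilde x}\dim Z$ and $\dim_{\tilde x}P=\dim_Z P$ for every component $Z$ through $\tilde x$ by local constancy, so your independence statement gives $\dim_x\cX=\dim Z-\dim_Z P$ for all $x$, hence the claimed equality for $\dim\cX$ taken as the supremum of local dimensions, which is the convention in force.
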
 

Note that because the relative dimension of a smooth morphism is locally 
constant, $\dim _Z P$ is well defined. 

\begin{proof} Let $\eta$ be the generic point of $X$. Recall that every
irreducible component of $X$ dominates $\cX$,
and in particular if $Z'$ is any irreducible component meeting $Z$, and
$\xi, \xi'$ are the generic points $Z,Z'$ respectively, then using that
$X$ is locally of finite type over a field, the definitions give us
$$\dim Z - \dim_Z P= \dim_{\xi} Z-\dim_{\xi} P=:\dim_{\eta} \cX 
:=\dim_{\xi'} Z' - \dim_{\xi'} P = \dim Z' - \dim_{Z'} P.$$
But again using that $\dim P$ is locally constant, since $Z'$ intersects
$Z$ we conclude that $\dim Z = \dim Z'$.

Now suppose that $x \in P(Z)$, and
choose a point $z \in Z$ mapping to $x$. We then have
$$\dim_\eta \cX := \dim_{\xi} X- \dim_{\xi} P= \dim Z - \dim_Z P
= \dim_z X-\dim_z P =: \dim_x \cX,$$
where the identity $\dim Z = \dim_z X$ follows from the fact that any
irreducible component of $X$ meeting $Z$ has the same dimension as $Z$.

We thus have that $\dim_{\eta} \cX=\dim Z - \dim_Z P$, and further
that $\dim_{\eta} \cX=\dim_x X$ for any $x \in P(Z)$. However, since
$Z$ was an arbitrary component of $X$, we conclude that 
$\dim_{\eta} \cX = \dim_x X$ for every $x \in X$, and thus we obtain
the desired statement on $\dim X$.
\end{proof}

\begin{prop}\label{prop:codim-defined} Suppose $\cX$ is an irreducible
algebraic stack, of finite type over a field, and $\cZ \subseteq \cX$ is
an irreducible closed substack. Then for any smooth
cover $P:X \to \cX$ and any irreducible components $X'$ of $X$ and $Z$ of 
$P^{-1} \cZ$ with $Z \subseteq X'$, we have
$$\codim_{\cX} \cZ = \codim_{X'} Z = \dim \cX-\dim \cZ.$$
\end{prop}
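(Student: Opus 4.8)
The plan is to reduce everything to Proposition \ref{prop:dim-defined}. First I would note that $P^{-1}\cZ = \cZ\times_\cX X \to \cZ$ is again a smooth cover: it is the base change of the smooth surjection $P$ along the closed immersion $\cZ\hookrightarrow\cX$, so it is smooth and surjective, and $P^{-1}\cZ$ is a locally Noetherian scheme, being a closed subscheme of $X$. Since $\cZ$ is irreducible and of finite type over a field, Proposition \ref{prop:dim-defined} applies both to $\cX$ with the cover $X$ and to $\cZ$ with the cover $P^{-1}\cZ$, giving
$$\dim\cX = \dim X' - \dim_{X'} P, \qquad \dim\cZ = \dim Z - \dim_Z(P|_{P^{-1}\cZ})$$
for the chosen components $X'$ of $X$ and $Z$ of $P^{-1}\cZ$ (the proposition holds for every component, hence in particular for these).

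The next step would be to identify the two relative dimensions appearing above. Let $\zeta$ be the generic point of $Z$ and $\xi = P(\zeta)\in\cZ$ its image. Since the relative dimension of a smooth morphism is locally constant, it is constant on the connected (indeed irreducible) scheme $X'$, so $\dim_{X'} P = \dim_\zeta P$; likewise $\dim_Z(P|_{P^{-1}\cZ}) = \dim_\zeta(P|_{P^{-1}\cZ})$. On the other hand, the fibre of $P|_{P^{-1}\cZ}\colon P^{-1}\cZ\to\cZ$ over $\xi$ is canonically identified with the fibre of $P$ over $\xi$ (both equal $X\times_\cX\Spec\kappa(\xi)$), so these two relative dimensions at $\zeta$ coincide. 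Combining, $\dim_Z(P|_{P^{-1}\cZ}) = \dim_{X'} P$, and subtracting the two displayed formulas yields $\dim\cX - \dim\cZ = \dim X' - \dim Z$.

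Finally I would pass to codimensions. As $X'$ is an integral scheme of finite type over a field and $Z$ an irreducible closed subset, the standard identity $\dim Z + \codim_{X'} Z = \dim X'$ holds, so $\codim_{X'} Z = \dim X' - \dim Z = \dim\cX - \dim\cZ$; in particular this common value does not depend on the chosen admissible pair $(X',Z)$. Since $\codim_\cX\cZ$ is by definition $\codim_X P^{-1}\cZ$, which under the codimension convention recalled in Section \ref{sec:prelims} is the minimum over components $Z$ of $P^{-1}\cZ$ of the maximum over components $X'$ of $X$ containing $Z$ of $\codim_{X'} Z$, and every one of these codimensions equals $\dim\cX-\dim\cZ$, we obtain $\codim_\cX\cZ = \dim\cX-\dim\cZ = \codim_{X'} Z$, which is the assertion. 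The only point requiring genuine care is the comparison of relative dimensions in the second step (local constancy on the connected set $X'$, together with invariance of the fibre over $\xi$ under the base change $\cZ\times_\cX X$); the rest is bookkeeping on top of Proposition \ref{prop:dim-defined}.
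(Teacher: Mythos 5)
Your proposal is correct and follows essentially the same route as the paper's proof: apply Proposition \ref{prop:dim-defined} to $\cX$ with cover $X$ and to $\cZ$ with cover $P^{-1}\cZ$, observe that the two relative dimensions agree because $Z \subseteq X'$, subtract, and convert to codimension via $\codim_{X'}Z = \dim X' - \dim Z$ for irreducible schemes of finite type over a field. The only difference is cosmetic: you spell out explicitly (via the generic point $\zeta$ of $Z$ and the identification of the fibre of $P|_{P^{-1}\cZ}$ with that of $P$ over $\xi$) why $\dim_{X'}P = \dim_Z(P|_{P^{-1}\cZ})$, a step the paper compresses into the single remark ``$\dim_{X'}P = \dim_Z P$ because $Z\subseteq X'$.''
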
 

\begin{proof} By definition, we have $\codim_{\cX} \cZ=\codim_X P^{-1} \cZ$,
so the first identity amounts to showing that $\codim_{X'} Z$ is independent
of the choices of $X'$ and $Z$, which in turn is a consequence of the second
identity. Since $X'$ and $Z$ are irreducible and
$X$ is locally of finite type over a field, we have
$\codim_{X'} Z = \dim X' - \dim Z$. But according to Proposition 
\ref{prop:dim-defined}, we have 
$$\dim X' - \dim Z=\dim \cX+\dim_{X'} P -\dim \cZ - \dim_{Z} P=
\dim \cX-\dim \cZ,$$
where $\dim_{X'} P =\dim_Z P$ because $Z \subseteq X'$. 
We thus conclude the desired statement.
\end{proof}

\begin{cor}\label{cor:codim-equiv} Suppose $\cX$ is an 
algebraic stack of finite type over a field, and $\cZ \subseteq \cX$ is
a closed substack. Then 
$$\codim_{\cX} \cZ = \min_{\cZ' \subseteq \cZ} 
\left(\max_{\cX' \subseteq \cX: \cZ' \subseteq \cX'}
\codim_{\cX'} \cZ'\right),$$
where $\cZ'$ and $\cX'$ range over irreducible components of $\cZ$ and $\cX$
respectively.
\end{cor}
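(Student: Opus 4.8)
The plan is to reduce to Proposition \ref{prop:codim-defined} by matching up irreducible components upstairs and downstairs along a smooth cover. Fix a smooth cover $P : X \to \cX$ with $X$ of finite type over the base field, so that $X$ is Noetherian with finitely many irreducible components, and recall that $P^{-1}\cZ := X \times_{\cX} \cZ$ is a closed subscheme of $X$ because $\cZ \hookrightarrow \cX$ is a closed immersion. By definition $\codim_{\cX}\cZ = \codim_X P^{-1}\cZ$, and unwinding the codimension convention for schemes from Section \ref{sec:prelims},
\[
\codim_X P^{-1}\cZ = \min_{Z'} \ \max_{X' \supseteq Z'} \codim_{X'} Z',
\]
where $Z'$ ranges over irreducible components of $P^{-1}\cZ$ and $X'$ over those of $X$ containing $Z'$. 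It then remains to rewrite the right-hand side in terms of the components of $\cZ$ and $\cX$.

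The combinatorial backbone is the following dictionary, which I would establish from Lemma \ref{lem:irred-cover} together with the fact (used already in its proof, via Proposition 5.6 and Corollary 5.7.1 of \cite{l-m-b}) that generizations lift along the flat morphism $P$: every irreducible component of $X$ dominates a unique irreducible component of $\cX$, and for each component $\cX'$ of $\cX$ the components of $X$ dominating $\cX'$ are precisely the components of $P^{-1}\cX'$; the identical statement holds with $(\cZ, P^{-1}\cZ)$ in place of $(\cX, X)$, since $P^{-1}\cZ \to \cZ$ is again a smooth cover by a scheme. Pushing generic points forward through $P$ then shows that, for a component $Z'$ of $P^{-1}\cZ$ dominating the component $\cZ'$ of $\cZ$, a component $X'$ of $X$ contains $Z'$ if and only if $X'$ dominates some component $\cX'$ of $\cX$ with $\cZ' \subseteq \cX'$ and $Z' \subseteq X'$; and conversely every $\cX' \supseteq \cZ'$ is dominated by such an $X'$, obtained as a component of $P^{-1}\cX'$ containing the irreducible set $Z'$.

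With the dictionary in hand, Proposition \ref{prop:codim-defined} applied to $\cX'$, $\cZ'$, the smooth cover $P^{-1}\cX' \to \cX'$, and the components $X' \supseteq Z'$ gives $\codim_{X'}Z' = \dim\cX' - \dim\cZ' = \codim_{\cX'}\cZ'$. Hence, for fixed $Z'$ over $\cZ'$, the set $\{\codim_{X'}Z' : X' \supseteq Z'\}$ equals $\{\codim_{\cX'}\cZ' : \cX' \supseteq \cZ'\}$, so the two inner maxima coincide; and since this common maximum depends only on $\cZ'$, while every component of $\cZ$ is dominated by at least one component $Z'$ of $P^{-1}\cZ$ (as $P$ is surjective), the outer minimum over $Z'$ agrees with the minimum over $\cZ'$. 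Combining these identities yields $\codim_{\cX}\cZ = \min_{\cZ'}\max_{\cX' \supseteq \cZ'}\codim_{\cX'}\cZ'$, as claimed.

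The main obstacle, such as it is, is purely the component bookkeeping of the second paragraph: checking that $P^{-1}\cZ$ is genuinely a scheme, that irreducible components match under $P$ both for $\cX$ and for $\cZ$ using generization-lifting, and that the containment $Z' \subseteq X'$ faithfully reflects $\cZ' \subseteq \cX'$ at the level of stack components. There is no numerical content beyond Proposition \ref{prop:codim-defined}, and finiteness of everything in sight — finitely many components, finite dimensions — is guaranteed by the hypothesis that $\cX$ is of finite type over a field.
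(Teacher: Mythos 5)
Your proof is correct and follows essentially the same route as the paper: both sides are unwound via a smooth cover and the identity is reduced to Proposition \ref{prop:codim-defined}, which makes the codimension independent of all component choices. You simply make explicit, via Lemma \ref{lem:irred-cover} and generization-lifting, the component bookkeeping between $X$, $P^{-1}\cZ$ and $\cX$, $\cZ$ that the paper's proof leaves implicit.
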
 

\begin{proof} Both sides can be defined in terms of a smooth cover.
The only obstruction to the desired identity is that \textit{a priori} on 
the righthand side we have a minimum over a maximum over a minimum over a 
maximum, and we need to know that this may be rewritten as a single minimum 
over a maximum. But according to Proposition \ref{prop:codim-defined}, the 
codimension value is independent of the choices of components under the 
smooth cover, so we obtain the desired statement.
\end{proof}

\begin{cor}\label{cor:smooth-dim} Suppose that $B$ is an equidimensional 
scheme of finite type over a field, and $\cX$ is an algebraic stack smooth of
relative dimension $n$ over $B$. Then every irreducible component of $\cX$
has dimension $\dim B+n$.
\end{cor}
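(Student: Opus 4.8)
The plan is to reduce the statement to Proposition \ref{prop:dim-defined} by comparing, on a single smooth cover, the relative dimension over $\cX$ with the relative dimension over $B$. Note that one cannot simply treat the components of $\cX$ separately, since a component of a stack smooth over $B$ need not itself be smooth over $B$; the cover is what lets us separate components.

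First I would choose a smooth cover $P \colon X \to \cX$ by a scheme $X$, which I may take to be of finite type over the ground field $k$ because $\cX$ is of finite type over $k$ and hence quasi-compact. The composite $f \colon X \xrightarrow{P} \cX \to B$ is then a smooth morphism of schemes, so in particular flat and locally of finite type. Since $B$ is equidimensional and of finite type over $k$, the dimension formula for flat morphisms (EGA IV, 6.1.2) applied at the generic point of an irreducible component $Z$ of $X$ gives $\dim Z = \dim_b B + \dim_Z f = \dim B + \dim_Z f$, where $b$ is the image of the generic point of $Z$ and $\dim_Z f$ denotes the (locally constant) relative dimension of $f$ along $Z$, which coincides with the fiber dimension by smoothness. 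Additivity of relative dimension along the composite $X \to \cX \to B$ gives $\dim_Z f = \dim_Z P + n$, whence
$$\dim Z - \dim_Z P = \dim B + n \qquad \text{for every irreducible component } Z \text{ of } X.$$

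Next I would fix an irreducible component $\cX_i$ of $\cX$, equipped with its reduced induced structure, and pull $P$ back along the closed immersion $\cX_i \hookrightarrow \cX$ to obtain a smooth cover $P_i \colon Y := \cX_i \times_{\cX} X \to \cX_i$, where $Y = P^{-1}(\cX_i)$ is a closed subscheme of $X$, again of finite type over $k$. By Lemma \ref{lem:irred-cover} every irreducible component $Z$ of $Y$ is an irreducible component of $X$, and since relative dimension is invariant under base change we have $\dim_Z P_i = \dim_Z P$ for such a $Z$. Applying Proposition \ref{prop:dim-defined} to the irreducible stack $\cX_i$ with the cover $P_i$ and the component $Z$ then yields
$$\dim \cX_i = \dim Z - \dim_Z P_i = \dim Z - \dim_Z P = \dim B + n.$$
As $\cX_i$ was an arbitrary irreducible component of $\cX$, this gives the claim. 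The only point that needs any care is the scheme-level input of the first step — combining the flat/smooth dimension formula with equidimensionality of $B$ and additivity of relative dimension along $X \to \cX \to B$, and checking this relative dimension is unchanged by base change to $\cX_i$ — but this is routine, and I do not anticipate a genuine obstacle.
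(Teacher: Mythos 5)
Your proof is correct, but it takes a somewhat different route from the paper's. The paper argues pointwise on a single smooth cover $P:X\to\cX$: for $x\in\cX$ and a lift $\tilde x\in X$ at which $P$ has relative dimension $m$, it combines the identity $\dim_x\cX=\dim_{\tilde x}X-m$ with smoothness of the composite $X\to B$ (of relative dimension $m+n$) over the equidimensional finite-type base $B$ to get $\dim_{\tilde x}X=\dim B+m+n$, hence $\dim_x\cX=\dim B+n$ for every $x$, which handles all components simultaneously. You instead argue component by component: you first establish the scheme-level identity $\dim Z-\dim_Z P=\dim B+n$ for every irreducible component $Z$ of $X$ (flat/smooth dimension formula at generic points plus additivity of relative dimension along $X\to\cX\to B$), then base-change the cover to each irreducible component $\cX_i$, use Lemma \ref{lem:irred-cover} to identify components of $P^{-1}(\cX_i)$ with components of $X$, and conclude via Proposition \ref{prop:dim-defined}. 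The underlying scheme-theoretic input is the same; what your route buys is that it only invokes the previously established stack-level statements (Proposition \ref{prop:dim-defined}, Lemma \ref{lem:irred-cover}, and invariance of relative dimension under base change) rather than working directly with the local dimension $\dim_x\cX$ of the stack at a point, at the cost of being somewhat longer. One small caution: your claim that $\cX$ is of finite type, hence quasi-compact, over the ground field is not a consequence of the stated hypotheses (smoothness over $B$ does not include quasi-compactness); this is harmless because the appendix specializes to stacks of finite type over a field and the dimension formulas only need local finite typeness, but it is an implicit assumption rather than something you may deduce.
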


\begin{proof} Let $P:X \to \cX$ be a smooth presentation, and $x \in \cX$.
Choose any $\tilde{x} \in X$ lying over $x$, and let $m$ be the relative
dimension of $P$ at $\tilde{x}$. Then we have by definition
$\dim_x \cX=\dim_{\tilde{x}} X - m$,
and since $X$ is smooth over $B$ at $\tilde{x}$ of relative dimension $m+n$,
we have $\dim_{\tilde{x}} X=\dim B +m+n$, so we conclude that
$\dim_x \cX=\dim B +n$. Since $x$ was arbitrary, we have that every
component of $\cX$ has the desired dimension.
\end{proof}

Putting together Proposition \ref{prop:codim-defined}, Corollary
\ref{cor:codim-equiv}, and Corollary \ref{cor:smooth-dim}, we finally
conclude:

\begin{cor}\label{cor:smooth-codim} Suppose that $B$ is an equidimensional 
scheme of finite type over a field, $\cX$ is an algebraic stack smooth of
relative dimension $n$ over $B$, and $\cZ \subseteq \cX$ is an
irreducible closed substack, of codimension $c$. Then 
$$\dim \cZ = \dim B+ n-c.$$
\end{cor}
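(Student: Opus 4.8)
The plan is to combine the three cited results, reducing to an irreducible component of $\cX$ through $\cZ$ and chasing the bookkeeping through a smooth cover. First I would pick an irreducible component $\cX'$ of $\cX$ containing $\cZ$ (this exists since $\cZ$ is irreducible and closed, hence supported in some component), so that by Corollary \ref{cor:smooth-dim} we have $\dim \cX' = \dim B + n$. Next I would invoke Proposition \ref{prop:codim-defined} applied to the pair $\cZ \subseteq \cX'$: for a suitable smooth cover this gives $\codim_{\cX'} \cZ = \dim \cX' - \dim \cZ$. The only subtlety is matching this against the hypothesis, which speaks of the codimension $c$ of $\cZ$ in $\cX$ rather than in the single component $\cX'$.

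To bridge that gap I would apply Corollary \ref{cor:codim-equiv}: since $\cZ$ is irreducible, the outer minimum over components $\cZ'$ of $\cZ$ is vacuous, so $c = \codim_{\cX} \cZ = \max_{\cX' : \cZ \subseteq \cX'} \codim_{\cX'} \cZ$. But by Proposition \ref{prop:codim-defined} the value $\codim_{\cX'} \cZ$ is in fact independent of which component $\cX'$ (containing $\cZ$) one chooses, so the maximum is attained by every such $\cX'$ and equals $\codim_{\cX'} \cZ$ for the component fixed above. Hence $c = \dim \cX' - \dim \cZ = \dim B + n - \dim \cZ$, and rearranging gives $\dim \cZ = \dim B + n - c$, as desired.

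I do not expect any genuine obstacle here; the statement is purely a matter of assembling the appendix's earlier lemmas. The one point that needs a moment's care is the observation that, because $\cX$ need not be equidimensional as a stack, one must argue via a \emph{single} component $\cX'$ through $\cZ$ and use the independence-of-component clause in Proposition \ref{prop:codim-defined} to legitimately replace the ``$\max$'' in the codimension formula by that component's codimension. Everything else is immediate substitution.
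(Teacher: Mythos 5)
Your proposal is correct and follows essentially the same route as the paper, which proves the corollary simply by ``putting together'' Proposition \ref{prop:codim-defined}, Corollary \ref{cor:codim-equiv}, and Corollary \ref{cor:smooth-dim}; your write-up just makes the bookkeeping explicit. The one remark is that the independence of $\codim_{\cX'}\cZ$ from the choice of component $\cX'$ containing $\cZ$ comes from combining Proposition \ref{prop:codim-defined} with the equidimensionality given by Corollary \ref{cor:smooth-dim} (all components have dimension $\dim B+n$), rather than from Proposition \ref{prop:codim-defined} alone, but you effectively use exactly this, so there is no gap.
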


\bibliographystyle{hamsplain}
\bibliography{hgen}

\end{document}